\pgfplotsset{compat=1.14}
\newcommand{\red}[1]{\textcolor{red}{#1}}
\newcommand{\email}[1]{{\tt #1}}
\newcommand{\R}{\mathbb{R}}
\newcommand{\norm}[1]{\|#1\|}
\newcommand{\bnorm}[1]{\big\|#1\big\|}
\newcommand{\dist}[1]{{\rm dist}(#1)}
\newcommand{\sign}{{\rm sgn}}
\newcommand{\mv}{\,\big |\,}
\newcommand{\B}{{\cal B}}
\newcommand{\I}{{\cal I}}
\newcommand{\Sp}{{\mathcal S}}
\newcommand{\Z}{{\cal Z}}
\newcommand{\setto}[1]{\mathop{\rightarrow}\limits^#1}
\newcommand{\longsetto}[1]{{\mathop{\longrightarrow}\limits^{#1}}}
\newcommand{\skalp}[1]{\langle #1\rangle}
\newcommand{\xb}{\bar x}
\newcommand{\yb}{\bar y}
\newcommand{\zb}{\bar z}
\newcommand{\AT}[2]{{\textstyle{#1\atop#2}}}
\newcommand{\oo}{o}
\newcommand{\OO}{{\cal O}}
\newcommand{\argmin}{\mathop{\rm arg\,min}}
\newcommand{\cl}{\mathrm{cl}\,}
\newcommand{\co}{\mathrm{conv}\,}
\newcommand{\gph}{\mathrm{gph}\,}
\newcommand{\dom}{\mathrm{dom}\,}
\newcommand{\clm}{\mathrm{clm}\,}
\newcommand{\tto}{\rightrightarrows}
\newcommand{\Limsup}{\mathop{{\rm Lim}\,{\rm sup}}}
\newcommand{\Liminf}{\mathop{{\rm Lim}\,{\rm inf}}}
\newcommand{\myvec}[1]{\begin{pmatrix}#1\end{pmatrix}}
\newcommand{\SCD}{SCD\ }
\newcommand{\scdreg}{\mathrm{scd\,reg}\,}
\newcommand{\reg}{\mathrm{reg}\,}
\newcommand{\bnd}{\mathrm{bnd}\,}
\newcommand{\ssstar}{semismooth$^{*}$ }
\newcommand{\ee}[2]{{#1}^{(#2)}}
\newcommand\Uad{U_{\rm ad}}
\newcommand{\oR}{(-\infty,\infty]}
\newcommand{\onabla}{\overline\nabla}
\newcommand{\todo}[1]{\red{\bf ToDO: #1}}
\newcommand{\rge}{{\rm rge\;}}
\newcommand{\cocl}{{\rm cocl}\,}
\newcommand{\lip}{{\rm lip\,}}
\newlength{\myAlgBox}
\newtheorem{theorem}{Theorem}[section]
\newtheorem{proposition}[theorem]{Proposition}
\newtheorem{remark}[theorem]{Remark}
\newtheorem{lemma}[theorem]{Lemma}
\newtheorem{corollary}[theorem]{Corollary}
\newtheorem{definition}[theorem]{Definition}
\newtheorem{example}[theorem]{Example}
\title{On the role of semismoothness in nonsmooth numerical analysis: Theory
\thanks{submitted to the editors DATE
{\bf Funding: }{The work of the second author was supported by the Grant Agency of the Czech Republic, Project 21-06569K.}
}}
\author{Helmut Gfrerer\thanks{Johann Radon Institute for Computational and Applied Mathematics (RICAM), A-4040 Linz, Austria and Institute of Information Theory and Automation, Czech Academy of
Sciences, 18208 Prague, Czech Republic;
\email{helmut.gfrerer@ricam.oeaw.ac.at}
}
\and   Ji\v{r}\'{i} V. Outrata\thanks{Institute of Information Theory and Automation, Czech Academy of
Sciences, 18208 Prague, Czech Republic;
\email{outrata@utia.cas.cz}
}}
\date{}
\begin{document}
\maketitle
\if{
\begin{abstract}For the numerical solution of nonsmooth problems, sometimes it is not necessary that an exact subgradient/generalized Jacobian is at our disposal, but that a certain semismoothness property is fulfilled. In this paper we consider not only   semismoothness of nonsmooth real- and  vector-valued mappings, but also its interplay with  the semismoothness$^*$ property for multifunctions. In particular, we are interested in the semismoothness of solution maps to parametric semismooth$^*$ inclusions. Our results are expressed in terms of suitable generalized derivatives of the set-valued part, i.e., by limiting coderivatives or by SC (subspace containing) derivatives. As a byproduct we identify a class of multifunctions having the remarkable property that they are strictly proto-differentiable almost everywhere (with respect to some Hausdorff measure) on their graph.
\end{abstract}
}\fi
\begin{abstract}For the numerical solution of nonsmooth problems, sometimes it is not necessary that an exact subgradient/generalized Jacobian is at our disposal, but it suffices that a semismooth derivative, i.e., a mapping satisfying a certain  semismoothness property, is available.  In this paper we consider not only   semismooth derivatives  of single-valued mappings, but also its interplay with  the semismoothness$^*$ property for multifunctions. In particular, we are interested in semismooth derivatives of solution maps to parametric semismooth$^*$ inclusions. Our results are expressed in terms of suitable generalized derivatives of the set-valued part, i.e., by limiting coderivatives or by SC (subspace containing) derivatives. Further we show that semismooth derivatives coincide a.e. with generalized Jacobians and state some consequences concerning strict proto-differentiability for semismooth$^*$ multifunctions.
\end{abstract}
 {\bf Key words.}  Nonsmooth problems, semismoothness, solution map, generalized derivative
{\bf MSC codes.} 65K10, 65K15, 90C26.
\section{Introduction}
As stated in numerous works in the literature,  semismoothness plays an important role in solving various nonsmooth problems. Let us list three typical problem classes of this sort.

\begin{enumerate}
\item The first one provides the main motivation for this paper and concerns  optimization problems
of the form
\begin{align}\label{EqBasProbl}\min\theta(x)\quad\mbox{ subject to }\quad x\in \Uad,
\end{align}
where $\theta:\R^n\to\R$ is a locally Lipschitz function and $\Uad\subset\R^n$ is a closed set. For the numerical solution of these problems, the use of {\em bundle methods} is a popular approach. There exists a bunch of bundle methods in the literature, cf. \cite{Le, Ki, SZ}, and all these methods have in common that they require an
\begin{equation}\label{EqOracle}\begin{minipage}{10cm}\mbox{\em oracle}, which returns for arbitrary $x\in\Uad$ the function value $\theta(x)$ and one subgradient $g(x)\in\partial^c\theta(x)$,\end{minipage}\end{equation}
where $\partial^c\theta$ denotes the Clarke subdifferential. Then, under the additional assumption that
\begin{equation*}\mbox{$\theta$ is weakly semismooth (see Remark \ref{RemWeaklySS} below),}\end{equation*}
it can be shown that the bundle method either stops after finitely many steps at a C-stationary point $\xb$ or, if the produced sequence $x_k$ remains bounded, one of its accumulation points is C-stationary.

From a limited number of bundle methods, applicable to the numerical solution of \eqref{EqBasProbl} (e.g. \cite{Le, Ki, SZ}), we consider in this paper the Bundle-Trust region algorithm (BT algorithm) from Schramm and Zowe \cite{SZ}, described also in \cite{Zo89}.
It seems that Dempe and Bard \cite{DeBa01} were the first who observed that the BT algorithm  also works, when the oracle delivers only a so-called {\em pseudogradient}. Pseudogradients were introduced by Norkin \cite{No78,No80} and have the remarkable feature that they fulfill a certain semismoothness property. Although this  property does not necessarily imply that $\theta$ is weakly semismooth, one can show that a similar convergence result  holds as above: If the BT-algorithm produces a bounded sequence then at least one accumulation point is {\em pseudo-stationary}, i.e., $0$ is a pseudogradient of $\theta$ at this point. By checking the convergence proof for other bundle methods \cite{Ki, Le} one can show
that a similar assertion  holds: If the oracle delivers only a pseudogradient, one can still prove that the method works and returns pseudo-stationary points.

\item Semismoothness also plays an important role  in another field of numerical analysis, namely the solution of nonsmooth equations
\begin{equation*}
G(x)=0,
\end{equation*}
where $G:\R^n\to\R^n$ is a locally Lipschitz mapping. Whereas in the original semismooth Newton method,  introduced by Kummer \cite{Kum} and Qi and Sun \cite{QiSun93}, matrices from the Clarke generalized Jacobian were used, it was soon observed that the Clarke generalized Jacobian can be replaced by another mapping which fulfills a certain semismoothness property, closely related to the one imposed on pseudogradients. In what follows the elements of this mapping, both in the scalar- and vector-valued case, will be termed {\em semismooth derivatives}.  This is particularly important for carrying over the semismooth Newton method to the infinite-dimensional setting, where no generalized Jacobian exists. We refer the reader to \cite{Ulb11} for the semismooth Newton method in the infinite-dimensional setting. Note that, whereas the Clarke calculus often yields only inclusions, the calculus of semismooth derivatives yields again a semismooth derivative. Though the semismoothness property, imposed on the semismooth derivatives in the scalar- and vector-valued case, is essentially the same, there is a big difference: Whereas for the semismooth Newton method the semismoothness needs to hold only at the solution, when working with pseudogradients in the framework of a bundle method it needs to hold at each $x\in U_{ad}$.


\item Finally, for the numerical solution of inclusions
\begin{equation*}
  0\in F(x),
\end{equation*}
the so-called semismooth$^*$ Newton method was introduced in \cite{GfrOut21}, where $F:\R^n\tto\R^n$ is a set-valued mapping. Here, the semismooth$^*$ property was defined in terms of the  {\em limiting (Mordukhovich) coderivative} and was later simplified for the important subclass of SCD (subspace containing derivatives) mappings by using the notion of SC (subspace containing) derivatives in \cite{GfrOut22a}. The SC derivative can be considered as a generalization of the B-Jacobian (Bouligand differential) of single-valued mappings to multifunctions and acts as a kind of skeleton for the limiting coderivative. The semismooth$^*$ Newton method has been already successfully  applied to several difficult problems, see, e.g., \cite{GfrOutVal23,GfrMaOutVal23,Gfr24,GfrHubRam25}.
\end{enumerate}

We observe that semismoothness plays an important role in solving different types of nonsmooth problems: optimization problems, equations and inclusions. The respective notions of semismoothness are slightly different for the different problem types 
and it is not clear how they can combined for the numerical solution of problems assembled by different nonsmooth and set-valued objects. As an example consider an optimization problem of the form
\begin{align}
\nonumber  \min\,&\varphi(x,y)\\
\label{EqCompProbl}  \mbox{subject to }&0\in F(x,y),\\
\nonumber &x\in \Uad,
\end{align}
where $\varphi:\R^n\times\R^m\to\R$ is a locally Lipschitz function, $F:\R^n\times\R^m\tto\R^m$ is a set-valued mapping and $\Uad\subset\R^n$ is a closed set. Problems of such type arise, e.g., in bilevel programming and mathematical programs with equilibrium constraints (MPECs).

One possibility for tackling \eqref{EqCompProbl} is the so-called {\em implicit programming approach} (ImP), cf. \cite{LuPaRa97}, where one typically requires that for every $x$ the inclusion $0\in F(x,\cdot)$ has a unique solution denoted by $\sigma(x)$. Hence we can eliminate $y$ from the model above and arrive at the (reduced) problem
\begin{align}
 \label{EqReducedProbl} \min\theta(x):=\varphi(x,\sigma(x))\quad \mbox{ subject to }x\in \Uad.
\end{align}
Under the additional assumption that $\sigma$ is locally Lipschitz, the objective $\theta$ is locally Lipschitz as well and we can apply a bundle method for its numerical solution, provided a suitable oracle is available.
In most cases, however, one cannot guarantee that at all points a Clarke subgradient of $\theta$ can be computed. Nevertheless, if the original data $\varphi$ and $F$ are semismooth and semismooth$^*$, respectively, then it is possible to construct an oracle returning an element of a semismooth derivative of $\theta$. This oracle will then ensure the convergence of the used bundle method to a stationary point.

In this paper we will provide  the theoretical base for this approach. In particular, we will conduct a deep analysis of various available notions of the semismoothness property and find out some important mutual relationships. We will show  that a single-valued mapping possesses some  semismooth derivative on an open set $\Omega$ if and only if it is G-semismooth (semismooth in the sense of Gowda \cite{Gow04}) there. Further, this mapping is almost everywhere strictly differentiable  and the semismooth derivative coincides with the generalized Jacobian almost everywhere in $\Omega$. Moreover, its generalized Jacobian is contained in another semismooth derivative obtained from the original one by performing some closure operation and taking the convex hull of the values.
 For set-valued mappings we introduce a generalization of the \ssstar property (denoted  $\Gamma$-ss$^*$ property) and the so-called SCD-ss$^*$ property. These new notions enable us, among other things, to analyze the semismoothness of solution maps to inclusions appearing in \eqref{EqCompProbl} by examination of the underlying mapping $F$. Note that, up to now, only results on the semismoothness property of single-valued solution mappings given implicitly by single-valued equations are available, cf. \cite{Sun01,PaSuSu03,Gow04,MeSuZh05,Kr18}.

As already observed by Chen, Sun and Zhang in the very recent paper \cite{ChSuZh25}, there are some relationships between G-semismoothness and the \ssstar property. We extend these results by showing that a graphically Lipschitzian mapping (i.e., a multifunction whose graph coincides with the graph of a Lipschitz single-valued mapping after a change of coordinates) is SCD-ss$^*$ if and only if the transformed mapping is G-semismooth. As a byproduct we obtain that an SCD-ss$^*$ mapping is almost everywhere strictly proto-differentiable and the  SC  derivative is almost everywhere a singleton showing the easiness of its computation there.

Finally, on the basis of the above theoretical results, we will derive for problem
(\ref{EqCompProbl}) two implementable algorithmic schemes, enabling us the computation of a semismooth derivative of $\theta$ given by \eqref{EqReducedProbl} via either the limiting coderivative or the SC derivatives of $F$. The respective oracles will then, in cooperation with a suitable bundle method, generate stationary points even in some situations outside the classical area of applicability of ImP. This is illustrated by an academic example. In a subsequent paper we intend to work out this approach to several instances of (\ref{EqCompProbl}) having a practical relevance and to demonstrate its efficiency by numerical experiments.

Let us mention that for single-valued mappings there is a close relation between the semismoothness property and conservative derivatives, the latter being explored in a recent paper by Bolte and Pauwels \cite{BolPau21}. In fact, it was shown in \cite{DavDru22} that both notions are equivalent for semialgebraic maps. As semismooth derivatives also obey a chain rule like conservative Jacobians, the considerations about automatic differentiation from \cite{BolPau21} carry over to our setting.
Recently, a conservative Jacobian for solution maps given implicitly by equations was established  in \cite{BoLePauSi21}. In \cite{BoPauSi24}, a conservative Jacobian for the solution map of a parametric monotone inclusion was obtained by rewriting the inclusion as an equivalent fixed-point equation for the forward-backward map by means of the resolvent. Though the resolvents  are explicitly known for a lot of monotone mappings, it is not clear how  a conservative Jacobian of the resolvent  can be computed in the general case. In fact, the resolvent itself may be considered as the solution map of a shifted inclusion. On the contrary, our approach is based on the differentiation of the underlying set-valued mapping by SC  derivatives.

The paper is organized in the following way. Section 2 is divided into several subsections with the first two  containing the basic notions of variational analysis and generalized differentiation. In Subsection \ref{SubSecClMap} we describe a general principle, analogous to the construction of Clarke generalized Jacobian, how to generate from an arbitrary multifunction another multifunction having some favorable properties. In the next subsection we recall the basic properties of SCD mappings. In Subsection \ref{SubSecSS} we recall the semismooth$^*$ property of set-valued mappings and its extension to SCD mappings.

The main new results are presented in Sections 3--5. Section 3 is devoted to the semismoothness property of single-valued mappings with respect to certain multifunctions. This concept is commonly used in semismooth Newton methods for solving nonsmooth equations,  in particular in infinite dimensions.
However, in contrast to these applications, we do not consider the case that semismoothness only holds at the solution, but on the whole domain.
There is a close relation with the seminal work of Norkin \cite{No78,No80} on generalized differentiation of real-valued functions. In Section 4 we present important relationships between the  G-semismooth and the SCD-ss$^*$ property for graphically Lipschitzian multifunctions. Section 5 concerns the semismooth$^*$ property of solution maps to inclusions. We establish the semismooth$^*$ property of general solution maps and the semismooth property for continuous localizations and selections, respectively. Finally it will be shown that the analysis considerably simplifies for SCD mappings.

The following notation is employed. Given a linear subspace $L\subseteq \R^n$, $ L^\perp$ denotes its orthogonal complement.
Further, given a multifunction $F$, $\gph F:=\{(x,y)\mv y\in F(x)\}$ stands for its
graph and $\dom F:=\{x\mv F(x) \ne \emptyset\}$ denotes its domain. For an element $u\in\R^n$, $\norm{u}$ denotes its Euclidean norm and  $\B_\delta(u)$ denotes the closed ball around $u$ with radius $\delta$. The closed unit ball is denoted by $\B$. In a product space we use the norm $\norm{(u,v)}:=\sqrt{\norm{u}^2+\norm{v}^2}$. The space of all $m\times n$ matrices is denoted by $\R^{m\times n}$. For a matrix $A\in\R^{m\times n}$ , we employ the operator norm $\norm{A}$ with respect to the Euclidean norm and we denote the range of $A$ by $\rge A$. Given a set $\Omega\subset\R^s$, we define the distance of a point $x$ to $\Omega$ by $\dist{x,\Omega}:=\inf\{\norm{y-x}\mv y\in\Omega\}$ and the indicator function is denoted by $\delta_\Omega$.
Further we define for a mapping $F:\Omega\to\R^m$, $\Omega\subset\R^n$ open, and $x\in\Omega$ the moduli
\[\lip F(x):=\limsup_{\AT{u,v\setto{\Omega} x}{u\not=v}}\frac{\norm{F(u)-F(v)}}{\norm{u-v}},\quad \clm F(x):=\limsup_{u\setto{\Omega}x}\frac{\norm{F(u)-F(x)}}{\norm{u-x}}.\]
Clearly, $F$ is Lipschitz near $x$ (calm at $x$) if and only if $\lip F(x)$ ($\clm F(x)$) is finite.


\section{Background from variational analysis and preliminary results}

\subsection{Nonsmooth analysis}
Given a parameterized family $C_t$ of subsets of a metric space, where $t$ also belongs to a metric space, we define the upper (outer) and lower(inner) limits in the sense of Painlev\'e--Kuratowski as
\begin{align*}
\Limsup_{t\to\bar t}C_t:=\{x\mv \liminf_{t\to\bar t}\,\dist{x,C_t}=0\},\
\liminf_{t\to\bar t}C_t:=\{x\mv \limsup_{t\to\bar t}\,\dist{x,C_t}=0\}.
\end{align*}

\if{
Let us recall the notions of upper (outer) and lower (inner) limits in the sense of Painlev\'e--Kuratowski of a parameterized family $C_t$ of subsets of a metric space, where $t$ also belongs to a metric space:
\begin{align*}
\Limsup_{t\to\bar t}C_t:=\{x\mv \liminf_{t\to\bar t}\,\dist{x,C_t}=0\},\\
\Liminf_{t\to\bar t}C_t:=\{x\mv \limsup_{t\to\bar t}\,\dist{x,C_t}=0\}.
\end{align*}
It easily follows from the definition that both the upper and lower limit sets are
closed. These sets can also be described in terms of sequences as follows. The
upper limit can be defined as the set of points $x$ for which there exists
a sequence $t_k\to\bar t$ such that $x_k\to x$ for some $x_k\in C_{t_k}$ $\forall k$. Similarly, the lower
limit  can be defined as a set of points $x$ such that for every sequence
$t_k\to\bar t$ we can find $x_k\in C_{t_k}$ such that $x_k\to x$. Clearly, $\Liminf_{t\to\bar t}C_t\subset \Limsup_{t\to\bar t}C_t$.
}\fi

Consider a mapping $F:\Omega\to \R^m$, where $\Omega\subset\R^n$ is an open set. In the sequel we denote by $\OO_F$ the set of points where $F$ is (Fr\'echet) differentiable and we denote the Jacobian by $\nabla F(x)$, $x\in\OO_F$. We use this notation also for scalar-valued functions $f:\R^n\to\R$, i.e., $\nabla f(x)$ is an $1\times n$ matrix, i.e., a row vector.

  The {\em B-Jacobian (B-differential)} is defined as
\[\overline \nabla F(\xb)= \Limsup_{x\longsetto{\OO_F}\xb}\{\nabla F(x)\},\ \xb\in \Omega.\]
Then the Clarke generalized Jacobian is given by $\co \onabla F(\xb)$.

If $F$ is locally Lipschitz, by Rademacher's Theorem the set $\Omega\setminus\OO_F$  has Lebesgue measure $0$.
Since the Jacobians $\nabla F(x)$, $x\in\OO_F$, are also locally bounded
by the Lipschitz constant, we obtain $\emptyset\not=\overline \nabla F(x)\subset\co\overline \nabla F(x)$, $x\in \Omega$.
Further, we say that $F$ is {\em semismooth} at $\xb\in\Omega$ if $F$ is directionally differentiable at $\xb$ and
\begin{equation*}
  \lim_{\AT{x\setto{\Omega}\xb}{x\not=\xb}}\frac{\sup\{\norm{F(x)-F(\xb)-A(x-\xb)}\mv A\in \co\onabla F(x)\}}{\norm{x-\xb}}=0.
\end{equation*}
Note that this definition of semismoothness is different but equivalent to the original ones by Mifflin \cite{Mif77} (for the scalar case) and by Qi and Sun \cite{QiSun93}. If the requirement of directional differentiability is dropped we arrive at the semismooth property as defined by Gowda \cite{Gow04}, which is also called {\em G-semismoothness} in the literature.

\if{
\todo{shorten}
}\fi
We say that $F$ is {\em strictly differentiable} at $\xb\in \Omega$ if  $F$ is Lipschitz near $\xb$ and there is an $m\times n$ matrix $A$ such that
\[\lim_{\AT{x\to\xb}{t\downarrow 0}}\frac{F(x+tu)-F(x)}t=Au,\ u\in\R^n.\]

For a locally Lipschitz function $f:\Omega\to\R$, $\Omega\subset\R^n$ open, we can define the {\em Clarke subdifferential} as
\[\partial^c f(x) :=\co \overline \nabla f(x)^T:=\{g^T\mv g\in \co \overline \nabla f(x)\},\ x\in \Omega.\]
The elements of $\partial^c f(x)$ are called {\em subgradients} and are column vectors.

\if{
Given any extended real-valued function $f:\R^n\to\oR$, we always assume that $f$ is proper, i.e., $f(x)>-\infty$ for all $x$ and $\dom  f:=\{x\in\R^n\mv  f(x)<\infty\}\not=\emptyset$. Given a point $\xb\in \dom  f$, the {\em regular (Fr\'echet) subdifferential} of $f$ at $\xb$ is given by
\[\widehat\partial  f(\xb):=\Big\{x^*\in\R^n\mv\liminf_{x\to\xb}\frac{ f(x)- f(\xb)-\skalp{x^*,x-\xb}}{\norm{x-\xb}}\geq 0\Big\},\]
while the {\em (limiting) subdifferential} is defined by
\[\partial  f(\xb):=\{x^*\mv \exists (x_k,x_k^*)\longsetto{\gph \widehat \partial f} (\xb, x^*) \mbox{ with } f(x_k)\to f(\xb)\}.\]
If $\dom f=:\Omega$ is open, then clearly $\OO_f\subset \dom\widehat\partial f$ and $\onabla f(\xb)^T\subset\partial f(\xb)$, provided that $f$ is continuous at $\xb\in \Omega$. If $f$ is locally Lipschitz on $\Omega$ then
\[\partial^c f(x)=\co \partial f(x),\ x\in \Omega,\]
cf. \cite[Theorem 9.61]{RoWe98}.
}\fi
\begin{lemma}\label{LemStrictDiff}
  Let $\Omega\subset\R^n$ be open and let $F:\Omega\to\R^m$  be  Lipschitz near $\xb\in \Omega$. Then the following statements are equivalent:
  \begin{enumerate}
    \item[(i)] $F$ is strictly differentiable at $\xb$.
    \item[(ii)] $\onabla F(\xb)$ is a singleton.
    \item[(iii)] $\lim_{x\longsetto{\OO_F}\xb}\nabla F(x)$ exists.
  \end{enumerate}
\end{lemma}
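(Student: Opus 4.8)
The plan is to establish the cycle (i)~$\Rightarrow$~(iii)~$\Rightarrow$~(i) together with the equivalence (ii)~$\Leftrightarrow$~(iii), the latter being essentially a soft fact. Fix $L>\lip F(\xb)$ and $\delta_0>0$ so that $F$ is $L$-Lipschitz on $\B_{\delta_0}(\xb)\subseteq\Omega$. By Rademacher's theorem $\OO_F$ is dense in $\B_{\delta_0}(\xb)$ (in particular $\xb$ is an accumulation point of $\OO_F$, so statement~(iii) is meaningful) and $\norm{\nabla F(x)}\le L$ for all $x\in\OO_F\cap\B_{\delta_0}(\xb)$. Hence, as already noted in the text, $\onabla F(\xb)$ is nonempty, and it is the set of cluster points of the bounded, hence relatively compact, family $\{\nabla F(x)\mv x\in\OO_F\cap\B_{\delta_0}(\xb)\}$ along $x\longsetto{\OO_F}\xb$; being a bounded Painlev\'e--Kuratowski upper limit it is also compact. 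Since a function with relatively compact range has a limit if and only if it possesses exactly one cluster point, (ii) and (iii) are equivalent, and in that case $\lim_{x\longsetto{\OO_F}\xb}\nabla F(x)$ equals the unique element of $\onabla F(\xb)$.

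\emph{(iii)~$\Rightarrow$~(i).} Let $\onabla F(\xb)=\{A\}$ and let $\epsilon>0$. Choose $\delta\in(0,\delta_0/2)$ with $\norm{\nabla F(x)-A}\le\epsilon$ for all $x\in\OO_F\cap\B_{2\delta}(\xb)$. For $z\in\B_\delta(\xb)$, any element of $\onabla F(z)$ is a limit of matrices $\nabla F(x_k)$ with $x_k\to z$, $x_k\in\OO_F$, hence eventually $x_k\in\B_{2\delta}(\xb)$; thus $\onabla F(z)\subseteq\{J\mv\norm{J-A}\le\epsilon\}$, and by convexity of this ball also $\co\onabla F(z)\subseteq\{J\mv\norm{J-A}\le\epsilon\}$. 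Now fix $x,x'\in\B_\delta(\xb)$ and $\xi\in\R^m$ with $\norm\xi=1$. Applying Lebourg's mean value theorem to the scalar Lipschitz function $t\mapsto\skalp{\xi,F(x+t(x'-x))}$ on $[0,1]$, together with the Clarke chain rule, yields a point $z$ on the segment $[x,x']$ and a matrix $J\in\co\onabla F(z)$ with $\skalp{\xi,F(x')-F(x)}=\skalp{\xi,J(x'-x)}$, whence
\begin{equation*}
\skalp{\xi,F(x')-F(x)-A(x'-x)}=\skalp{\xi,(J-A)(x'-x)}\le\norm{J-A}\,\norm{x'-x}\le\epsilon\,\norm{x'-x}.
\end{equation*}
Taking the supremum over $\norm\xi=1$ gives $\norm{F(x')-F(x)-A(x'-x)}\le\epsilon\,\norm{x'-x}$ for all $x,x'\in\B_\delta(\xb)$. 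Specializing to $x'=x+tu$ with $x\in\B_{\delta/2}(\xb)$, $\norm u\le1$ and $0<t<\delta/2$ shows $\lim_{\AT{x\to\xb}{t\downarrow0}}\frac{F(x+tu)-F(x)}t=Au$ for every $u$, i.e.\ $F$ is strictly differentiable at $\xb$.

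\emph{(i)~$\Rightarrow$~(iii).} Let $A$ be the matrix from the definition of strict differentiability. For $x\in\B_{\delta_0/2}(\xb)$ and $0<t\le\delta_0/2$ the difference quotients $u\mapsto\frac{F(x+tu)-F(x)}t$ are $L$-Lipschitz on the unit ball $\B$ and converge pointwise on $\B$ to the $L$-Lipschitz map $u\mapsto Au$ as $(x,t)\to(\xb,0)$; an elementary equicontinuity argument on the compact set $\B$ upgrades this to uniform convergence, that is, $\norm{F(x')-F(x)-A(x'-x)}=\oo(\norm{x'-x})$ as $x,x'\to\xb$. Taking $x\in\OO_F\cap\B_{\delta_0/2}(\xb)$, applying this estimate to $x'=x+tv$, dividing by $t$ and letting $t\downarrow0$, we get $\norm{\nabla F(x)v-Av}\le\rho(x)\norm v$ for all $v$, with $\rho(x)\to0$ as $x\longsetto{\OO_F}\xb$; letting $v$ run through a basis of $\R^n$ yields $\nabla F(x)\to A$, which is (iii).

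The only step that needs genuine care is (iii)~$\Rightarrow$~(i): converting the a.e.\ smallness of $\nabla F-A$ near $\xb$ into the pointwise bound $\norm{F(x')-F(x)-A(x'-x)}\le\epsilon\,\norm{x'-x}$. A direct computation is not available, since $F$ need not be differentiable along the segment $[x,x']$, so one is forced to route the argument through the Clarke generalized Jacobian $\co\onabla F$ and a scalarized mean value theorem. The remaining parts are soft: (ii)~$\Leftrightarrow$~(iii) is the limit-versus-cluster-point dichotomy for relatively compact ranges, and (i)~$\Rightarrow$~(iii) merely requires the standard upgrade of the directional definition of strict differentiability to its uniform form.
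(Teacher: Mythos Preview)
Your proof is correct, but it follows a different route from the paper's. The paper argues (i)~$\Rightarrow$~(iii) by citing \cite[Exercise 9.64]{RoWe98} (strict differentiability is equivalent to continuity of $\nabla F$ relative to $\OO_F$), notes that (iii)~$\Rightarrow$~(ii) is evident, and proves (ii)~$\Rightarrow$~(i) componentwise: by \cite[Theorem 2.6.6]{Cla83} each $\onabla F_i(\xb)$ is a singleton, hence $\partial^cF_i(\xb)$ is a singleton, and then \cite[Proposition 2.2.4]{Cla83} gives strict differentiability of $F_i$. Your approach is more self-contained: you settle (ii)~$\Leftrightarrow$~(iii) via the cluster-point/limit dichotomy for bounded families, prove (iii)~$\Rightarrow$~(i) directly by combining Lebourg's mean value theorem with the Clarke chain rule to obtain the uniform estimate $\norm{F(x')-F(x)-A(x'-x)}\le\epsilon\norm{x'-x}$, and deduce (i)~$\Rightarrow$~(iii) through an equicontinuity upgrade on the unit ball. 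The paper's version is shorter because it outsources the analytic work to standard references; yours makes the mechanism explicit and avoids the component-by-component reduction, at the price of invoking Lebourg and the chain rule by hand.
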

\begin{proof}
  ad (i) $\Rightarrow$ (iii): If $F$ is strictly differentiable at $\xb$ then $\nabla F$ is continuous at $\xb$ relative to $\OO_F$, cf. \cite[Exercise 9.64]{RoWe98}. Thus $\lim_{x\longsetto{\OO_F}\xb}\nabla F(x)$ exists (and equals to $\nabla F(\xb)$).\par
  ad (iii) $\Rightarrow$ (ii): This is evident from the definitions.\par
  ad (ii) $\Rightarrow$ (i): Let $\onabla F(\xb)=\{A\}$. For every $i=1,\ldots,m$, we conclude from \cite[Theorem 2.6.6]{Cla83} that the $i$-th component of $F$, $i=1,\ldots m$, satisfies $\bar \nabla F_i(\xb)=\{A_i\}$, where $A_i$ denotes the $i$-th row of $A$. Thus $\partial^cF_i(\xb)$ is a singleton and therefore $F_i$ is strictly differentiable at $\xb$ by \cite[Proposition 2.2.4]{Cla83}. Since every component $F_i$, $i=1,\ldots m$, is strictly differentiable at $\xb$, so is $F$ as well.
\end{proof}

\subsection{Variational geometry and differentiation of multifunctions}
 \begin{definition}
 Let $\Omega\subset\R^{s}$ and $\bar{x} \in \Omega$.
\begin{enumerate}
 \item [(i)]The  {\em tangent (contingent, Bouligand) cone} $T_{\Omega}(\bar{x})$, the {\em paratingent cone} $T^P_\Omega(\xb)$ and the {\em regular (Clarke) tangent cone} $\widehat T_\Omega(\xb)$ to $\Omega$ at $\bar{x}$ are given by
 \[T_{\Omega}(\bar{x}):=\Limsup\limits_{t\downarrow 0} \frac{\Omega-\bar{x}}{t},\quad
 T^P_\Omega(\xb):=\Limsup\limits_{\AT{x\setto{{\Omega}}\xb}{t\downarrow 0}} \frac{\Omega- x}{t},\quad
 \widehat T_\Omega(\xb):=\Liminf_{\AT{x\setto{\Omega}\xb}{t\downarrow 0}}\frac{\Omega-x}t.\]
 \if{
 \item [(i)]The  {\em tangent (contingent, Bouligand) cone} to $\Omega$ at $\bar{x}$ is given by
 \[T_{\Omega}(\bar{x}):=\Limsup\limits_{t\downarrow 0} \frac{\Omega-\bar{x}}{t}\]
   and the {\em paratingent cone} to $\Omega$ at $\xb$ is given by
 \[T^P_\Omega(\xb):=\Limsup\limits_{\AT{x\setto{{\Omega}}\xb}{t\downarrow 0}} \frac{\Omega- x}{t}.\]
 The {\em regular (Clarke) tangent cone} to $\Omega$ at $\xb$ amounts to
 \[\widehat T_\Omega(\xb):=\Liminf_{\AT{x\setto{\Omega}\xb}{t\downarrow 0}}\frac{\Omega-x}t.\]
 }\fi
 \item[(ii)] The set $\widehat{N}_{\Omega}(\bar{x}):=(T_{\Omega}(\bar{x}))^{\circ}$,
 i.e. the (negative) polar cone to the tangent cone $T_\Omega(\xb)$, is the {\em regular (Fr\'{e}chet) normal cone} to $\Omega$ at $\bar{x}$, and
 \[N_{\Omega}(\bar{x}):=\Limsup\limits_{x\setto{\Omega}\xb} \widehat{N}_{\Omega}(x)\]
 is the {\em limiting (Mordukhovich) normal cone} to $\Omega$ at $\bar{x}$.
 \end{enumerate}
\end{definition}
If $\Omega$ is convex, then $\widehat{N}_{\Omega}(\bar{x})= N_{\Omega}(\bar{x})$ amounts to the classical normal cone in the sense of convex analysis and we will  write $N_{\Omega}(\bar{x})$.

The above listed cones enable us to describe the local behavior of set-valued maps via various
generalized derivatives.

\begin{definition}
Consider a  multifunction $F:\R^n\tto\R^m$ and let $(\xb,\yb)\in \gph F$.
\begin{enumerate}
\item[(i)]
 The multifunction $DF(\xb,\yb):\R^n\tto\R^m$ given by $\gph DF(\xb,\yb)=T_{\gph F}(\xb,\yb)$ is called the {\em graphical derivative} of $F$ at $(\xb,\yb)$.
\item[(ii)]
 The multifunction $D_*F(\xb,\yb):\R^n\tto\R^m$ given by $\gph D_*F(\xb,\yb)=T^P_{\gph F}(\xb,\yb)$ is called the {\em  strict (paratingent) derivative} of $F$ at $(\xb,\yb)$.
\item[(iii)]
 The multifunction $\widehat D^\ast F(\xb,\yb ): \R^m\tto\R^n$  defined by
 \[ \gph \widehat D^\ast F(\xb,\yb )=\{(y^*,x^*)\mv (x^*,-y^*)\in \widehat N_{\gph F}(\xb,\yb)\}\]
is called the {\em regular (Fr\'echet) coderivative} of $F$ at $(\xb,\yb )$.
\item [(iv)]  The multifunction $D^\ast F(\xb,\yb ): \R^m \tto \R^n$,  defined by
 \[ \gph D^\ast F(\xb,\yb )=\{(y^*,x^*)\mv (x^*,-y^*)\in N_{\gph F}(\xb,\yb)\}\]
is called the {\em limiting (Mordukhovich) coderivative} of $F$ at $(\xb,\yb )$.
\end{enumerate}
\end{definition}

 We call $F$ {\em single-valued}, if for every $x\in \dom F$ the set $F(x)$ is a singleton. In this case, we can omit the second argument and write $DF(x)$, $\widehat D^*F(x),\ldots$ instead of $DF(x,F(x))$, $\widehat D^*F(x,F(x)),\ldots$.
\if{Given a mapping $F:\Omega\subset\R^n\to\R^m$, we can identify it with a set-valued single-valued mapping $\R^n\tto\R^m$ via
\[x\to\begin{cases}\{F(x)\}&\mbox{if $x\in \Omega$}\\\emptyset&\mbox{else.}\end{cases}\]
}\fi
\if{
However, be aware that when considering limiting objects at $x$ where $F$ is not continuous, it is not enough to consider only sequences $x_k\to x$ but we must work with sequences $(x_k,F(x_k))\to(x,F(x))$.
}\fi

If a mapping $F:\Omega\to\R^m$, $\Omega\subset\R^n$ open, is differentiable at $\xb\in \Omega$ then
$DF(\xb)(u)=\nabla F(\xb)u,$ $u\in\R^n.$ If $F$ is even strictly differentiable at $\xb$, then also
\begin{equation}\label{EqStrictDeriv_StrictDiff}D_*F(\xb)(u)=\nabla F(\xb)u,\ u\in\R^n,\quad D^*F(\xb)(v^*)=\nabla F(\xb)^Tv^*,\ v^*\in\R^m\end{equation}
holds, cf. \cite{RoWe98}. A generalization of strict differentiability  to general set-valued mappings is provided by the notion of strict proto-differentiability.
\if{
Following \cite{Ro89}, a set-valued mapping $F:\R^n\tto\R^m$ is called {\em proto-differentiable} at $(\xb,\yb)\in\gph F$ if the set-valued mappings
\begin{equation*} \Delta_{\xb,\yb,t}F:\xi\mapsto \big(F(\xb+t\xi)-\yb)/t\quad \mbox{ for $t> 0$}\end{equation*}
graph-converge as $t \downarrow 0$, i.e., their graphs converge as subsets of $\R^n\times\R^m$. Of course, the limit must be equal to the graphical derivative $DF(\xb,\yb)$.
This is the same as requiring that
\[\Limsup_{t\downarrow 0}\frac{\gph F-(\xb,\yb)}t= \Liminf_{t\downarrow 0}\frac{\gph F-(\xb,\yb)}t\big(=\gph DF(\xb,\yb)\big).\]
}\fi
According to \cite{PolRo96} we say that a set-valued mapping $F:\R^n\tto\R^m$ is {\em strictly proto-differentiable} at $(\xb,\yb)$ if the mappings
\begin{equation*} \Delta_{x,y,t}F:\xi\mapsto \big(F(x+t\xi)-y)/t\quad \mbox{ for $t> 0$}\end{equation*}
graph-converge as $t\downarrow 0$ and $(x,y)\longsetto{\gph F}(\xb,\yb)$, the limit being $D_*F(\xb,\yb)$. This means that
\begin{equation}\label{EqStrictProto}\Limsup_{\AT{(x,y)\longsetto{\gph F}(\xb,\yb)}{t\downarrow 0}}\frac{\gph F-(x,y)}t= \Liminf_{\AT{(x,y)\longsetto{\gph F}(\xb,\yb)}{t\downarrow 0}}\frac{\gph F-(x,y)}t\big(=\gph D_*F(\xb,\yb)\big).\end{equation}
\begin{lemma}\label{LemStrictProto}
  If $F:\R^n\tto\R^m$ is strictly proto-differentiable at $(\xb,\yb)\in\gph F$  and $\gph F$ is locally closed at $(\xb,\yb)$ then $T_{\gph F}(\xb,\yb)$ is a subspace of $\R^n\times\R^m$ and
  \begin{align}\label{EqTangStrictProto}
    \gph DF(\xb,\yb) &= T_{\gph F}(\xb,\yb)=\Limsup_{(x,y)\longsetto{\gph F}(\xb,\yb)}T_{\gph F}(x,y)=\Liminf_{(x,y)\longsetto{\gph F}(\xb,\yb)}T_{\gph F}(x,y)\\
    \nonumber&=\widehat T_{\gph F}(\xb,\yb)=\gph D_*F(\xb,\yb).
  \end{align}
\end{lemma}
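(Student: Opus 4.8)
The plan is to unpack the definition of strict proto-differentiability \eqref{EqStrictProto} and extract from it, on the one hand, that the common Painlev\'e--Kuratowski limit is a linear subspace, and on the other hand, that this limit coincides with all the tangent-cone-type objects appearing in \eqref{EqTangStrictProto}. First I would record the trivial inclusions that hold in general for any closed-graph $F$:
\[\widehat T_{\gph F}(\xb,\yb)\subseteq T_{\gph F}(\xb,\yb)\subseteq \Limsup_{(x,y)\longsetto{\gph F}(\xb,\yb)}T_{\gph F}(x,y),\]
and $\Liminf\limits_{(x,y)\longsetto{\gph F}(\xb,\yb)}T_{\gph F}(x,y)\subseteq\Limsup\limits_{(x,y)\longsetto{\gph F}(\xb,\yb)}T_{\gph F}(x,y)$. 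So the game is to show the two ends of this chain agree, and that everything equals $\gph D_*F(\xb,\yb)$, which by definition is $T^P_{\gph F}(\xb,\yb)=\Limsup_{(x,y)\to(\xb,\yb),t\downarrow 0}(\gph F-(x,y))/t$.

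The heart of the matter is the relation between the iterated limit defining the paratingent cone and the pointwise tangent cones $T_{\gph F}(x,y)=\Limsup_{t\downarrow 0}(\gph F-(x,y))/t$. In general one only has $T^P_{\gph F}(\xb,\yb)\supseteq\Limsup_{(x,y)\to(\xb,\yb)}T_{\gph F}(x,y)$, but I claim that under local closedness these are always equal; this is a standard diagonal/closedness argument (any element of $T^P$ arises from sequences $(x_k,y_k)\to(\xb,\yb)$, $t_k\downarrow 0$, $w_k\to w$ with $(x_k,y_k)+t_kw_k\in\gph F$, and by passing to a subsequence and relabeling one sees $w$ as a limit of elements of $T_{\gph F}(x_k,y_k)$ — or one invokes \cite{RoWe98} directly). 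Hence $\gph D_*F(\xb,\yb)=\Limsup_{(x,y)\longsetto{\gph F}(\xb,\yb)}T_{\gph F}(x,y)$, which is the top of the chain. Now strict proto-differentiability says precisely that this $\Limsup$ equals the corresponding $\Liminf$, i.e. the bottom of the second chain, and also equals $\gph D_*F(\xb,\yb)$ itself; combining with the trivial inclusions forces all five sets — $\gph DF(\xb,\yb)$, $T_{\gph F}(\xb,\yb)$, the $\Limsup$, the $\Liminf$, $\widehat T_{\gph F}(\xb,\yb)$ — to coincide and to equal $\gph D_*F(\xb,\yb)$. Here I would also need the elementary fact that $\Liminf_{(x,y)\to(\xb,\yb)}T_{\gph F}(x,y)\subseteq\widehat T_{\gph F}(\xb,\yb)$, which holds in general (the regular tangent cone is the inner limit of contingent cones along $\gph F$, cf. \cite[Theorem 6.26]{RoWe98}); combined with the first chain this pins $\widehat T_{\gph F}(\xb,\yb)$ inside the sandwich as well.

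It remains to see that the common cone $K:=T_{\gph F}(\xb,\yb)$ is a linear subspace. Since $K$ is a tangent cone it is closed and contains $0$ and is closed under nonnegative scaling, so it suffices to show $K=-K$ and that $K$ is closed under addition; for a closed cone, $K=-K$ together with convexity gives a subspace, and in fact $K=\widehat T_{\gph F}(\xb,\yb)$ is always convex (the regular tangent cone is convex, cf. \cite[Theorem 6.9]{RoWe98}). So the only nontrivial point is symmetry $K=-K$. This I would get from the translation-invariance built into \eqref{EqStrictProto}: the defining limits are taken not just as $t\downarrow 0$ but simultaneously as $(x,y)\longsetto{\gph F}(\xb,\yb)$, which makes the limit set insensitive to "which base point" one differentiates at. Concretely, if $w\in K=\Liminf_{(x,y)\to(\xb,\yb),t\downarrow 0}(\gph F-(x,y))/t$, pick any sequence realizing membership, say $(x_k,y_k)\longsetto{\gph F}(\xb,\yb)$, $t_k\downarrow 0$ with $(x_k,y_k)+t_kw_k\in\gph F$ and $w_k\to w$; then the points $(x_k',y_k'):=(x_k,y_k)+t_kw_k$ also converge to $(\xb,\yb)$ along $\gph F$, and $(x_k',y_k')+t_k(-w_k)\in\gph F$, so $-w\in\Limsup_{(x,y)\to(\xb,\yb),t\downarrow 0}(\gph F-(x,y))/t=K$. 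Thus $K\subseteq -K$, hence $K=-K$, and $K$ is a subspace.

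The main obstacle is the diagonal argument identifying the paratingent cone $T^P_{\gph F}(\xb,\yb)$ with $\Limsup_{(x,y)\longsetto{\gph F}(\xb,\yb)}T_{\gph F}(x,y)$ under local closedness — the nested limits (one over the base point, one over $t$) must be interchanged carefully, extracting a single diagonal sequence, and this is where local closedness of $\gph F$ is actually used (to ensure the limiting point genuinely lies in $\gph F$ so that the tangent cone there is nonempty and captures the direction). Once that identification is in hand, the rest is bookkeeping with the standing general inclusions among $\widehat T$, $T$, $\Liminf$, $\Limsup$ and the hypothesis that the outer and inner limits in \eqref{EqStrictProto} agree.
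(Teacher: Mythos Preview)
Your overall strategy---assemble a chain of inclusions among the five cones and collapse it via strict proto-differentiability---is exactly what the paper does, and your argument for $K=-K$ is the standard proof that $T^P=-T^P$ (this is precisely what the paper cites from \cite[Proposition~2.1(a)]{HaSa23}). But there is a genuine error in the step you yourself flagged as ``the main obstacle'': the claim that $T^P_{\gph F}(\xb,\yb)=\Limsup_{(x,y)\to(\xb,\yb)}T_{\gph F}(x,y)$ holds under local closedness alone is \emph{false}. Take $\gph F=\{(t,|t|):t\in\R\}\subset\R^2$ at the origin: the tangent cone at any nearby point is one of the lines $\Span\{(1,1)\}$ or $\Span\{(1,-1)\}$, so the outer limit of tangent cones is their union; yet with $(x_k,y_k)=(1/k,1/k)$, $t_k=1/k$ and $(x_k',y_k')=(-1/k,1/k)\in\gph F$ one gets $(-2,0)\in T^P_{\gph F}(0,0)$, which lies on neither line. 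Your diagonal sketch cannot manufacture an element of $T_{\gph F}(x_k,y_k)$ close to $w_k$ here, and no such result exists in \cite{RoWe98}.

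Fortunately you do not need that reverse inclusion. You already recorded the valid one, $\Limsup_{(x,y)}T_{\gph F}(x,y)\subseteq T^P_{\gph F}(\xb,\yb)$. Strict proto-differentiability \eqref{EqStrictProto} is a statement about the \emph{joint} limits of the difference quotients; by the paper's very definitions of $T^P$ and $\widehat T$ it says exactly $T^P_{\gph F}(\xb,\yb)=\widehat T_{\gph F}(\xb,\yb)$. Plug this into your chain $\widehat T\subseteq T\subseteq\Limsup_{(x,y)}T_{\gph F}(x,y)\subseteq T^P$ and everything collapses; the remaining set $\Liminf_{(x,y)}T_{\gph F}(x,y)$ equals $\widehat T$ by \cite[Theorem~6.26]{RoWe98} (which requires local closedness---that is where the hypothesis is actually used). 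This is the paper's route; your misstep was reading strict proto-differentiability as a statement about the nested limits $\Limsup_{(x,y)}T(x,y)=\Liminf_{(x,y)}T(x,y)$ and then needing the false identity to connect back to $T^P$. The paper obtains $T^P\subseteq\Limsup_{(x,y)}T(x,y)$ only \emph{after} the chain has collapsed, via $T^P=T\subseteq\Limsup_{(x,y)}T(x,y)$.
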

\begin{proof}
  By taking into account \cite[Theorem 6.26]{RoWe98}, we obtain from \eqref{EqStrictProto} that
  \begin{align*}
  \gph DF&(\xb,\yb)=T_{\gph F}(\xb,\yb)\subset \gph D_*F(\xb,\yb)=\Limsup_{{(x,y)\longsetto{\gph F}(\xb,\yb)},\ { t\downarrow 0}}\frac{\gph F-(x,y)}t\\
  &= \Liminf_{{(x,y)\longsetto{\gph F}(\xb,\yb)},\ {t\downarrow 0}}\frac{\gph F-(x,y)}t=\Liminf_{(x,y)\longsetto{\gph F}(\xb,\yb)}T_{\gph F}(x,y)=\widehat T_{\gph F}(\xb,\yb)\subset T_{\gph F}(\xb,\yb).
  \end{align*}
  Hence, in order to prove \eqref{EqTangStrictProto} there remains to show the equality
  \[A:=\Limsup_{(x,y)\longsetto{\gph F}(\xb,\yb)}T_{\gph F}(x,y)=\Limsup_{(x,y)\longsetto{\gph F}(\xb,\yb),\ t\downarrow 0}\big(\gph F-(x,y)\big)/t=:B.\]
  Clearly, by the inclusions above we have $B=\gph D_*F(\xb,\yb)=T_\gph F(\xb,\yb)\subset A$.
  \if{
  Now consider $(u,v)\in A$ together with sequences $(x_k,y_k)\longsetto{\gph F}(\xb,\yb)$ and $(u_k,v_k)\in T_{\gph F}(x_k,y_k)$ satisfying $(u_k,v_k)\to(u,v)$. \if{By definition of the tangent cone,}\fi For every $k$ we can find some $t_k\in (0,\frac 1k)$ and some $(\hat x_k,\hat y_k)\in\gph F$ with
  \[\norm{\frac{(\hat x_k,\hat y_k)-(x_k,y_k)}{t_k}-(u_k,v_k)}\leq\frac 1k\]
  implying $\big((\hat x_k,\hat y_k)-(x_k,y_k)\big)/t_k\to (u,v)$. This proves $(u,v)\in B$ and the equality $A=B$ is established.
}\fi
By \cite[Proposition 2.1(b)]{HaSa23} the reverse inclusion $A\subset B$ always holds and the equality $A=B$ is established.

  Finally, in order to show that $T_{\gph F}(\xb,\yb)$ is a subspace, observe first that $T_{\gph F}(\xb,\yb)=\gph D_*F(\xb,\yb)=\widehat T_{\gph F}(\xb,\yb)$ is a convex cone. By \cite[Proposition 2.1(a)]{HaSa23},  the strict derivative also satisfies $\gph D_*F(\xb,\yb)=-\gph D_*F(\xb,\yb)$ and thus  $T_{\gph F}(\xb,\yb)$  is a subspace.
\end{proof}
\if{
In case of single-valued Lipschitz maps, strict proto-differentiability coincides with strict differentiability.

\begin{lemma}
  Let $\Omega\subset\R^n$ be open and let $F:\Omega\to\R^m$  be  Lipschitz near $\xb\in \Omega$. Then $F$ is strictly proto-differentiable at $(\xb,F(\xb))$ if and only if $F$ is strictly differentiable at $\xb$.
\end{lemma}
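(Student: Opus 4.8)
The plan is to prove the two implications separately, writing throughout $\yb:=F(\xb)$ and using that $\gph F$ is locally closed at $(\xb,\yb)$ (since $F$ is continuous near $\xb$); in each direction the statement will be reduced to the characterisations of strict differentiability in Lemma~\ref{LemStrictDiff}.

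\emph{Strict differentiability $\Rightarrow$ strict proto-differentiability.} Put $A:=\nabla F(\xb)$, so $\gph D_*F(\xb)=\gph A$ by \eqref{EqStrictDeriv_StrictDiff}. I would verify \eqref{EqStrictProto} directly. The inclusion $\Liminf\subseteq\Limsup$ being automatic, it suffices to show that the $\Limsup$ of $(\gph F-(x,y))/t$ as $(x,y)\longsetto{\gph F}(\xb,\yb)$, $t\downarrow 0$, lies in $\gph A$, and that $\gph A$ lies in the corresponding $\Liminf$. Any element entering either side has the form $v_k=(F(x_k+t_ku_k)-F(x_k))/t_k$ with $x_k\to\xb$, $t_k\downarrow 0$ and $u_k$ (sub)converging to some $u$ (with $x_k+t_ku_k\in\Omega$ for $k$ large, $\Omega$ being open); replacing $u_k$ by $u$ changes $v_k$ by at most $L\norm{u_k-u}\to0$ via the local Lipschitz bound, and $(F(x_k+t_ku)-F(x_k))/t_k\to Au$ by the very definition of strict differentiability, so $v_k\to Au$. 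For the $\Liminf$ inclusion one simply takes $u_k\equiv u$. This gives \eqref{EqStrictProto} with limit $\gph A=\gph D_*F(\xb)$, i.e.\ strict proto-differentiability.

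\emph{Strict proto-differentiability $\Rightarrow$ strict differentiability.} Here I would invoke Lemma~\ref{LemStrictProto}: local closedness of $\gph F$ at $(\xb,\yb)$ yields that $T:=T_{\gph F}(\xb,\yb)=\gph DF(\xb,\yb)$ is a \emph{linear subspace} of $\R^n\times\R^m$ with $T=\Limsup_{(x,y)\longsetto{\gph F}(\xb,\yb)}T_{\gph F}(x,y)$. Moreover $\onabla F(\xb)\neq\emptyset$ by Rademacher's theorem together with the local boundedness of the Jacobians. For any $A\in\onabla F(\xb)$ there are $x_k\longsetto{\OO_F}\xb$ with $\nabla F(x_k)\to A$; since $T_{\gph F}(x_k,F(x_k))=\gph\nabla F(x_k)$ at differentiability points, passing to the limit gives $\gph A\subseteq T$. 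If $B\in\onabla F(\xb)$ is a second element, then likewise $\gph B\subseteq T$, and since $T$ is a subspace, $(0,(A-B)u)=(u,Au)-(u,Bu)\in T=\gph DF(\xb,\yb)$, i.e.\ $(A-B)u\in DF(\xb)(0)$ for every $u\in\R^n$. But $DF(\xb)(0)=\{0\}$ for a locally Lipschitz map: if $(0,w)\in T_{\gph F}(\xb,\yb)$ then $w=\lim_k(F(\xb+t_ku_k)-F(\xb))/t_k$ with $u_k\to0$, $t_k\downarrow 0$, whence $\norm w\le L\,\lim_k\norm{u_k}=0$. Hence $A=B$, so $\onabla F(\xb)=\{A\}$ is a singleton, and Lemma~\ref{LemStrictDiff} (implication (ii)$\Rightarrow$(i)) gives strict differentiability of $F$ at $\xb$.

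The only step requiring structural input is the converse direction: it is essential that strict proto-differentiability forces $T_{\gph F}(\xb,\yb)$ to be a \emph{subspace} (Lemma~\ref{LemStrictProto}), since this is exactly what permits one to cancel $(u,Au)$ against $(u,Bu)$ and land inside $DF(\xb)(0)=\{0\}$. Everything else is routine bookkeeping with the sequential descriptions of the Painlev\'e--Kuratowski limits and with the local Lipschitz estimate.
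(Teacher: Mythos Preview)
Your proof is correct. The easy direction (strict differentiability $\Rightarrow$ strict proto-differentiability) matches the paper's argument, which likewise reduces to the sequential definition of strict differentiability and \eqref{EqStrictDeriv_StrictDiff}.

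For the converse, your route differs from the paper's. The paper observes that, by Lipschitz continuity, strict proto-differentiability forces the limits $\lim_k (F(x_k+t_ku)-F(x_k))/t_k$ to exist for every $u$; hence $D_*F(\xb)$ is single-valued with domain $\R^n$, and since its graph is a subspace (Lemma~\ref{LemStrictProto}) it is a linear map $u\mapsto Au$. Strict differentiability then follows \emph{directly} from the existence of these limits, without touching $\onabla F$ or Lemma~\ref{LemStrictDiff}. Your argument instead pushes the subspace property of $T_{\gph F}(\xb,\yb)$ through the $\Limsup$ of tangent cones at differentiability points to trap every element of $\onabla F(\xb)$ inside $T$, then cancels two candidates against each other and uses $DF(\xb)(0)=\{0\}$ to conclude that $\onabla F(\xb)$ is a singleton, finishing via Lemma~\ref{LemStrictDiff}. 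This is a bit more indirect (it relies on Rademacher and on Lemma~\ref{LemStrictDiff}), but it has the virtue of making explicit the connection between strict proto-differentiability and the structure of $\onabla F(\xb)$, which is precisely the link exploited later in Theorem~\ref{ThStrictProtoDiff} via Lemma~\ref{LemSCDGraphLipsch}.
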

\begin{proof}
By Lipschitz continuity of $F$, for all  convergent sequences $t_k\downarrow 0$, $u_k\to u$ and $x_k\to \xb$ the sequence $(F(x_k+t_ku_k)-F(x_k))/t_k$ has at least one accumulation point. Thus, strict proto-differentiability of $F$ at $\xb$ is equivalent with the requirement that for such sequences $t_k$, $u_k$, $x_k$ the limits
\[\lim_{k\to\infty}\frac{F(x_k+t_ku_k)-F(x_k)}{t_k}=\lim_{k\to\infty}\frac{F(x_k+t_ku)-F(x_k)}{t_k}\]
exist and are equal to $D_*F(\xb)(u)$. Hence, if $F$ is strictly proto-differentiable at $(\xb,F(\xb))$ then $D_*F(\xb)$   is single-valued, $\dom D_*F(\xb)=\R^n$ and $\gph D_*F(\xb)$ is a subspace by Lemma  \ref{LemStrictProto}. We conclude that $D_*F(\xb,\yb)$ is in fact a linear mapping from $\R^n$ to $\R^m$ and therefore there exists an $m\times n$ matrix $A$ with $D_*F(\xb)(u)=Au$, $u\in\R^n$. Thus $F$ is strictly differentiable at $\xb$. The reverse implication easily follows from the definition of strict differentiability and \eqref{EqStrictDeriv_StrictDiff}.
\end{proof}
}\fi

\subsection{Closure of mappings\label{SubSecClMap}}
It is well-known that the Clarke subdifferential $\partial^c\theta$ of a Lipschitz continuous function $\theta:\Omega\to\R$, $\Omega\subset\R^n$ open, is upper semicontinuous (usc) and has nonempty, convex and compact values $\partial^c\theta(x)$ for every $x\in\Omega$.
\if{
has the following properties:
\begin{itemize}
  \item $\partial^c \theta$ is locally bounded
  \item The set $\partial^c \theta(x)$ is nonempty, convex and compact for every $x\in \Omega$.
  \item $\partial^c \theta$ is upper semicontinuous on $\Omega$.
\end{itemize}
}\fi
We now discuss a possibility of how to extend an arbitrary locally bounded mapping to a mapping having these properties.

Given a mapping $\Psi:X\tto Y$ between two  metric spaces $X$ and $Y$, we denote by $\cl \Psi$ the mapping whose graph equals to the closure of $\gph\Psi$,  i.e.,
\[\gph(\cl\Psi) =\cl (\gph\Psi)= \{(x,y)\mv (x,y)=\lim_{k\to\infty}(x_k,y_k)\mbox{ for some sequence }(x_k,y_k)\in\gph \Psi\}.\]
Obviously, $\cl(\cl\Psi)=\cl\Psi$ and for every $x\in \dom(\cl\Psi)$ we have that $(\cl \Psi)(x)$ is closed. Further, $\dom(\cl \Psi)\subset\cl(\dom\Psi)$ and equality holds, e.g., when the metric space $Y$ is compact.

In most cases we consider set-valued mappings between Euclidean spaces.
The mapping $\Psi:\R^n\tto\R^m$ is called {\em locally bounded}
at $\xb\in \R^n$ if there is a neighborhood $U$ of $\xb$ and a nonnegative real $M$ such that
    \begin{equation}\label{EqBndDef}\norm{y}\leq M\ \forall y\in \Psi(x)\ \forall x\in U.\end{equation}
We denote by $\bnd \Psi(\xb)$ the infimum of all nonnegative reals $M$ such that \eqref{EqBndDef} holds for some neighborhood $U$ with the convention  $\bnd\Psi(\xb):=\infty$ if $\Psi$ is not locally bounded at $\xb$.
    \if{
    By the definition, the function $x\mapsto \bnd\Psi(x)$ is usc.
    }\fi
We claim that the function $\bnd\Psi:\R^n\to\R\cup\{\infty\}$ is usc, i.e., $\limsup_{x\to\xb}\bnd\Psi(x)\leq\bnd\Psi(\xb)$, $\xb\in\R^n$. Indeed, fix any $\xb \in\R^n$. If $\bnd\Psi(\xb)=\infty$ there is nothing to prove. If  $\bnd\Psi(\xb)<\infty$ for every $\epsilon>0$ we can find an open neighborhood $U$ of $\xb$ such that \eqref{EqBndDef} holds with $M=\bnd\Psi(\xb)+\epsilon$. Then it immediately follows that $\bnd\Psi(x)\leq\bnd\Psi(\xb)+\epsilon$, $x\in U$, verifying upper semicontinuity of $\bnd\Psi$ at $\xb$ because $\epsilon>0$ can be taken arbitrarily small.

Further, we denote by $\cocl \Psi$ the mapping defined by
\[(\cocl \Psi)(x):=\co\big((\cl\Psi)(x)\big),\ x\in \R^n.\]
It is easy to see that $\dom(\cocl\Psi)=\dom(\cl\Psi)$. Moreover, if $\Psi$ is locally bounded at $x\in\R^n$ then  $\cl\Psi$ and $\cocl \Psi$ are also locally bounded at $x$ and $(\cocl\Psi)(x)$ is a convex and compact set. Further, both $\cl\Psi$ and $\cocl \Psi$ are usc at $x$.

\if{
\begin{definition}
  Consider a mapping $\Psi:\R^n\tto\R^m$.
  \begin{enumerate}
    \item $\Psi$ is called {\em locally bounded} at $\xb\in \R^n$ if there is a neighborhood $U$ of $\xb$ and a nonnegative real $M$ such that
    \begin{equation}\label{EqBndDef}\norm{y}\leq M\ \forall y\in \Psi(x)\ \forall x\in U.\end{equation}
    We denote by $\bnd \Psi(\xb)$ the infimum of all reals $M$ such that \eqref{EqBndDef} holds for some neighborhood $U$.
    \item $\Psi$ is called {\em outer semicontinuous (osc)} at $\xb\in\R^n$ if $(\cl \Psi)(\xb)=\Psi(\xb)$.
    \item $\Psi$ is called {\em upper semicontinuous (usc)} at $\xb$ if $\Psi(\xb)$ is closed and for every open set  $V\supset\Psi(\xb)$ the set $\{x\mv \Psi(x)\subset V\}$ is a neighborhood of $\xb$.
  \end{enumerate}
  These terms are invoked {\em on $X$}, a subset of $\R^n$, when the property holds for every $x\in X$. The set $X$ will not be mentioned if $X=\R^n$.
\end{definition}
By the definition, the function $x\mapsto \bnd\Psi(x)$ is usc.

Given $\Psi:\R^n\tto\R^m$, we denote by $\cocl \Psi$ the mapping defined by
\[(\cocl \Psi)(x):=\co\big((\cl\Psi)(x)\big),\ x\in \R^n.\]
It is easy to see that $\dom(\cocl\Psi)=\dom(\cl\Psi)$.
\begin{lemma}\label{LemCocl}
  Let $\Psi:\R^n\tto\R^m$ be a  mapping. If $\Psi$ is locally bounded at $x\in\R^n$ then   $\cl\Psi$ and $\cocl \Psi$ are also locally bounded at $x$ and $(\cocl\Psi)(x)$ is a convex and compact set. Further, both $\cl\Psi$ and $\cocl \Psi$ are both osc and usc at $x$.
\end{lemma}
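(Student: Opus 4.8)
The plan is to verify the four assertions in turn, drawing only on the fact that $\gph(\cl\Psi)$ is closed and that $\R^m$ is finite-dimensional. First I would fix an open neighborhood $U$ of $\xb$ and a constant $M\ge 0$ such that $\norm{y}\le M$ for all $y\in\Psi(x)$ and all $x\in U$. To see that $\cl\Psi$ is locally bounded at $\xb$ with the same $U$ and $M$, take $x\in U$ and $y\in(\cl\Psi)(x)$: there is a sequence $(x_k,y_k)\in\gph\Psi$ with $(x_k,y_k)\to(x,y)$, and since $U$ is open, $x_k\in U$ for all large $k$, so $\norm{y_k}\le M$ and hence $\norm{y}\le M$. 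In particular $(\cl\Psi)(\xb)$ is bounded; being also closed (as recorded just before the statement), it is compact.

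Turning to $\cocl\Psi$: for every $x\in U$ the set $(\cl\Psi)(x)$ is contained in the closed ball $M\B$, which is convex, so $(\cocl\Psi)(x)=\co\big((\cl\Psi)(x)\big)\subset M\B$ — this gives local boundedness of $\cocl\Psi$ at $\xb$. The set $(\cocl\Psi)(\xb)$ is convex by construction, and it is compact because the convex hull of the compact set $(\cl\Psi)(\xb)$ in the finite-dimensional space $\R^m$ is compact (Carath\'eodory's theorem). This disposes of the first two claims.

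For semicontinuity of $\cl\Psi$, note first that it is osc at every point, simply because $\cl(\cl\Psi)=\cl\Psi$. For usc at $\xb$: the set $(\cl\Psi)(\xb)$ is closed, and if for some open $V\supset(\cl\Psi)(\xb)$ there were points $x$ arbitrarily close to $\xb$ with $(\cl\Psi)(x)\not\subset V$, we could pick $x_k\to\xb$ and $y_k\in(\cl\Psi)(x_k)\setminus V$; for large $k$ we have $x_k\in U$, so $\norm{y_k}\le M$, and along a subsequence $y_k\to\bar y$ with $\bar y\notin V$ since $V$ is open. As $\gph(\cl\Psi)$ is closed, $(\xb,\bar y)\in\gph(\cl\Psi)$, i.e.\ $\bar y\in(\cl\Psi)(\xb)\subset V$ — a contradiction.

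The remaining and most delicate point is semicontinuity of $\cocl\Psi$, since $\gph(\cocl\Psi)$ need not be closed by construction; the idea is to commute the convex-hull operation with the limit by means of Carath\'eodory's theorem. Suppose $(x_k,z_k)\in\gph(\cocl\Psi)$ converges to $(\xb,\bar z)$, and write $z_k=\sum_{i=0}^m\lambda^k_i w^k_i$ with $\lambda^k_i\ge 0$, $\sum_{i=0}^m\lambda^k_i=1$ and $w^k_i\in(\cl\Psi)(x_k)$. For large $k$ the coefficient vectors lie in a fixed compact simplex and the $w^k_i$ lie in $M\B$, so along a subsequence $\lambda^k_i\to\lambda_i$ and $w^k_i\to\bar w_i$ for every $i$; closedness of $\gph(\cl\Psi)$ yields $\bar w_i\in(\cl\Psi)(\xb)$, whence $\bar z=\lim_k z_k=\sum_{i=0}^m\lambda_i\bar w_i\in\co\big((\cl\Psi)(\xb)\big)=(\cocl\Psi)(\xb)$. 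Thus $\gph(\cocl\Psi)$ is closed near $\xb$, which gives osc of $\cocl\Psi$ at $\xb$; and the same limiting device, inserted into the contradiction argument used above for $\cl\Psi$ (with $M\B$ as the uniform bound and the compact set $(\cocl\Psi)(\xb)$ as target), gives usc of $\cocl\Psi$ at $\xb$. The one genuine obstacle in the whole argument is exactly this interchange of $\co$ with a limit, and Carath\'eodory's theorem — valid because $m<\infty$ — is precisely what resolves it.
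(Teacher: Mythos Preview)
Your proof is correct. The argument for local boundedness and for the semicontinuity of $\cl\Psi$ is essentially the same as the paper's. The genuine difference lies in how you handle the semicontinuity of $\cocl\Psi$: you establish osc directly by invoking Carath\'eodory's theorem to represent each $z_k$ as a convex combination of $m+1$ points of $(\cl\Psi)(x_k)$ and then pass to the limit, after which usc follows from the same compactness-plus-contradiction device. The paper instead first appeals to \cite[Theorem~5.19]{RoWe98} to reduce the question (under local boundedness) to proving usc only, and then argues usc of $\cocl\Psi$ by an $\epsilon$-enlargement trick: choose $\epsilon>0$ with $(\cocl\Psi)(x)+\epsilon\B\subset V$, use usc of $\cl\Psi$ to get $(\cl\Psi)(x')\subset(\cl\Psi)(x)+\epsilon\B$ on a neighborhood, and take convex hulls (the right-hand side is already convex). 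Your route is more self-contained since it avoids the external reference, while the paper's route bypasses Carath\'eodory entirely and exploits convexity of the $\epsilon$-ball in a slightly slicker way; both are equally valid.
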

\begin{proof}
   Let $\Psi$ be locally bounded at $x\in\R^n$ and consider an open neighborhood $U$ of $x$ such that $V:=\bigcup_{x'\in U}\Psi(x')$ is bounded. We claim that
   \[\bigcup_{x'\in U}(\cl{\Psi})(x')\subset\bigcup_{x'\in U}(\cocl{\Psi})(x')\subset\co(\cl V).\]
    Indeed, consider a point $(x',y')\in\gph (\cl\Psi)\cap(U\times\R^m)$ together with a sequence $(x_k,y_k)\longsetto{\gph \Psi}(x',y')$. Then for all $k$ sufficiently large we have $x_k\in U$ and consequently $y_k\in V$. Thus $y'\in\cl V$ and $\bigcup_{x'\in U}(\cocl{\Psi})(x')\subset\co(\cl V)$ follows. This verifies the local boundedness of $\cocl\Psi$ at $x$. Since the set $(\cl \Psi)(x)$ is closed and bounded, it is compact and the same holds true for its convex hull $(\cocl \Psi)(x)$.

    Due to the local boundedness of $\cl\Psi$  and $\cocl\Psi$ at $x$, the mappings $\cl\Psi$ and $\cocl\Psi$ are osc if and only if they are usc, cf. \cite[Theorem 5.19]{RoWe98}. Clearly, $\cl\Psi$ is osc at $x$ and there remains to show that $\cocl\Psi$ is usc at $x$. If $x\not\in \dom(\cocl\Psi)=\dom(\cl\Psi)$ then the set  $\{x'\mv x'\not\in\dom(\cl\Psi)=\dom(\cocl\Psi)\}$ is a neighborhood of $x$ since $\cl\Psi$ is usc at $x$.  Now let $x\in\dom(\cocl\Psi)$ and consider an open set $V\supset(\cocl{\Psi})(x)$. Since $(\cocl{\Psi})(x)$ is compact, we can find some $\epsilon>0$ such that the convex open set $(\cocl{\Psi})(x)+\epsilon\B$ is contained in $V$. Since the mapping $\cl \Psi$ is usc, the set $U:=\{x'\mv (\cl\Psi)(x')\subset (\cl\Psi)(x)+\epsilon\B\}$ is a neighborhood of $x$. For every $x'\in U$ we have \[(\cocl{\Psi})(x')=\co\big((\cl\Psi)(x')\big)\subset\co\big((\cl\Psi)(x)+\epsilon\B\big)=(\cocl{\Psi})(x)+\epsilon \B\subset V\]
     and the lemma is proved.
\end{proof}
}\fi
\if{
\begin{example}
  Let $F:\Omega\to\R^m$, $\Omega\subset\R^n$ open, be locally Lipschitz and consider the mapping $\Psi_F:\R^n\tto\R^{m\times n}$ given by
  $\Psi_F(x):=\{\nabla F(x)\},\ x\in \dom \Psi_F:=\OO_F$.
  Then for every $x\in\Omega$ there holds $(\cl \Psi_F)(x)=\overline \nabla F(x)$ and $(\cocl \Psi_F)(x)=\co\overline \nabla F(x)$.
\end{example}
}\fi

\subsection{SCD mappings}
In this subsection we collect only some basic facts from the theory of SCD-mappings which are needed in this paper. We refer the interested reader to \cite{GfrOut22a,GfrOut23}  for more details and more far-reaching properties.

Let us denote by $\Z_{n,m}$ the metric space of all $n$-dimensional subspaces of $\mathbb{R}^{n+m}$ equipped with the metric
\begin{equation*}
d_{\Z_{n,m}}(L_{1},L_{2}):=\|P_{1}-P_{2}\|,
\end{equation*}
where $P_{i}$ is the symmetric $(n+m) \times (n+m)$ matrix representing the orthogonal projection onto $L_{i}, i=1,2$. By \cite[Lemma 3.1]{GfrOut23}, $\Z_{n,m}$ is (sequentially) compact.

For notational reasons, we usually identify $\R^{n+m}$ with $\R^n\times\R^m$, i.e., for a subspace $L\in \Z_{n,m}$ we  denote its elements by  $(u,v)$ instead of $\myvec{u\\v}$. Analogously, given two matrices $A\in\R^{n\times n}$, $B\in\R^{m\times n}$ so that $L=\rge\begin{pmatrix}A\\B\end{pmatrix}$, we write $L=\rge(A,B)$.
\if{
\begin{lemma}[cf. {\cite[Lemma 3.1]{GfrOut23}}]\label{LemBasic}
 The metric space $\Z_{n,m}$ is (sequentially) compact.
\end{lemma}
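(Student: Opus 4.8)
The plan is to realize $\Z_{n,m}$ isometrically as a subset of a finite-dimensional normed space and then invoke the Heine--Borel theorem. Let $\Sp_{n,m}$ denote the set of all symmetric $(n+m)\times(n+m)$ matrices $P$ with $P^2=P$ and $\mathrm{tr}\,P=n$, viewed as a subset of the finite-dimensional space of symmetric $(n+m)\times(n+m)$ matrices equipped with the operator norm. Every $L\in\Z_{n,m}$ carries a unique orthogonal projector $P_L$ onto $L$, and $P_L$ is symmetric, idempotent, and of rank $n$; conversely, any symmetric idempotent $P$ of rank $n$ equals the orthogonal projector onto $\rge P$, which is $n$-dimensional, so $\rge P\in\Z_{n,m}$. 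Since rank equals trace for a symmetric idempotent, the assignment $L\mapsto P_L$ is a bijection from $\Z_{n,m}$ onto $\Sp_{n,m}$, and by the very definition $d_{\Z_{n,m}}(L_1,L_2)=\norm{P_{L_1}-P_{L_2}}$ it is an isometry. Hence it suffices to show that $\Sp_{n,m}$ is sequentially compact, and since $\Sp_{n,m}$ sits in a finite-dimensional normed space, by Heine--Borel it is enough to show that $\Sp_{n,m}$ is closed and bounded.

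Boundedness is immediate because an orthogonal projector satisfies $\norm{P}\leq 1$. For closedness I would take $P_k\in\Sp_{n,m}$ with $P_k\to P$ in the operator norm and verify the three defining conditions in turn: $P$ is symmetric as a limit of symmetric matrices; $P^2=P$ because matrix multiplication is jointly continuous and $P_k^2=P_k$ for every $k$; and $\mathrm{tr}\,P=\lim_k\mathrm{tr}\,P_k=n$ since the trace is a continuous linear functional. Thus $P\in\Sp_{n,m}$, so $\Sp_{n,m}$ is closed, hence compact, hence sequentially compact. Finally, since the bijection above is an isometry, pulling a convergent subsequence back through it shows that every sequence in $\Z_{n,m}$ has a convergent subsequence.

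There is no genuinely difficult step here; the only places asking for a line of justification are the bijectivity of $L\leftrightarrow P_L$ onto $\Sp_{n,m}$ and the identity $\mathrm{rank}\,P=\mathrm{tr}\,P$ for symmetric idempotents, both elementary. The one point to stay alert about is exactly this rank/trace issue: it is what rules out the rank dropping below $n$ in the limit (the set of rank-at-most-$n$ matrices is closed, but the set of rank-at-least-$n$ matrices is not, so one cannot argue that way directly, whereas continuity of the trace settles it at once). Since the statement is quoted from \cite[Lemma 3.1]{GfrOut23}, in the paper itself one may simply cite that source; the argument above is the self-contained version.
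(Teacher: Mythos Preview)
Your argument is correct and is the standard route: identify $\Z_{n,m}$ isometrically with the set of rank-$n$ orthogonal projectors, observe this set is closed and bounded in a finite-dimensional matrix space, and apply Heine--Borel; the trace trick to pin down the rank in the limit is exactly the right device. The paper itself gives no proof of this lemma---it simply cites \cite[Lemma 3.1]{GfrOut23}---so there is no in-paper argument to compare against, and you have already anticipated this at the end of your proposal.
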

}\fi


 With  each $L \in \Z_{n,m}$ one can associate its {\em adjoint} subspace $L^{*}$ defined by
  \begin{equation*}
L^{*}:=\{(-v^{*},u^{*}) \in \mathbb{R}^{m} \times \R^{n} \mv (u^{*},v^{*})\in L^{\perp}\}.
  \end{equation*}
  Since $\dim L^{\perp}=m$, it follows that $L^{*}\in \Z_{m,n}$ (i.e., its dimension is $m$).
  \if{
It is easy to see that \todo{shorten?}
\begin{equation*}
L^{*}= S_{nm} L^{\perp}, \mbox{ where } S_{nm} =
\left( \begin{array}{lc}
0 & -I_{m}\\
I_{n} & 0
\end{array}\right).
\end{equation*}
}\fi
Further we have $(L^*)^*=L$ and $d_{\Z_{m,n}}(L_1^*,L_2^*)=d_{\Z_{n,m}}(L_1,L_2)$ for all subspaces $L_1,L_2\in\Z_{n,m}$ and thus the mapping $L\to L^*$ is an isometry between $\Z_{n,m}$ and $\Z_{m,n}$, cf. \cite{GfrOut23}.
In what follows, the symbol $L^*$ signifies both the adjoint subspace to some $L\in \Z_{n,m}$ as well as an arbitrary subspace from $\Z_{m,n}$. This double role, however, cannot lead to a confusion.

Consider now a mapping $F: \mathbb{R}^{n}  \rightrightarrows \mathbb{R}^{m}$.

\begin{definition}
We say that $F$ is {\em graphically smooth of dimension} $n$ at $(\bar{x},\bar{y})$ if $T_{\gph F}(\bar{x},\bar{y})\in \Z_{n,m}$. By $\mathcal{O}_{F}$
 we denote the subset of $\gph F$, where $F$ is graphically smooth of dimension $n$.\\
\end{definition}
Clearly, for $(x,y)\in \mathcal{O}_{F}$ and $L=T_{\gph F}(x,y)=\gph DF(x,y)$ it holds that
$L^{\perp}=\hat{N}_{\gph F}(x,y)$ and $L^{*}=\gph \widehat{D}^{*}F(x,y)$.

Next we introduce the two derivative-like mappings
$\Sp F:\R^n \times  \R^m \tto \Z_{n,m}$ and $\Sp^* F:\R^n\times\R^m \tto \Z_{m,n}$ defined by
\begin{align*}
\Sp F(x,y):&=\{L \in \Z_{n,m} \mv \exists(x_{k},y_{k})\longsetto{\OO_F}(x,y) \mbox{ such that } \lim_{k\to\infty} d_{\Z_{n,m}}\big(L, T_{\gph F}(x_{k},y_{k})\big)=0\},
\end{align*}
and
\begin{align*}
{\mathcal{S}^{*}}F(x,y):&= \{L^*\mv L\in\Sp F(x,y)\}.
\end{align*}
Both $\Sp F$ and $\Sp^*F$ are in fact  generalized derivatives of $F$ whose elements, by virtue of the above definitions, are subspaces. Further we have the relation
\begin{gather*}
L^*\subset \gph D^{*}F(x,y)\ \forall L^*\in\Sp^*F(x,y),
\end{gather*}
cf. \cite[Eq 15]{GfrOut23}.

In what follows  $\Sp F$ will be called {\em  primal SC (subspace containing)  derivative} or {\em SC limiting graphical derivative} and $\Sp^* F$ will be termed
{\em adjoint SC  derivative (SC limiting coderivative)}. Let us mention that in \cite{GfrOut23} the SC adjoint derivative $\Sp^*F$ was defined in a different but equivalent manner.

Note that $\Sp F$ coincides  with the mapping $\cl \widehat \Sp F$, where
\[\widehat \Sp F(x,y):= T_{\gph F}(x,y),\ (x,y)\in \dom \widehat \Sp F:=\OO_F.\]
In particular, $\gph \Sp F$ is closed in $\R^n\times\R^m\times \Z_{n,m}$ and, due to the isometry $L \leftrightarrow L^*$, $\gph \Sp^* F$ is closed in $\R^n\times\R^m\times\Z_{m,n}$ as well.

On the basis of these SC derivatives we may now introduce the following notion playing a crucial role in the sequel.
\begin{definition}
A mapping
$F:\R^n\tto\R^m$ is said to have the {\em SCD property} at $(\xb,\yb)\in \gph F$, provided $\Sp^*F(\xb,\yb)\neq \emptyset$. $F$ is termed an {\em SCD mapping }if it has the SCD property at all points of $\gph F$.
\end{definition}
By virtue of the definition, the \SCD property at $(\xb,\yb)$ is obviously equivalent with the condition $\Sp F(\xb,\yb)\not=\emptyset$. Since we consider convergence in the compact metric space $\Z_{n,m}$ we readily obtain that $F$ has the \SCD property at $(\xb ,\yb)\in\gph F$ if and only if $(\xb,\yb)\in\cl \OO_F$.

\if{
Since we consider convergence in the compact metric space $\Z_{n,m}$, by using similar arguments as in the proof of \cite[Lemma 3.6]{GfrOut22a}, we readily obtain the following result. \todo{Brauchen wir das?}
\begin{lemma}\label{LemSCDproperty}
 A mapping $F:\R^n\tto\R^m$ has the \SCD property at $(x,y)\in\gph F$ if and only if $(x,y)\in\cl \OO_F$. Further, $F$ is an \SCD mapping if and only if $\cl \OO_F=\cl \gph F$, i.e., $F$ is graphically smooth of dimension $n$ at the points of a dense subset of its graph.
\end{lemma}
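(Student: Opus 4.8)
The plan is to reduce the whole statement to two facts already recorded above: that the \SCD property at a point is the nonemptiness of the primal SC derivative there, and that $\Sp F=\cl\widehat\Sp F$ with $\dom\widehat\Sp F=\OO_F$. So the first thing I would note is that, by definition together with the isometry $L\leftrightarrow L^{*}$ between $\Z_{n,m}$ and $\Z_{m,n}$, the mapping $F$ has the \SCD property at $(x,y)\in\gph F$ if and only if $\Sp F(x,y)\neq\emptyset$.

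For the pointwise equivalence I would argue both directions directly from the definition of $\Sp F$. If $L\in\Sp F(x,y)$, then by definition there is a sequence $(x_{k},y_{k})\longsetto{\OO_F}(x,y)$ with $d_{\Z_{n,m}}\big(L,T_{\gph F}(x_{k},y_{k})\big)\to 0$; in particular $(x_{k},y_{k})\in\OO_F$ and $(x_{k},y_{k})\to(x,y)$, so $(x,y)\in\cl\OO_F$. Conversely, given $(x,y)\in\cl\OO_F$, I would pick $(x_{k},y_{k})\in\OO_F$ with $(x_{k},y_{k})\to(x,y)$ and set $L_{k}:=T_{\gph F}(x_{k},y_{k})\in\Z_{n,m}$; by the sequential compactness of $\Z_{n,m}$ (\cite[Lemma 3.1]{GfrOut23}) one may pass to a subsequence along which $L_{k}\to L$ for some $L\in\Z_{n,m}$, and then $L\in\Sp F(x,y)$ by definition, whence $F$ has the \SCD property at $(x,y)$.

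For the second assertion I would combine the first part with an elementary manipulation of closures. By definition $F$ is an \SCD mapping if and only if it has the \SCD property at every $(x,y)\in\gph F$, which by the first part means $\gph F\subseteq\cl\OO_F$. Since $\OO_F\subseteq\gph F$ always gives $\cl\OO_F\subseteq\cl\gph F$, whereas taking closures in $\gph F\subseteq\cl\OO_F$ gives $\cl\gph F\subseteq\cl(\cl\OO_F)=\cl\OO_F$, the inclusion $\gph F\subseteq\cl\OO_F$ is equivalent to the equality $\cl\OO_F=\cl\gph F$; and this equality says precisely that $\OO_F$, the set where $F$ is graphically smooth of dimension $n$, is dense in $\gph F$. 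I expect no genuine obstacle here: the whole argument is bookkeeping with the definitions of $\Sp F$, $\Sp^{*}F$ and the \SCD property, the only non-routine ingredient being the compactness of $\Z_{n,m}$ invoked in the ``if'' direction of the first part, which is already at our disposal. The single point to handle with care is that the sequences in the definition of $\Sp F$ range over $\OO_F$ rather than merely over $\gph F$, which is exactly what forces the limiting base point to lie in $\cl\OO_F$.
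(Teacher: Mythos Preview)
Your proof is correct and follows exactly the route the paper indicates: the key (and only non-trivial) ingredient is the sequential compactness of $\Z_{n,m}$, which you invoke precisely where the paper does, and the remaining steps are the same bookkeeping with the definitions of $\Sp F$ and the \SCD property. The paper does not spell out the details but merely remarks that the result follows readily from compactness of $\Z_{n,m}$ (referring to \cite[Lemma 3.6]{GfrOut22a} for a similar argument), so your write-up is in fact more explicit than what the paper provides.
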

}\fi
\if{
\begin{example}
  Let $F:\R^n\tto\R^m$ and define the mappings $\widehat \Sp F:\R^n\times\R^m\tto \Z_{n,m}$ and $\widehat \Sp^* F:\R^n\times\R^m\tto \Z_{m,n}$ by
  \[\widehat \Sp F(x,y)=\begin{cases}
    T_{\gph F}(x,y)&\mbox{if $(x,y)\in \OO_F$,}\\\emptyset&\mbox{else,}
  \end{cases}\qquad
  \widehat \Sp^* F(x,y)=\begin{cases}
    T_{\gph F}(x,y)^*&\mbox{if $(x,y)\in \OO_F$,}\\\emptyset&\mbox{else.}
  \end{cases}\]
  Then for every $(x,y)\in\gph F$ there holds $\Sp F(x,y)=(\cl \widehat\Sp F)(x,y)$ and $\Sp^* F(x,y)=(\cl \widehat\Sp^* F)(x,y)$.
\end{example}
}\fi

For SCD mappings, the adjoint SC derivative $\Sp^*F$ forms some kind of skeleton of the limiting coderivative $D^*F$. It contains the information which is important in applications, in particular for numerical computations, but is much easier to compute than the limiting coderivative. On the other hand, the limiting coderivative is a more general object important for arbitrary set-valued mappings.

The derivatives $\Sp F$ and $\Sp^*F$ can be considered as a generalization of the B-Jacobian to multifunctions. If $f:\Omega\to\R^m$, $\Omega\subset\R^n$ open, is continuous, then for every $x\in \Omega$ there holds
  \begin{align*}
    &\Sp f(x):=\Sp\big(x,f(x)\big)\supseteq \{\rge(I,A)\mv A\in\overline{\nabla} f(x)\},\\
    &\Sp^* f(x):=\Sp^*\big(x,f(x)\big)\supseteq \{\rge(I,A^T)\mv A\in\overline{\nabla} f(x)\}.
  \end{align*}
  If $f$ is Lipschitz  near $x$, these inclusions hold with equality and $f$ has the \SCD property at all points $x'$ sufficiently close to  $x$, cf.\cite[Lemma 3.5]{GfrOut23}.

\if{In case of single-valued continuous mappings one has the following relationship.
\begin{lemma}[cf. {\cite[Lemma 3.5]{GfrOut23}}]
  Let $\Omega\subset \R^n$ be open and let $f:\Omega\to\R^m$ be continuous. Then for every $x\in \Omega$ there holds
  \begin{align*}
    &\Sp f(x):=\Sp\big(x,f(x)\big)\supseteq \{\rge(I,A)\mv A\in\overline{\nabla} f(x)\},\\
    &\Sp^* f(x):=\Sp^*\big(x,f(x)\big)\supseteq \{\rge(I,A^T)\mv A\in\overline{\nabla} f(x)\}.
  \end{align*}
  If $f$ is Lipschitz  near $x$, these inclusions hold with equality and $f$ has the \SCD property at all points $x'$ sufficiently close to  $x$.
\end{lemma}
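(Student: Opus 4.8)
The plan is to build a bridge between Fr\'echet differentiability of $f$ at a point $x'$ and graphical smoothness of dimension $n$ of $f$, regarded as a set-valued mapping, at $(x',f(x'))$. First I would record two preparatory facts. (a) If $f$ is Fr\'echet differentiable at $x'$, a short computation with difference quotients gives $T_{\gph f}(x',f(x'))=\rge(I,\nabla f(x'))$, which is an $n$-dimensional subspace; hence $(x',f(x'))\in\OO_f$ and the associated tangent subspace is precisely $\rge(I,\nabla f(x'))$. (b) The map $B\mapsto\rge(I,B)$ from $\R^{m\times n}$ to $\Z_{n,m}$ is continuous, because the orthogonal projection onto $\rge(I,B)$ equals $\begin{pmatrix}I\\B\end{pmatrix}(I+B^{T}B)^{-1}\begin{pmatrix}I&B^{T}\end{pmatrix}$, a continuous (indeed smooth) function of $B$; and $\rge(I,A)^{*}=\rge(I,A^{T})$ for every matrix $A$, so that at each stage the adjoint statements will follow from the primal ones via the isometry $L\leftrightarrow L^{*}$.

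To prove the inclusion $\supseteq$ (which needs only continuity of $f$), I would pick $A\in\overline\nabla f(x)$, so that $A=\lim_{k}\nabla f(x_{k})$ for some sequence $x_{k}\to x$ at whose members $f$ is differentiable. By continuity of $f$ we then have $(x_{k},f(x_{k}))\longsetto{\OO_f}(x,f(x))$, while (a) and (b) yield $T_{\gph f}(x_{k},f(x_{k}))=\rge(I,\nabla f(x_{k}))\to\rge(I,A)$ in $\Z_{n,m}$. By the definition of $\Sp f$ this gives $\rge(I,A)\in\Sp f(x,f(x))=\Sp f(x)$, and the adjoint inclusion follows from $\rge(I,A)^{*}=\rge(I,A^{T})$.

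For the Lipschitz part, assume $f$ is $\kappa$-Lipschitz on a neighborhood $U$ of $x$. The \SCD property near $x$ I would obtain from Rademacher's theorem: $f$ is differentiable at almost every point of $U$, hence at a dense subset; by (a) each such point lies below a point of $\OO_f$; and continuity of $f$ then places $(x',f(x'))$ in $\cl\OO_f$ for every $x'\in U$, which is exactly the \SCD property at $x'$. For the reverse inclusion $\subseteq$, take $L\in\Sp f(x)$ realized by a sequence $(x_{k},f(x_{k}))\longsetto{\OO_f}(x,f(x))$ with $d_{\Z_{n,m}}\big(L,T_{\gph f}(x_{k},f(x_{k}))\big)\to0$, and with $x_{k}\in U$. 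Local Lipschitz continuity forces every tangent vector $(u,v)\in T_{\gph f}(x_{k},f(x_{k}))$ to satisfy $\norm v\le\kappa\norm u$, so this $n$-dimensional subspace projects injectively, hence bijectively, onto $\R^{n}$ and therefore equals $\rge(I,B_{k})$ for a unique matrix $B_{k}$ with $\norm{B_{k}}\le\kappa$. The crucial step is to upgrade this to: $f$ is Fr\'echet differentiable at $x_{k}$ with $\nabla f(x_{k})=B_{k}$. If not, there are $h_{j}\to0$ with $\norm{f(x_{k}+h_{j})-f(x_{k})-B_{k}h_{j}}\ge c\norm{h_{j}}$ for some $c>0$; writing $t_{j}=\norm{h_{j}}$ and $u_{j}=h_{j}/t_{j}$ and passing to a subsequence, the difference quotients $(f(x_{k}+t_{j}u_{j})-f(x_{k}))/t_{j}$ (bounded by $\kappa$) converge to some $v$ with $(u,v)\in T_{\gph f}(x_{k},f(x_{k}))=\rge(I,B_{k})$ and $\norm u=1$, whence $v=B_{k}u$, contradicting $\norm{v-B_{k}u}\ge c$. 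Consequently $f$ is differentiable at $x_{k}$ with $\nabla f(x_{k})=B_{k}$; extracting a further subsequence along which $B_{k}\to A$, we get $A\in\overline\nabla f(x)$ and, by (b), $L=\lim_{k}\rge(I,B_{k})=\rge(I,A)$. Together with the first part this yields equality, and the adjoint equality again follows by the isometry.

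I expect the main obstacle to be exactly the highlighted step: upgrading graphical smoothness of dimension $n$ at a graph point to genuine Fr\'echet differentiability of $f$ with the expected Jacobian. This genuinely relies on the local Lipschitz bound --- it is false without it, as $t\mapsto t^{1/3}$ at the origin shows (graphically smooth of dimension $1$ there, but not differentiable) --- and is settled by the compactness argument on normalized difference quotients above. The remaining pieces, namely the projection formula for $\rge(I,B)$ and the identity $\rge(I,A)^{*}=\rge(I,A^{T})$, are routine verifications.
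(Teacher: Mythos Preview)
The paper does not prove this lemma; it merely states the result and cites \cite[Lemma 3.5]{GfrOut23}. So there is no in-paper proof to compare against, and I can only assess your argument on its own merits.

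Your proof is correct. The two preparatory facts (a) and (b) are exactly what is needed: (a) gives the bridge from Fr\'echet differentiability to membership in $\OO_f$ with tangent subspace $\rge(I,\nabla f(x'))$, and (b) handles convergence of subspaces and passes everything to the adjoint side via the isometry $L\leftrightarrow L^*$. The inclusion $\supseteq$ then follows directly from the definition of $\Sp f$ together with continuity of $f$, and the \SCD property near $x$ in the Lipschitz case is a clean consequence of Rademacher's theorem.

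The one nontrivial step is the converse direction under the Lipschitz assumption: from $(x_k,f(x_k))\in\OO_f$ you must recover genuine Fr\'echet differentiability of $f$ at $x_k$ with Jacobian $B_k$, where $T_{\gph f}(x_k,f(x_k))=\rge(I,B_k)$. Your compactness argument on normalized difference quotients is correct and is precisely where the Lipschitz bound enters (both to write the tangent subspace as a graph over $\R^n$ with $\norm{B_k}\le\kappa$, and to extract a convergent subsequence of the difference quotients). The $t\mapsto t^{1/3}$ counterexample you mention is apt and shows this step would fail without Lipschitz continuity. After that, extracting a convergent subsequence $B_k\to A$ and invoking (b) closes the argument.
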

}\fi

We refer to \cite{GfrOut22a,GfrOut23} for several calculus rules for SCD mappings.
\if{
For the sake of completeness we state also the following calculus rules. \todo{Prop.2.11, 2.12 brauchen wir eigentlich nicht}
\begin{proposition}[cf. {\cite[Theorem 4.1]{GfrOut23}}]\label{PropGraphLipsch}
  Given a mapping $G:\R^l\tto\R^m$ and a mapping $\Phi:\R^n\times\R^m\to\R^l\times\R^m$, consider the mapping $F:\R^n\tto\R^m$ given by
  \[\gph F=\{(x,y)\mv \Phi(x,y)\in\gph G\}.\]
  Further assume that we are given $(x,y)\in\gph F$ such that $\Phi$ is continuously differentiable in some neighborhood of $(x,y)$, $\nabla \Phi(x,y)$ has full row rank and $G$ has the SCD property at $\Phi(x,y)$. Then $F$ has the SCD property at $(x,y)$ and
  \begin{align*}
   & \Sp F(x,y)=\{L\in\Z_{n,m}\mv \nabla \Phi(x,y)L\in \Sp G(\Phi(x,y))\}\\
   & \Sp^*F(x,y)=S_{nm}\nabla \Phi(x,y)^TS_{lm}^T\Sp^* G(\Phi(x,y)).
   \end{align*}
\end{proposition}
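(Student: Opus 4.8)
The plan is to exploit that, locally around $(x,y)$, the set $\gph F$ equals the preimage $\Phi^{-1}(\gph G)$ under a $C^{1}$ map whose derivative is surjective, hence a metrically regular map by the Lyusternik--Graves theorem. I would first establish a pointwise pullback of tangent cones: on a neighborhood $W$ of $(x,y)$ on which $\nabla\Phi$ retains full row rank, for every $(x',y')\in\gph F\cap W$
\[T_{\gph F}(x',y')=\{(u,v)\mv\nabla\Phi(x',y')(u,v)\in T_{\gph G}(\Phi(x',y'))\}=\nabla\Phi(x',y')^{-1}\big(T_{\gph G}(\Phi(x',y'))\big).\]
The inclusion ``$\subseteq$'' is immediate from differentiability of $\Phi$. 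For ``$\supseteq$'' one takes $(u,v)$ with $\nabla\Phi(x',y')(u,v)\in T_{\gph G}(\Phi(x',y'))$, realizes the latter by points $\Phi(x',y')+s_{k}d_{k}\in\gph G$ with $s_{k}\downarrow0$, $d_{k}\to\nabla\Phi(x',y')(u,v)$, and invokes metric regularity of $\Phi$ at $(x',y')$ to obtain $\bdist{(x',y')+s_{k}(u,v),\Phi^{-1}(\gph G)}=o(s_{k})$, which produces difference quotients converging to $(u,v)$. Since metric regularity is a property of $\Phi$ alone, no closedness of $\gph G$ enters.

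Next I would carry out a dimension count. With $A:=\nabla\Phi(x',y')$ surjective from $\R^{n+m}$ onto $\R^{l+m}$ one has $\dim\ker A=n-l$ and $\ker A\subseteq A^{-1}(\tilde L)$ for every subspace $\tilde L$ (since $0\in\tilde L$), so $\dim A^{-1}(\tilde L)=(n-l)+\dim\tilde L$; thus $\tilde L\mapsto A^{-1}(\tilde L)$ is a bijection from $\Z_{l,m}$ onto $\{L\in\Z_{n,m}\mv\ker A\subseteq L\}$ with inverse $L\mapsto AL$. Combined with the pullback formula this gives, for $(x',y')\in\gph F\cap W$, that $(x',y')\in\OO_{F}$ if and only if $\Phi(x',y')\in\OO_{G}$, and then $T_{\gph F}(x',y')=\nabla\Phi(x',y')^{-1}\big(T_{\gph G}(\Phi(x',y'))\big)$. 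Using metric regularity at $(x,y)$ to lift a sequence $w_{k}\to\Phi(x,y)$ with $w_{k}\in\OO_{G}$ (which exists since $G$ has the SCD property at $\Phi(x,y)$) to a sequence $(x_{k},y_{k})\to(x,y)$ with $\Phi(x_{k},y_{k})=w_{k}$, one gets $(x_{k},y_{k})\in\OO_{F}$, hence $(x,y)\in\cl\OO_{F}$, so $F$ has the SCD property at $(x,y)$.

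For the formula for $\Sp F(x,y)$ I would pass to the limit in $T_{\gph F}(x_{k},y_{k})=\nabla\Phi(x_{k},y_{k})^{-1}\big(T_{\gph G}(\Phi(x_{k},y_{k}))\big)$ along sequences in $\OO_{F}$: in one direction a sequence realizing some $\tilde L\in\Sp G(\Phi(x,y))$ is lifted as above, in the other a sequence realizing $L\in\Sp F(x,y)$ is pushed forward by $\nabla\Phi(x_{k},y_{k})$; both use the elementary continuity of $(A',\tilde L')\mapsto(A')^{-1}(\tilde L')$ and of $(A',L')\mapsto A'L'$ on full row rank matrices, which follows from compactness of $\Z_{n,m}$, $\Z_{l,m}$ and the fact that convergence there is equivalent to convergence of the corresponding elements. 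This yields $\Sp F(x,y)=\{L\in\Z_{n,m}\mv\nabla\Phi(x,y)L\in\Sp G(\Phi(x,y))\}$. The coderivative formula then follows by taking adjoints: for surjective $A$ one has $\big(A^{-1}(\tilde L)\big)^{\perp}=A^{T}(\tilde L^{\perp})$, hence, by orthogonality of the matrices $S_{nm},S_{lm}$ entering the definition of the adjoint subspace, $\big(A^{-1}(\tilde L)\big)^{*}=S_{nm}\big(A^{-1}(\tilde L)\big)^{\perp}=S_{nm}A^{T}S_{lm}^{T}\tilde L^{*}$; since $\Sp^{*}F(x,y)=\{L^{*}\mv L\in\Sp F(x,y)\}$ and, by the primal formula, $\Sp F(x,y)$ corresponds under $A^{-1}(\cdot)$ exactly to $\Sp G(\Phi(x,y))$, this equals $S_{nm}\nabla\Phi(x,y)^{T}S_{lm}^{T}\Sp^{*}G(\Phi(x,y))$.

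The step I expect to be the main obstacle is the ``$\supseteq$'' half of the tangent-cone pullback formula, that is, turning the Lyusternik--Graves metric-regularity estimate into tangent vectors of $\gph F$ uniformly at the varying base points $(x',y')\in W$; the remaining arguments are linear algebra together with routine limiting arguments in the compact spaces $\Z_{n,m}$ and $\Z_{l,m}$.
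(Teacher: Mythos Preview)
The paper does not supply its own proof of this proposition: it is quoted as \cite[Theorem 4.1]{GfrOut23} and in fact the entire passage containing it is wrapped in an \verb|\if{...}\fi| block, so it is commented out of the compiled version. There is thus no in-paper argument to compare against; I can only assess the soundness of your outline.

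Your plan is correct and is essentially the standard proof of such a change-of-variables rule for SC derivatives. The tangent-cone pullback $T_{\gph F}(x',y')=\nabla\Phi(x',y')^{-1}\big(T_{\gph G}(\Phi(x',y'))\big)$ under a $C^1$ map with surjective derivative is a textbook consequence of Lyusternik--Graves metric regularity (cf.\ \cite[Exercise 6.7]{RoWe98}), and your argument for the ``$\supseteq$'' half is the right one. The dimension count is correct: for surjective $A=\nabla\Phi(x',y')$ one has $\dim A^{-1}(\tilde L)=(n-l)+\dim\tilde L$, and conversely if $L\in\Z_{n,m}$ satisfies $AL\in\Z_{l,m}$ then necessarily $\ker A\subset L$ and $L=A^{-1}(AL)$, so the correspondence $\tilde L\leftrightarrow A^{-1}(\tilde L)$ between $\Z_{l,m}$ and the relevant part of $\Z_{n,m}$ is a genuine bijection. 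The lifting of $\OO_G$-sequences to $\OO_F$-sequences via metric regularity of $\Phi$ is fine, as is the passage to the limit (the continuity of $(A',\tilde L')\mapsto (A')^{-1}(\tilde L')$ follows by compactness of $\Z_{n,m}$ exactly as you indicate, and the companion continuity of $(A',L')\mapsto A'L'$ on subspaces containing $\ker A'$ then follows from the first via $L'=(A')^{-1}(A'L')$). The adjoint computation $(A^{-1}\tilde L)^\perp=A^T(\tilde L^\perp)$ for surjective $A$, combined with $L^*=S_{nm}L^\perp$ and orthogonality of $S_{nm},S_{lm}$, yields the $\Sp^*$ formula. In short, your outline would make a complete proof; the ``main obstacle'' you anticipate is a well-known step and poses no difficulty.
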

\begin{proposition}\label{PropSumRule}
   Let $F:\R^n\tto\R^m$ have the \SCD property at $(x,y)\in\gph F$ and let $h:\Omega\to\R^m$ be continuously differentiable at $x\in \Omega$ where $\Omega\subseteq\R^n$ is open. Then $F+h$ has the \SCD property at $(x,y+h(x))$ and
  \begin{align*}
  \Sp (F+h)(x,y+h(x))=\Big\{\left(\begin{matrix}I_n&0\\\nabla h(x)&I_m\end{matrix}\right)L\mv L\in \Sp F(x,y)\Big\},\\
  \Sp^* (F+h)(x,y+h(x))=\Big\{\left(\begin{matrix}I_m&0\\\nabla h(x)^T&I_n\end{matrix}\right)L^*\mv L^*\in \Sp^* F(x,y)\Big\}.
  \end{align*}
\end{proposition}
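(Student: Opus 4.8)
The natural approach is to realize $\gph(F+h)$ as a $C^1$ change of coordinates applied to $\gph F$. Put $\Phi\colon\Omega\times\R^m\to\Omega\times\R^m$, $\Phi(u,v):=(u,v+h(u))$; since $h$ is continuously differentiable near $x$, $\Phi$ is a $C^1$-diffeomorphism there, with $\Phi^{-1}(u,w)=(u,w-h(u))$, with Jacobian
\[\nabla\Phi(u,v)=\begin{pmatrix}I_n&0\\\nabla h(u)&I_m\end{pmatrix},\]
which is invertible for every $u$ and converges to $M:=\begin{pmatrix}I_n&0\\\nabla h(x)&I_m\end{pmatrix}$ as $u\to x$ by continuity of $\nabla h$ at $x$, and with $\gph(F+h)=\Phi(\gph F)$ near $(x,y+h(x))$. (Alternatively, the statement is a special case of the graph--transformation rule \cite[Theorem~4.1]{GfrOut23}, applied with $G=F$ and the transfer mapping $(u,w)\mapsto(u,w-h(u))$; I sketch the self-contained route below.)

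First I would transport graphical smoothness. For a $C^1$-diffeomorphism one has the standard transformation rule $T_{\gph(F+h)}(\Phi(u,v))=\nabla\Phi(u,v)\,T_{\gph F}(u,v)$ for $(u,v)\in\gph F$ near $(x,y)$ (cf.\ \cite{RoWe98}). As $\nabla\Phi(u,v)$ is invertible, $T_{\gph F}(u,v)$ is an $n$-dimensional subspace if and only if $T_{\gph(F+h)}(\Phi(u,v))$ is; hence $\Phi$ maps $\OO_F$ bijectively onto $\OO_{F+h}$ near these points, and, being a homeomorphism, it maps $\cl\OO_F$ onto $\cl\OO_{F+h}$. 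In particular $(x,y+h(x))\in\cl\OO_{F+h}$, so $F+h$ has the \SCD property there. To get the formula, take $L'\in\Sp(F+h)(x,y+h(x))$ given by $\Phi(u_k,v_k)\longsetto{\OO_{F+h}}(x,y+h(x))$ with $T_{\gph(F+h)}(\Phi(u_k,v_k))\to L'$; then $(u_k,v_k)=\Phi^{-1}(\Phi(u_k,v_k))\longsetto{\OO_F}(x,y)$, and by compactness of $\Z_{n,m}$ we may pass to a subsequence with $T_{\gph F}(u_k,v_k)\to L\in\Sp F(x,y)$. Combining $T_{\gph(F+h)}(\Phi(u_k,v_k))=\nabla\Phi(u_k,v_k)\,T_{\gph F}(u_k,v_k)$ with the continuity of the mapping $(A,K)\mapsto AK$ on $\{A\in\R^{(n+m)\times(n+m)}\mv A\text{ invertible}\}\times\Z_{n,m}$ yields $L'=ML$. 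The converse inclusion is symmetric: each $L\in\Sp F(x,y)$, realized by $(u_k,v_k)\longsetto{\OO_F}(x,y)$ with $T_{\gph F}(u_k,v_k)\to L$, gives $ML\in\Sp(F+h)(x,y+h(x))$ through the points $\Phi(u_k,v_k)$. Hence $\Sp(F+h)(x,y+h(x))=\{ML\mv L\in\Sp F(x,y)\}$, which is the first displayed formula.

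For the adjoint derivative I would use $\Sp^*(F+h)(x,y+h(x))=\{(ML)^*\mv L\in\Sp F(x,y)\}$ and compute $(ML)^*$ directly. Since $M$ is invertible, $(ML)^\perp=M^{-T}L^\perp$ with $M^{-T}=\begin{pmatrix}I_n&-\nabla h(x)^T\\0&I_m\end{pmatrix}$; writing a generic element of $L^*$ as $(a,b)=(-v^*,u^*)$ with $(u^*,v^*)\in L^\perp$, the corresponding element of $(ML)^*$ equals $(a,\,\nabla h(x)^Ta+b)$, so that
\[(ML)^*=\Big\{\big(a,\,\nabla h(x)^Ta+b\big)\mv (a,b)\in L^*\Big\}=\begin{pmatrix}I_m&0\\\nabla h(x)^T&I_n\end{pmatrix}L^*.\]
Together with $\Sp^*F(x,y)=\{L^*\mv L\in\Sp F(x,y)\}$ this is the second displayed formula, and $\Sp^*$ requires no separate limit argument.

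The one non-routine ingredient is the continuity of $(A,K)\mapsto AK$ used in the two limit passages of the $\Sp$ part. I expect to verify it by passing to Painlev\'e--Kuratowski convergence of the subspaces: if $A_k\to A$ (invertible) and $K_k\to K$ in $\Z_{n,m}$, then for $w=A\ell\in AK$ one chooses $K_k\ni\ell_k\to\ell$ and obtains $A_k\ell_k\to w$ with $A_k\ell_k\in A_kK_k$, while for $w\in\Limsup_{k\to\infty}A_kK_k$, writing $w=\lim_j A_{k_j}\ell_{k_j}$ gives $\ell_{k_j}=A_{k_j}^{-1}(A_{k_j}\ell_{k_j})\to A^{-1}w\in K$ and hence $w\in AK$; as all sets involved are $n$-dimensional subspaces, this set convergence coincides with convergence in the projection metric of $\Z_{n,m}$ (cf.\ \cite{GfrOut23}). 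If one quotes the SCD literature for the continuity of $K\mapsto AK$ at a fixed invertible $A$, only the perturbation $A_k\to A$ has to be absorbed, which rests on the same estimate.
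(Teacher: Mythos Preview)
Your proof is correct and follows essentially the same route as the paper: the paper derives the sum rule as an immediate consequence of the graph-transformation rule (Proposition~\ref{PropGraphLipsch}, i.e.\ \cite[Theorem~4.1]{GfrOut23}) applied to the $C^1$-diffeomorphism $(u,w)\mapsto(u,w-h(u))$, which you explicitly identify and then unpack in a self-contained manner. Your direct computation of $(ML)^*$ is exactly the specialization of the $S_{nm}\nabla\Phi^T S_{nm}^T$ formula in Proposition~\ref{PropGraphLipsch} to the present $\Phi$.
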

\begin{proof}
  Follows from Proposition \ref{PropGraphLipsch} and the observation that $\gph (F+h)=\{(x,y)\mv (x,y-h(x))\in\gph F\}$.
\end{proof}
}\fi
An important class of SCD mappings is constituted by graphically Lipschitzian mappings:
\begin{definition}[cf. {\cite[Definition 9.66]{RoWe98}}]\label{DefGraphLip}
A mapping $F:\R^n\tto\R^m$ is {\em graphically Lipschitzian of dimension $d$} at $(\xb,\yb)\in\gph F$ if there is an open neighborhood $W$ of $(\xb,\yb)$ and a one-to-one mapping $\Phi$ from $W$ onto an open subset of $\R^{n+m}$ with $\Phi$ and $\Phi^{-1}$ continuously differentiable, such that $\Phi(\gph F\cap W)$  is the graph of a Lipschitz continuous mapping $f:U\to\R^{n+m-d}$, where $U$ is an open set in $\R^d$.
\end{definition}
\if{
As proved in \cite[Corollary 4.3]{GfrOut23}  each $F:\R^n\tto\R^m$, which is graphically Lipschitz of dimension $n$ at $(\xb,\yb)\in\gph F$, has the SCD property at $(\xb,\yb)$ and its SC generalized derivatives can be expressed in terms of $\nabla \Phi(\xb,\yb)$ and $\onabla f(\bar u)$, where $\big(\bar u,f(\bar u)\big)=\Phi(\bar x,\bar y)$.
}\fi
\begin{lemma}[cf. {\cite[Corollary 4.3]{GfrOut23}}]\label{LemSCDGraphLipsch}
 Assume that $F:\R^n\tto\R^m$ is graphically Lipschitzian of dimension $n$ at $(\xb,\yb)\in\gph F$. Then $F$ has
 the SCD property at $(\xb,\yb)$. Moreover, if $\Phi$ and $f$ are as in Definition \ref{DefGraphLip}, then
 \begin{align*}
   \Sp F(\xb,\yb&)=\{\nabla \Phi(\xb,\yb)^{-1}\rge(I,A)\mv A\in \onabla f(\bar u)\},\\
   \Sp^* F(\xb,\yb)&=\{ S_{nm}\nabla\Phi
(\bar{x},\bar{y})^{T}S^{T}_{nm}\rge(I,A^T)\mv A\in \onabla f(\bar u)\}\mbox{ with }S_{nm} :=
\begin{pmatrix}
0 & -I_{m}\\
I_{n} & 0
\end{pmatrix},
 \end{align*}
 where $\big(\bar u,f(\bar u)\big)=\Phi(\bar x,\bar y)$.
\end{lemma}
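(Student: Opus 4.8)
The plan is to reduce the claimed formulas for $\Sp F(\xb,\yb)$ and $\Sp^*F(\xb,\yb)$ to the single-valued Lipschitz case by transporting everything through the diffeomorphism $\Phi$, and then invoke the already-established fact that for a Lipschitz map $f$ near $\bar u$ one has $\Sp f(\bar u)=\{\rge(I,A)\mv A\in\onabla f(\bar u)\}$ (the equality version of the inclusion recalled after Lemma~\ref{LemSCDGraphLipsch}'s predecessor), together with the graph-transformation calculus rule (Proposition~\ref{PropGraphLipsch} in the commented-out part, i.e.\ \cite[Theorem 4.1]{GfrOut23}), which is exactly the cited source \cite[Corollary 4.3]{GfrOut23}.

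First I would fix the local picture: choose the neighborhood $W$ of $(\xb,\yb)$ and the $C^1$-diffeomorphism $\Phi$ onto an open set in $\R^{n+m}$ such that $\Phi(\gph F\cap W)=\gph f$ for a Lipschitz $f:U\to\R^m$ (here $n+m-d=m$ since $d=n$), and set $(\bar u,f(\bar u))=\Phi(\xb,\yb)$. Since the SC derivatives $\Sp F$, $\Sp^*F$ are defined purely via limits of tangent spaces $T_{\gph F}(x,y)$ at points of $\OO_F$, and since $\Phi$ is a $C^1$-diffeomorphism, the tangent cones transform linearly: $T_{\Phi(\gph F\cap W)}(\Phi(x,y))=\nabla\Phi(x,y)\,T_{\gph F}(x,y)$, and $(x,y)\in\OO_F$ near $(\xb,\yb)$ iff $\Phi(x,y)\in\OO_f$ near $(\bar u,f(\bar u))$. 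Because $L\mapsto \nabla\Phi(x,y)L$ is a homeomorphism on subspaces and $\nabla\Phi$ is continuous, passing to the limit yields
\[
\Sp F(\xb,\yb)=\{\nabla\Phi(\xb,\yb)^{-1}\tilde L\mv \tilde L\in \Sp f(\bar u)\},
\]
and then the single-valued Lipschitz formula $\Sp f(\bar u)=\{\rge(I,A)\mv A\in\onabla f(\bar u)\}$ gives the first displayed equality; the SCD property at $(\xb,\yb)$ follows because $\onabla f(\bar u)\neq\emptyset$ by Rademacher, so $\Sp F(\xb,\yb)\neq\emptyset$. The formula for $\Sp^*F$ is obtained by applying the isometry $L\mapsto L^*$ and the identity $L^*=S_{nm}L^\perp$; since $(\nabla\Phi^{-1}L)^\perp=(\nabla\Phi)^TL^\perp$ and $\rge(I,A)^\perp=\rge(-A^T,I)$, a short linear-algebra computation converts $S_{nm}(\nabla\Phi(\xb,\yb))^T S_{nm}^T$ into the stated conjugated matrix acting on $\rge(I,A^T)$.

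The only genuine work — and the step I expect to be the main obstacle — is verifying cleanly that the Painlevé–Kuratowski limit defining $\Sp$ commutes with the diffeomorphism, i.e.\ that $\cl(\widehat\Sp F)$ transports to $\cl(\widehat\Sp f)$ under the map $\tilde L\mapsto\nabla\Phi^{-1}\tilde L$ where the Jacobian is itself varying with the base point. One must be careful that the base points $(x_k,y_k)\longsetto{\OO_F}(\xb,\yb)$ correspond to base points $\Phi(x_k,y_k)\longsetto{\OO_f}(\bar u,f(\bar u))$ and that $\nabla\Phi(x_k,y_k)\to\nabla\Phi(\xb,\yb)$ in operator norm forces $d_{\Z_{n,m}}(\nabla\Phi(x_k,y_k)^{-1}L_k,\nabla\Phi(\xb,\yb)^{-1}L)\to 0$ whenever $d_{\Z_{n,m}}(L_k,L)\to 0$; this is a uniform-continuity argument on the (compact) space $\Z_{n,m}$ combined with continuity of matrix inversion. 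Once this transport lemma is in hand, everything else is either the cited single-valued result or bookkeeping with $S_{nm}$, so I would simply cite \cite[Theorem 4.1]{GfrOut23} for the transport identity rather than re-derive it, exactly as the statement of the lemma already does by attributing it to \cite[Corollary 4.3]{GfrOut23}.
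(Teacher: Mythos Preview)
Your proposal is correct and matches the paper's treatment: the paper does not supply an independent proof but simply cites \cite[Corollary~4.3]{GfrOut23}, and your argument is precisely the derivation of that corollary from the graph-transformation rule \cite[Theorem~4.1]{GfrOut23} (the commented-out Proposition~\ref{PropGraphLipsch}) applied with $G=f$ and $l=n$, combined with the Lipschitz single-valued formula $\Sp f(\bar u)=\{\rge(I,A)\mv A\in\onabla f(\bar u)\}$. Your identification of the one genuine technical step---continuity of $L\mapsto\nabla\Phi(\cdot)^{-1}L$ on $\Z_{n,m}$ so that the Painlev\'e--Kuratowski limit commutes with the diffeomorphism---and your linear-algebra bookkeeping for the adjoint (using $(\nabla\Phi^{-1}\tilde L)^\perp=\nabla\Phi^T\tilde L^\perp$ and $S_{nm}^TS_{nm}=I$) are both accurate.
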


Prominent examples for graphically Lipschitzian mappings are the subdifferential mapping $\partial f$ for prox-regular and subdifferentially continuous functions $f:\R^n\to\oR$, cf. \cite[Proposition 13.46]{RoWe98} or, more generally, locally maximally hypomonotone mappings, cf. \cite{GfrOut22a}.

\subsection{\label{SubSecSS}On semismooth$^*$ mappings}

The following definition goes back to the recent paper \cite{GfrOut21}.

\if{
\begin{definition}\label{DefSemiSmooth}
\begin{enumerate}
\item  A set $A\subseteq\R^n$ is called {\em \ssstar} at a point $\xb\in A$ if for every $\epsilon>0$ there is some $\delta>0$ such that
  \begin{equation*}
\vert \skalp{x^*,x-\xb}\vert\leq \epsilon \norm{x-\xb}\norm{x^*}\ \forall x\in A\cap\B_\delta(\xb)\
\forall x^*\in N_A(x).
\end{equation*}
\item
A set-valued mapping $F:\R^n\tto\R^m$ is called {\em \ssstar} at a point $(\xb,\yb)\in\gph F$, if
$\gph F$ is \ssstar at $(\xb,\yb)$, i.e.,   for every $\epsilon>0$ there is some $\delta>0$ such that
\begin{align}
\label{EqCharSemiSmoothLim}&\lefteqn{\vert \skalp{x^*,x-\xb}-\skalp{y^*,y-\yb}\vert}\\
\nonumber&\quad\leq \epsilon
\norm{(x,y)-(\xb,\yb)}\norm{(y^*,x^*)}\ \forall(x,y)\in \gph F\cap\B_\delta(\xb,\yb)\ \forall
(y^*,x^*)\in\gph D^*F(x,y).
\end{align}
\end{enumerate}
\end{definition}
}\fi
\begin{definition}\label{DefSemiSmooth}
A set-valued mapping $F:\R^n\tto\R^m$ is called {\em \ssstar} at a point $(\xb,\yb)\in\gph F$, if for every $\epsilon>0$ there is some $\delta>0$ such that
\begin{align}
\label{EqCharSemiSmoothLim}&\lefteqn{\vert \skalp{x^*,x-\xb}-\skalp{y^*,y-\yb}\vert}\\
\nonumber&\quad\leq \epsilon
\norm{(x,y)-(\xb,\yb)}\norm{(y^*,x^*)}\ \forall(x,y)\in \gph F\cap\B_\delta(\xb,\yb)\ \forall
(y^*,x^*)\in\gph D^*F(x,y).
\end{align}
\end{definition}
Note that the semismooth$^*$ property was defined in \cite{GfrOut21} in a different but equivalent way. The class of semismooth$^*$ mappings is rather broad. For instance, every mapping whose graph is either the union of finitely many convex sets or a closed subanalytic set, is \ssstar at every point of its graph \cite[Proposition 2.10]{GfrOut22a}.

\if{We mention here two important classes of multifunctions having this property.
\begin{proposition}[cf. {\cite[Proposition 2.10]{GfrOut22a}}]\label{PropSSstar}
  \begin{enumerate}
    \item[(i)]Every mapping whose graph is the union of finitely many closed convex sets is \ssstar at every point of its graph.
    \item[(ii)]Every mapping with closed subanalytic graph is \ssstar at every point of its graph.
  \end{enumerate}
\end{proposition}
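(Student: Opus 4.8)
The plan is to reduce the claim to a property of the set $A:=\gph F$ and its limiting normal cone, and then to verify that property under each of the two structural hypotheses. For the reduction, note that for $(x,y)\in\gph F$ and $(y^*,x^*)\in\gph D^*F(x,y)$ one has $(x^*,-y^*)\in N_A(x,y)$, that $\langle x^*,x-\bar x\rangle-\langle y^*,y-\bar y\rangle=\langle(x^*,-y^*),(x,y)-(\bar x,\bar y)\rangle$, and that $\|(y^*,x^*)\|=\|(x^*,-y^*)\|$. Writing $\bar z:=(\bar x,\bar y)$, condition \eqref{EqCharSemiSmoothLim} at $(\bar x,\bar y)$ is thus equivalent to: for every $\epsilon>0$ there is $\delta>0$ with
\[|\langle z^*,z-\bar z\rangle|\le\epsilon\,\|z-\bar z\|\,\|z^*\|\quad\text{for all }z\in A\cap\B_\delta(\bar z)\text{ and all }z^*\in N_A(z).\]
Since this inequality is positively homogeneous in $z^*$ and trivial for $z^*=0$, it suffices to check it for $\|z^*\|=1$. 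So both parts reduce to showing that a finite union of closed convex sets, respectively a closed subanalytic set, is \ssstar (as a set) at each of its points.

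For part (i), let $A=\bigcup_{i=1}^kC_i$ with the $C_i$ closed and convex, and fix $\bar z\in A$. For $\delta$ small, only the pieces with $\bar z\in C_i$ meet $\B_\delta(\bar z)$, and a routine computation for tangent and normal cones of finite unions of closed sets (using $T_A=\bigcup T_{C_i}$ locally, polarity, outer semicontinuity of the convex normal-cone map, and a pigeonhole over the finitely many pieces) gives, for $z\in A$ near $\bar z$, $N_A(z)\subseteq\bigcup\{N_{C_i}(z):z\in C_i\}$, with every such $C_i$ also containing $\bar z$. This reduces part (i) to a single closed convex set $C$ with $\bar z\in C$, which I would settle by contradiction: if the estimate failed, there would be $\epsilon_0>0$ and $z_k\to\bar z$ with $z_k\in C\setminus\{\bar z\}$, $z_k^*\in N_C(z_k)$, $\|z_k^*\|=1$, and $|\langle z_k^*,z_k-\bar z\rangle|>\epsilon_0\|z_k-\bar z\|$. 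Convexity gives $\langle z_k^*,z_k-\bar z\rangle\ge0$; passing to a subsequence, $z_k^*\to\bar z^*\in N_C(\bar z)$ (closedness of $\gph N_C$), so $\langle\bar z^*,z_k-\bar z\rangle\le0$ and hence $0\le\langle z_k^*,z_k-\bar z\rangle=\langle\bar z^*,z_k-\bar z\rangle+\langle z_k^*-\bar z^*,z_k-\bar z\rangle\le\|z_k^*-\bar z^*\|\,\|z_k-\bar z\|=o(\|z_k-\bar z\|)$, contradicting the choice of $z_k$.

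For part (ii), with $A$ closed and subanalytic, the plan is a curve-selection argument. Recalling that the limiting normal cone of a closed subanalytic set has subanalytic graph (indeed $\gph N_A=\cl(\gph\widehat N_A)$), for fixed $\epsilon_0>0$ the set $T:=\{(z,z^*):z\in A,\ z^*\in N_A(z),\ \|z^*\|=1,\ z\ne\bar z,\ \langle z^*,z-\bar z\rangle^2\ge\epsilon_0^2\|z-\bar z\|^2\}$ is subanalytic. If the set-level estimate failed at $\bar z$ for this $\epsilon_0$, then, after normalizing and passing to a subsequence, some point $(\bar z,\bar z^*)$ would lie in $\cl T$, and the curve selection lemma for subanalytic sets would supply a real-analytic arc $t\mapsto(z(t),z^*(t))$ on $[0,\tau)$ with $(z(0),z^*(0))=(\bar z,\bar z^*)$ and $(z(t),z^*(t))\in T$ for $t\in(0,\tau)$. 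Then $z(\cdot)-\bar z$ is a nonzero analytic function vanishing at $0$, so $z(t)-\bar z=t^pv+O(t^{p+1})$ with $p\ge1$, $v\ne0$, whence $\|z(t)-\bar z\|\sim\|v\|t^p$ and $z'(t)\sim p\,t^{p-1}v$. Fixing a Whitney (a)-regular analytic stratification of $A$ and shrinking $\tau$, the arc $z((0,\tau))$ stays in one stratum $M$, so $z'(t)\in T_{z(t)}M$; and for every $z_0$ in a stratum $M$ one has $N_A(z_0)\subseteq(T_{z_0}M)^{\perp}$ — each $z_0^*\in N_A(z_0)$ is a limit of Fréchet normals $z_k^*\in\widehat N_A(z_k)\subseteq(T_{z_k}M')^{\perp}$ along points $z_k$ of a single stratum $M'$ with $z_0\in\cl M'$, and Whitney's condition (a), $T_{z_0}M\subseteq\lim T_{z_k}M'$, then forces $z_0^*\perp T_{z_0}M$. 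Consequently $\langle z^*(t),z'(t)\rangle=0$ on $(0,\tau)$.

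It then remains to compare orders. The function $\phi(t):=\langle z^*(t),z(t)-\bar z\rangle$ is real-analytic on $[0,\tau)$ with $\phi(0)=0$, and since $(z(t),z^*(t))\in T$ gives $|\phi(t)|\ge\epsilon_0\|z(t)-\bar z\|>0$ for $t>0$, we have $\phi\not\equiv0$, say $\phi(t)=ct^q+O(t^{q+1})$ with $c\ne0$, $q\ge1$; comparison with $|\phi(t)|\gtrsim t^p$ forces $q\le p$. On the other hand, $\phi'(t)=\langle(z^*)'(t),z(t)-\bar z\rangle+\langle z^*(t),z'(t)\rangle=\langle(z^*)'(t),z(t)-\bar z\rangle=O(t^p)$ because $(z^*)'$ is bounded near $0$, while $\phi'(t)=cq\,t^{q-1}+O(t^q)$ with $cq\ne0$ forces $q-1\ge p$. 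The contradiction $q\le p$ versus $q\ge p+1$ finishes the proof. I expect the main obstacle to be assembling the subanalytic-geometry input of part (ii) — the subanalyticity of $\gph N_A$, the availability of a Whitney (a)-regular analytic stratification, and the stratum-normal inclusion $N_A(z_0)\subseteq(T_{z_0}M)^{\perp}$ — after which the Puiseux bookkeeping is routine; in part (i) the only delicate points are the normal-cone calculus for finite unions and the closedness of $\gph N_C$ for convex $C$. Of course, one may instead simply invoke \cite[Proposition 2.10]{GfrOut22a}.
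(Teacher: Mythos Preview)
Your proposal is essentially correct, but note that the paper itself does not prove this proposition: it is stated there only as a citation of \cite[Proposition 2.10]{GfrOut22a}, and in fact the displayed proposition sits inside an \verb|\if{...}\fi| block and is not even compiled. So there is no ``paper's proof'' to compare against; you are supplying a direct argument where the authors simply defer to the literature.

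On substance: your reduction to a set-level statement about $A=\gph F$ and $N_A$ is exactly right. In part~(i) the decomposition $N_A(z)\subseteq\bigcup_{i:\,z\in C_i}N_{C_i}(z)$ near $\bar z$ is correct once one observes that for small $\delta$ only pieces with $\bar z\in C_i$ meet $\B_\delta(\bar z)$, that $\widehat N_A(z')=\bigcap_{i:\,z'\in C_i}N_{C_i}(z')$, and that the limiting normal cone is obtained by a pigeonhole-plus-closure argument; your contradiction for a single convex piece is clean. In part~(ii) your curve-selection/Whitney-stratification argument is the standard route (this is essentially how such results are proved in, e.g., Bolte--Daniilidis--Lewis--Shiota and related work), and your Puiseux order comparison is correct: $|\phi(t)|\ge\epsilon_0\|z(t)-\bar z\|$ forces $q\le p$, while $\phi'(t)=\langle(z^*)'(t),z(t)-\bar z\rangle=O(t^p)$ together with $\phi'(t)=cq\,t^{q-1}+O(t^q)$ forces $q-1\ge p$. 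The only genuine dependencies you should make explicit are the three subanalytic inputs you already flag: subanalyticity of $\gph N_A$ for closed subanalytic $A$, the existence of a Whitney~(a) stratification, and the analytic curve selection lemma; once cited, the rest is self-contained.
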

}\fi

A locally Lipschitz mapping $F:\Omega\to\R^m$, $\Omega \subset \R^n$ open, is \ssstar at $\xb$ if and only if $F$ is G-semismooth at $\xb$  \cite[Proposition 3.7]{GfrOut21}. However, there are also single-valued mappings which are semismooth$^*$ but neither locally Lipschitz and nor G-semismooth, e.g., the function $x\mapsto\sqrt{\vert x\vert}$  at $0$.

\if{functions there holds the following result.
{
\begin{proposition}[cf. {\cite[Proposition 3.7]{GfrOut21}}]\label{PropNewtondiff}
Assume that $F:\R^n\to \R^m$ is a single-valued mapping
which is Lipschitz near $\xb$. Then the following two statements are equivalent.
\begin{enumerate}
  \item[(i)] $F$ is \ssstar at $\xb$.
  \item[(ii)] For every $\epsilon>0$ there is some $\delta>0$ such that
  \begin{equation*}
    \norm{F(x)-F(\xb)-C(x-\xb)}\leq\epsilon\norm{x-\xb}\ \forall x\in \B_\delta(\xb)\ \forall
    C\in\co \onabla F(x),
  \end{equation*}
  i.e.,
  \begin{equation}\label{EqCharSemiSmoothLipsch1}
    \sup_{C\in \co\onabla F(x)}\norm{F(x)-F(\xb)-C(x-\xb)}=\oo(\norm{x-\xb}).
  \end{equation}
\end{enumerate}
\end{proposition}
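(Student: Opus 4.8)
Write $\yb:=F(\xb)$; then property (i) means that $F$ is \ssstar at $(\xb,\yb)\in\gph F$ in the sense of Definition \ref{DefSemiSmooth}. The plan is to turn the set-valued inequality \eqref{EqCharSemiSmoothLim} into a scalar estimate and then match it with condition (ii), exploiting the standard link between the limiting coderivative and the Clarke generalized Jacobian of a mapping that is Lipschitz near a point. The bridge is the identity
\[\co\bigl(D^*F(x)(v^*)\bigr)=\{C^Tv^*\mv C\in\co\onabla F(x)\},\qquad v^*\in\R^m,\]
valid whenever $F$ is Lipschitz near $x$; it follows by combining Mordukhovich's scalarization $D^*F(x)(v^*)=\partial\skalp{v^*,F(\cdot)}(x)$, the relation $\partial^cg=\co\,\partial g$ for locally Lipschitz scalar functions $g$, and Clarke's scalarization rule for the generalized Jacobian, $\partial^c\skalp{v^*,F(\cdot)}(x)=\{C^Tv^*\mv C\in\co\onabla F(x)\}$ (cf. \cite[Theorem 2.6.6]{Cla83} and \cite{RoWe98}). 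I also record two elementary facts: with $L$ a Lipschitz constant of $F$ near $\xb$, one has $\norm{x-\xb}\le\norm{(x,F(x))-(\xb,\yb)}\le\sqrt{1+L^2}\,\norm{x-\xb}$, and $\norm{x^*}\le L\norm{v^*}$ whenever $x^*\in D^*F(x)(v^*)$.

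\emph{(i) $\Rightarrow$ (ii).} Fix $\epsilon>0$ and let $\delta$ be as in Definition \ref{DefSemiSmooth}. Take $x$ close enough to $\xb$ that $(x,F(x))\in\B_\delta(\xb,\yb)$, fix $C\in\co\onabla F(x)$, and put $w:=F(x)-\yb-C(x-\xb)$. If $w=0$ there is nothing to prove; otherwise set $v^*:=w/\norm{w}$, so that $\skalp{v^*,w}=\norm{w}$. By the bridge identity, $C^Tv^*$ is a finite (Carath\'eodory) convex combination $\sum_i\lambda_ix_i^*$ with $x_i^*\in D^*F(x)(v^*)$. Applying \eqref{EqCharSemiSmoothLim} to each pair $(v^*,x_i^*)\in\gph D^*F(x)$ and bounding the norms by the two facts above, the right-hand side is $\le\epsilon(1+L^2)\norm{x-\xb}$ for every $i$; forming $\sum_i\lambda_i(\cdot)$, the terms $\skalp{x_i^*,x-\xb}$ aggregate to $\skalp{C^Tv^*,x-\xb}=\skalp{v^*,C(x-\xb)}$ while $\skalp{v^*,F(x)-\yb}$ is unchanged, so that $\norm{w}=|\skalp{v^*,w}|\le\epsilon(1+L^2)\norm{x-\xb}$. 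This bound is uniform over $C\in\co\onabla F(x)$ and $\epsilon>0$ is arbitrary, hence condition (ii) holds.

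\emph{(ii) $\Rightarrow$ (i).} Fix $\epsilon>0$ and take $\delta>0$ from (ii). Let $(x,F(x))\in\gph F\cap\B_\delta(\xb,\yb)$ and $(y^*,x^*)\in\gph D^*F(x)$. By the inclusion $D^*F(x)(y^*)\subseteq\{C^Ty^*\mv C\in\co\onabla F(x)\}$ (part of the bridge identity) there is $C\in\co\onabla F(x)$ with $x^*=C^Ty^*$, whence
\[\skalp{x^*,x-\xb}-\skalp{y^*,F(x)-\yb}=-\skalp{y^*,\,F(x)-\yb-C(x-\xb)}.\]
Using (ii) and the Cauchy--Schwarz inequality, the modulus of the left-hand side is at most $\norm{y^*}\cdot\epsilon\norm{x-\xb}\le\epsilon\,\norm{(y^*,x^*)}\,\norm{(x,F(x))-(\xb,\yb)}$, which is exactly \eqref{EqCharSemiSmoothLim}. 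Hence $F$ is \ssstar at $(\xb,\yb)$.

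The crux is the bridge identity, i.e. recognising that the convex hull of the limiting-coderivative image $D^*F(x)(v^*)$ coincides with the image of the Clarke generalized Jacobian under $C\mapsto C^Tv^*$; this is where one must invoke, in precisely the right form, the scalarization of coderivatives of Lipschitz maps and reconcile it with Clarke's calculus. Everything afterwards is routine manipulation with the Cauchy--Schwarz inequality and the Lipschitz bound. (The asserted equivalence is, in essence, \cite[Proposition 3.7]{GfrOut21}, which the statement cites.)
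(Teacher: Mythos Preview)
Your argument is correct. The paper does not give its own proof of this proposition; it merely cites \cite[Proposition 3.7]{GfrOut21}, so there is nothing to compare against beyond noting that your scalarization route via the identity $\co\bigl(D^*F(x)(v^*)\bigr)=\{C^Tv^*\mv C\in\co\onabla F(x)\}$ is the standard one and matches the spirit of the original reference. One minor remark: in the step ``forming $\sum_i\lambda_i(\cdot)$'' you are implicitly using the convexity of $|\cdot|$ (Jensen) to pass from $\sum_i\lambda_i|a_i|\le\epsilon(1+L^2)\norm{x-\xb}$ to $|\sum_i\lambda_i a_i|\le\epsilon(1+L^2)\norm{x-\xb}$; this is of course fine, but it would not hurt to say so explicitly.
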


Let us compare the semismooth$^*$ property with the semismooth property as introduced in \cite{QiSun93}.
\begin{definition}\label{DefSS}
  Let $\Omega\subseteq\R^n$ be nonempty and open. A function
$F:\Omega\to\R^m$ is {\em semismooth} at $\xb\in \Omega$, if it is  Lipschitz near $\xb$ and if
\[\lim_{\AT{A\in\co\onabla F(\xb+tu')}{u'\to u,\ t\downarrow 0}}Au'\]
exists for all $u\in\R^n$. If F is semismooth at all $\xb\in \Omega$, we call $F$ {\em semismooth on} $\Omega$.
\end{definition}

Then the following relation is well-known, see, e.g., \cite[Proposition 2.7]{Ulb11}.
\begin{proposition}
  Let $F:\Omega\to\R^m$ be defined on the open set $\Omega\subset\R^n$. Then for $\xb\in \Omega$ the following two statements are equivalent.
  \begin{enumerate}
    \item $F$ is semismooth at $\xb$,
    \item $F$ is Lipschitz near $\xb$, the directional derivative $F'(\xb;\cdot)$ exists and \eqref{EqCharSemiSmoothLipsch1} is fulfilled.
  \end{enumerate}
\end{proposition}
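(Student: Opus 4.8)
The plan is to pass back and forth between the coderivative-based inequality in Definition~\ref{DefSemiSmooth} and the Jacobian-based estimate in (ii), using the standard description of the limiting coderivative of a Lipschitzian single-valued mapping. Fix a Lipschitz modulus $\ell$ of $F$ on an open neighbourhood $U$ of $\xb$; then $\onabla F(x)\neq\emptyset$ and $\norm{A}\le\ell$ for every $A\in\co\onabla F(x)$ and every $x\in U$. I would record two facts. \emph{Fact A:} since the limiting coderivative of a Lipschitzian mapping is contained in its Clarke coderivative (cf.~\cite{RoWe98} and \cite[Theorem~2.6.6]{Cla83}), one has $D^\ast F(x)(y^*)\subseteq\{A^Ty^*\mv A\in\co\onabla F(x)\}$ for all $x\in U$, $y^*\in\R^m$; moreover, if $x\in\OO_F$, then $T_{\gph F}(x,F(x))$ is the graph of the linear map $u\mapsto\nabla F(x)u$, so that $\widehat D^\ast F(x)(y^*)=\{\nabla F(x)^Ty^*\}$ and hence $\nabla F(x)^Ty^*\in D^\ast F(x)(y^*)$. \emph{Fact B:} since $F$ is single-valued and continuous near $\xb$, every point of $\gph F$ close to $(\xb,\yb):=(\xb,F(\xb))$ has the form $(x,F(x))$, and $\norm{x-\xb}\le\norm{(x,F(x))-(\xb,\yb)}\le\sqrt{1+\ell^2}\,\norm{x-\xb}$, while $\norm{(y^*,A^Ty^*)}\le\sqrt{1+\ell^2}\,\norm{y^*}$ whenever $\norm{A}\le\ell$.

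For the implication (ii)~$\Rightarrow$~(i): given $\epsilon>0$, use (ii) to pick $\delta>0$ with $\B_\delta(\xb)\subset U$ and $\sup\{\norm{F(x)-F(\xb)-C(x-\xb)}\mv C\in\co\onabla F(x)\}\le\epsilon\,\norm{x-\xb}$ on $\B_\delta(\xb)$. For $(x,y)\in\gph F\cap\B_\delta(\xb,\yb)$ (hence $y=F(x)$ and $x\in\B_\delta(\xb)$ by Fact~B) and $(y^*,x^*)\in\gph D^\ast F(x,y)$, I would write $x^*=A^Ty^*$ with $A\in\co\onabla F(x)$ via Fact~A and observe that
\[\skalp{x^*,x-\xb}-\skalp{y^*,y-\yb}=\bskalp{y^*,\,A(x-\xb)-\big(F(x)-F(\xb)\big)},\]
whose modulus is at most $\norm{y^*}\cdot\sup\{\norm{F(x)-F(\xb)-C(x-\xb)}\mv C\in\co\onabla F(x)\}\le\epsilon\,\norm{y^*}\,\norm{x-\xb}$, which by Fact~B is $\le\epsilon\,\norm{(x,y)-(\xb,\yb)}\,\norm{(y^*,x^*)}$. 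This is exactly the inequality defining the \ssstar property at $(\xb,\yb)$.

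For the implication (i)~$\Rightarrow$~(ii): given $\epsilon>0$, take $\delta>0$ as in Definition~\ref{DefSemiSmooth} with $\B_\delta(\xb)\subset U$, and set $\delta':=\delta/\sqrt{1+\ell^2}$. If $x\in\OO_F\cap\B_{\delta'}(\xb)$, then $(x,F(x))\in\B_\delta(\xb,\yb)$ by Fact~B, so feeding $x^*=\nabla F(x)^Ty^*\in D^\ast F(x)(y^*)$ into the \ssstar inequality and taking the supremum over unit vectors $y^*$ gives
\[\norm{F(x)-F(\xb)-\nabla F(x)(x-\xb)}\le\epsilon(1+\ell^2)\,\norm{x-\xb}\quad\text{for all }x\in\OO_F\cap\B_{\delta'}(\xb).\]
I would then take an arbitrary $x$ with $\norm{x-\xb}<\delta'/2$ and an arbitrary $C\in\onabla F(x)$, written $C=\lim_k\nabla F(x_k)$ with $\OO_F\ni x_k\to x$; for $k$ large, $x_k\in\B_{\delta'}(\xb)$, the displayed bound applies to $x_k$, and letting $k\to\infty$ (continuity of $F$) yields $\norm{F(x)-F(\xb)-C(x-\xb)}\le\epsilon(1+\ell^2)\,\norm{x-\xb}$. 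Finally, writing a general $C\in\co\onabla F(x)$ as $\sum_i\lambda_iC_i$ with $C_i\in\onabla F(x)$, $\lambda_i\ge0$, $\sum_i\lambda_i=1$, the triangle inequality transfers the estimate to $C$; hence $\sup\{\norm{F(x)-F(\xb)-C(x-\xb)}\mv C\in\co\onabla F(x)\}\le\epsilon(1+\ell^2)\,\norm{x-\xb}$ on $\B_{\delta'/2}(\xb)$, and since $\epsilon>0$ was arbitrary this supremum is $\oo(\norm{x-\xb})$.

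The main obstacle, I expect, is the gap recorded in Fact~A: $D^\ast F(x)(y^*)$ may be a \emph{proper} subset of $\{A^Ty^*\mv A\in\co\onabla F(x)\}$, so in the direction (i)~$\Rightarrow$~(ii) one cannot simply substitute an arbitrary generalized Jacobian element into the \ssstar inequality. The remedy is to test the inequality only at genuine points of differentiability --- where the regular coderivative is the singleton $\{\nabla F(x)^Ty^*\}$ and therefore automatically lies inside the limiting coderivative --- and then to recover the full B-Jacobian $\onabla F(x)$ by passing to the limit and its convex hull by convexity. Everything else is routine bookkeeping; in particular, the passage between the norm on $\R^{n+m}$ and the norm on $\R^n$ only costs the harmless factor $1+\ell^2$.
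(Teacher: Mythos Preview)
You have misidentified statement (1). In the paper this proposition immediately follows the Qi--Sun definition of semismoothness (Definition~\ref{DefSS}), so ``$F$ is semismooth at $\xb$'' means: $F$ is Lipschitz near $\xb$ and for every $u\in\R^n$ the limit
\[\lim_{\AT{A\in\co\onabla F(\xb+tu')}{u'\to u,\ t\downarrow 0}}Au'\]
exists. It does \emph{not} mean the coderivative-based semismooth$^*$ property of Definition~\ref{DefSemiSmooth}. What your argument actually establishes is the equivalence of the semismooth$^*$ property with \eqref{EqCharSemiSmoothLipsch1} under local Lipschitzness --- which is precisely the content of the \emph{preceding} Proposition~\ref{PropNewtondiff}, not of the proposition in question.

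Two concrete symptoms of the mismatch: first, you never address the existence of the directional derivative $F'(\xb;\cdot)$, which is an explicit ingredient of statement (2) and has no counterpart anywhere in your proof; second, you never engage with the Qi--Sun limit condition that constitutes statement (1). The intended proof (which the paper does not supply, instead citing \cite[Proposition~2.7]{Ulb11}) must show that the Qi--Sun limit condition is equivalent to directional differentiability together with the estimate \eqref{EqCharSemiSmoothLipsch1}; coderivatives play no role in that argument.
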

Hence, by virtue of Proposition \ref{PropNewtondiff}, for locally Lipschitz functions the difference between the semismooth$^*$ property and the semismooth property is just the directional differentiability. Further, we want to mention that under local Lipschitz continuity the semismooth property as defined by Gowda \cite{Gow04} coincides with our semismooth$^*$ property.
However, there are also single-valued functions which are semismooth$^*$ but not locally Lipschitz, e.g., the function $x\mapsto\sqrt{\vert x\vert}$  at $0$. This example demonstrates that even for single-valued functions the semismooth$^*$ property differs from the semismooth property more than only by the lack of directional differentiability.
}\fi

Sometimes it is useful to replace $\gph D^*F(x,y)$ in the relation \eqref{EqCharSemiSmoothLim}  by another set. \if{We will call a set-valued mapping $\Gamma:\R^p\tto\R^q$ {\em cone-valued} if for every $z\in \dom\Gamma$ the set $\Gamma(z)$ is a (not necessarily convex) cone. Note that the mapping $(x,y)\tto\gph D^*F(x,y)$ is cone-valued.}\fi

\begin{definition}
Let $F:\R^n\tto\R^m$ and $\Gamma:\R^n\times\R^m\tto\R^m\times \R^n$ be mappings and let $(\xb,\yb)\in\gph F$. We say that $F$ is {\em \ssstar with respect to $\Gamma$} (or shortly {\em $\Gamma$-ss$^*$}) at $(\xb,\yb)$, if the following three conditions are fulfilled:
\begin{enumerate}
  \item $\Gamma$ is cone-valued,
  \item there is some neighborhood $U$ of $(\xb,\yb)$ such that $\gph F\cap U\subset \dom \Gamma$,
  \item for every $\epsilon>0$ there is some $\delta>0$ such that
\begin{align*}
&\lefteqn{\vert \skalp{x^*,x-\xb}-\skalp{y^*,y-\yb}\vert}\\
&\qquad\leq \epsilon\norm{(x,y)-(\xb,\yb)}\norm{(y^*,x^*)}\ \forall(x,y)\in \gph F\cap\B_\delta(\xb,\yb)\ \forall (y^*,x^*)\in\Gamma(x,y).
\end{align*}
\end{enumerate}
We say that $F$ is $\Gamma$-ss$^*$ around $(\xb,\yb)$ if $F$ is $\Gamma$-ss$^*$ for all $(x,y)\in\gph F$ sufficiently close to $(\xb,\yb)$. Finally, $F$ is called $\Gamma$-ss$^*$ if $F$ is $\Gamma$-ss$^*$ at every $(x,y)\in\gph F$.
\end{definition}
Note that the semismooth$^*$ property introduced in Definition \ref{DefSemiSmooth} amounts to the semismooth$^*$ property of $F$ at $(\xb,\yb)$ with respect to $\Gamma(x,y)=\gph D^*F(x,y)$.

The notion of $\Gamma$-semismoothness$^*$ has two main applications. Firstly, the calculus rules for coderivatives often yield only inclusions of the form $D^*F(x,y)(y^*)\subset \Sigma(x,y)(y^*)$ with some closed-graph mapping $\Sigma$.
However, as in our motivating application, sometimes it is not necessary to compute the exact limiting coderivative but elements from $\Sigma$ related to the semismooth$^*$ property. Thus, if we can show that $F$ is $\Gamma$-ss$^*$ with respect to the mapping $\Gamma(x,y)=\gph\Sigma(x,y)$ then we can also work with $\Sigma$ instead of $D^*F$.

Secondly, the $\Gamma$-ss$^*$ property will be useful in connection with the theory of SCD mappings.
\begin{definition}
  We say that the mapping $F:\R^n\tto\R^m$ is SCD-ss$^*$  at (around)  $(\xb,\yb)\in\gph F$  if it is semismooth$^*$  at (around) $(\xb,\yb)$ with respect to the mapping
  $\bigcup\Sp^*F: \R^n\times\R^m\tto \R^m\times\R^n$ given by
\[(\bigcup\Sp^*F)(x,y):=\bigcup_{L^*\in\Sp^* F(x,y)} L^*.\]
The mapping $F$ is called SCD-ss$^*$  if it is SCD-ss$^*$  at every point of its graph.
\end{definition}
It follows from the definition of the $\Gamma$-ss$^*$ property that $F$ has the SCD property around $(\xb,\yb)$ whenever $F$ is SCD-ss$^*$ at $(\xb,\yb)$.

The SCD-ss$^*$ property has also a useful primal interpretation:

\begin{proposition}[cf. {\cite[Proposition 2.12]{Gfr24}}]\label{PropSCD_ss_Primal}
Assume that $F:\R^n\tto\R^m$ has the SCD property around  the  pair $(\xb,\yb)\in\gph F$. Then the following two statements are equivalent:
  \begin{enumerate}
    \item $F$ is SCD-ss$^*$ at $(\xb,\yb)$.
    \item
    $\limsup\limits_{(x,y)\longsetto{\gph F}(\xb,\yb)}\ \sup\limits_{L\in \Sp F(x,y)}{\dist{(x-\xb,y-\yb),L}}/{\norm{(x-\xb,y-\yb)}} =0.$
   \if{ For every $\epsilon>0$ there is some $\delta>0$ such that for every $(x,y)\in \gph F\cap\B_\delta(\xb,\yb)$ and every $L\in \Sp F(x,y)$ there holds \todo{Schreiben als limsup}
    \[\dist{(x-\xb,y-\yb),L}\leq\epsilon\norm{(x-\xb,y-\yb)}.\]
    }\fi
  \end{enumerate}
\end{proposition}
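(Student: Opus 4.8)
The plan is to establish the equivalence as a chain of exact reformulations starting from the definition of the SCD-ss$^*$ property (i.e.\ the semismoothness$^*$ of $F$ with respect to $\Gamma:=\bigcup\Sp^*F$), the main tool being the elementary identity
\[\dist{w,L}=\sup\{\,|\skalp{z^*,w}|\mv z^*\in L^\perp,\ \norm{z^*}\le 1\,\}=\norm{P_{L^\perp}w},\]
valid for every linear subspace $L$ and every vector $w$, where $P_{L^\perp}$ denotes the orthogonal projection onto $L^\perp$.

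First I would observe that, since by assumption $F$ has the SCD property around $(\xb,\yb)$, the mapping $\Gamma=\bigcup\Sp^*F$ automatically satisfies the first two requirements in the definition of the $\Gamma$-ss$^*$ property: each $L^*$ is a linear subspace, hence a cone, so $\Gamma$ is cone-valued; and $\dom\Gamma=\{(x,y)\mv\Sp^*F(x,y)\neq\emptyset\}$ contains $\gph F\cap U$ for some neighbourhood $U$ of $(\xb,\yb)$. Consequently, statement (1) is equivalent to the third requirement: for every $\epsilon>0$ there is $\delta>0$ such that
\[|\skalp{x^*,x-\xb}-\skalp{y^*,y-\yb}|\le\epsilon\,\norm{(x,y)-(\xb,\yb)}\,\norm{(y^*,x^*)}\]
for all $(x,y)\in\gph F\cap\B_\delta(\xb,\yb)$ and all $(y^*,x^*)\in(\bigcup\Sp^*F)(x,y)$.

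Next I would rewrite both the membership and the bilinear expression. By the definition of the adjoint subspace, $(y^*,x^*)\in(\bigcup\Sp^*F)(x,y)$ means precisely that there is $L\in\Sp F(x,y)$ with $(x^*,-y^*)\in L^\perp$. Setting $w:=(x-\xb,y-\yb)$ and $z^*:=(x^*,-y^*)$ and using $\norm{(y^*,x^*)}=\norm{z^*}$ together with $\skalp{x^*,x-\xb}-\skalp{y^*,y-\yb}=\skalp{z^*,w}$, the above condition becomes: for every $\epsilon>0$ there is $\delta>0$ such that $|\skalp{z^*,w}|\le\epsilon\,\norm{w}\,\norm{z^*}$ for all $(x,y)\in\gph F\cap\B_\delta(\xb,\yb)$, all $L\in\Sp F(x,y)$, and all $z^*\in L^\perp$. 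For fixed $(x,y)$ and $L$, taking the supremum over $z^*\in L^\perp$ and invoking the identity above converts this into $\dist{w,L}\le\epsilon\norm{w}$; hence the condition is equivalent to $\sup_{L\in\Sp F(x,y)}\dist{w,L}/\norm{w}\le\epsilon$ for all $(x,y)\in\gph F\cap\B_\delta(\xb,\yb)$ with $w\neq 0$ (the supremum being over a nonempty set because $F$ has the SCD property around $(\xb,\yb)$). Finally, the standard equivalence ``$(\forall\epsilon>0)(\exists\delta>0)(\cdots)\le\epsilon$'' $\Longleftrightarrow$ ``$\limsup(\cdots)=0$'' for a nonnegative quantity turns this last statement into (2), which closes the argument.

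I do not expect a genuine obstacle: the proof is a succession of exact equivalences resting only on the definition of the adjoint subspace $L^*$ and the projection formula for the distance to a subspace. The points that merit a little care are the bookkeeping of the nested quantifiers (in particular the passage from ``for all $z^*\in L^\perp$'' to the single scalar inequality via the supremum), the remark that the hypothesis ``SCD property around $(\xb,\yb)$'' makes the cone-valuedness and domain conditions in the $\Gamma$-ss$^*$ definition vacuous, and the degenerate case in which $(\xb,\yb)$ is an isolated point of $\gph F$, where both (1) and (2) hold trivially.
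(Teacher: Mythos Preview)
Your argument is correct. The paper itself does not give a proof of this proposition but merely cites \cite[Proposition 2.12]{Gfr24}, so there is no in-paper proof to compare against; your derivation via the identity $\dist{w,L}=\sup\{|\skalp{z^*,w}|\mv z^*\in L^\perp,\ \norm{z^*}\le 1\}$ together with the correspondence $(y^*,x^*)\in L^*\Leftrightarrow (x^*,-y^*)\in L^\perp$ is exactly the natural route and matches what one finds in the cited reference.
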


\section{On $\Psi$-semismooth single-valued mappings}

The following definition is closely related to the definition of semismooth operators in Banach spaces, cf. \cite[Definition 3.1]{Ulb11}.
\begin{definition}\label{DefSS_Psi}
  Consider the mapping $F:\Omega\to\R^m$, where $\Omega\subset\R^n$ is open, and the multifunction $\Psi:\R^n\tto\R^{m\times n}$.
  \begin{enumerate}
  \item We say that $F$ is {\em semismooth  with respect to $\Psi$} (shortly, $\Psi$-ss) at $\xb\in\Omega$ if $\dom \Psi$ is a neighborhood of $\xb$, $\Psi$ is locally bounded at $\xb$ and
    \begin{equation}\label{EqGsemismooth}
      \lim_{\AT{x\setto{\Omega} \xb}{x\not=\xb}}\frac{\sup\{\norm{F(x)-F(\xb)-A(x-\xb)}\mv A\in\Psi(x)\}}{\norm{x-\xb}}=0.
    \end{equation}
    \item We say that $F$ is  $\Psi$-ss  on a subset $U\subset\Omega$ if $F$ is $\Psi$-ss at every $x\in U$. The mapping $\Psi$ is called a {\em semismooth derivative} of $F$ on $U$.
    \end{enumerate}
\end{definition}

 By this definition, a locally Lipschitz mapping $F$ is $\co\onabla F$-ss if and only if it is G-semismooth.

Our definition of semismoothness differs from Ulbrich's one \cite[Definition 3.1]{Ulb11} insofar as we do not suppose local continuity of $F$ but require local boundedness of $\Psi$.
 If $F$ is $\Psi$-ss at $\xb$  then $\Psi$ is also called a {\em Newton map} for $F$ at $\xb$ in \cite{KlKu02}. If $\Psi$ is single-valued then it is also called a Newton derivative (or slanting function) for $F$ at $\xb$ in the literature.  We dispense with using this terminology because they are normally used at single points, namely solutions of the nonsmooth equation $F(x)=0$. However, for our purposes it is essential that the properties hold  not only at one single point $\xb$ but also  on a neighborhood of $\xb$. For instance, for solving \eqref{EqBasProbl} with the BT-algorithm, we will show below that any semismooth derivative of $\theta$ on an open superset of $\Uad$ is a feasible choice for the oracle.

 In the literature one can find also several papers about the $\Psi$-ss property under some additional assumptions on $\Psi$: For instance, if $F$ is $\Psi$-ss at $\xb$, $\Psi(x)$ is compact for all $x$ close to $\xb$ and $\Psi$ is usc at $\xb$, then $F$ is called {\em C-differentiable} at $\xb$ and $\Psi$ is called a {\em C-differential operator} of $F$ \cite{Qi96}.

Though we do not explicitly require (Lipschitz) continuity of $F$ in Definition \ref{DefSS_Psi} , it is implied by \eqref{EqGsemismooth} together with the boundedness of $\Psi$. It is easy to see that a mapping $F:\Omega\to\R^m$, $\Omega\subset\R^n$ open, which is $\Psi$-ss at $\xb\in\Omega$, is calm at $\xb$ and $\clm F(\xb)\leq\bnd \Psi(\xb)$. Further, as we will show below in Theorem \ref{ThNorkinR_m}, if $F$ is $\Psi$-ss on $\Omega$, then it is also locally Lipschitz there. The main reason for omitting the requirement of Lipschitz continuity in Definition \ref{DefSS_Psi} is that we want to deal also with mappings which are implicitly defined by solution mappings of generalized equations, where verifying Lipschitz continuity with conventional tools like limiting coderivatives might be a difficult issue.

There is a big difference between pointbased semismoothnes and semismoothness on a neighborhood. For instance, given any mapping $F:\Omega\to\R^m$, $\Omega\subset\R^n$ open, and $\xb\in \Omega$, the mapping $\Psi:\R^n\tto\R^{m\times n}$ given by $\dom\Psi=\Omega$ and
\[\Psi(x)=\begin{cases}\frac{(F(x)-F(\xb))(x-\xb)^T}{\norm{x-\xb}^2}&\mbox{if $x\in\Omega\setminus\{\xb\}$,}\\
0&\mbox{if $x=\xb$}\end{cases}\]
obviously satisfies \eqref{EqGsemismooth}. If $F$ is  calm at $\xb$ then  $\Psi$ is also locally bounded at $\xb$ and therefore $F$ is $\Psi$-ss at $\xb$. However, $F$ will not be $\Psi$-ss on any neighborhood of $\xb$, except for very special cases. As an example consider $F:\R\to\R$, $F(x):=x^2$ and $\xb:=0$. The formula above gives $\Psi(x)=x$ and
\[\lim_{\AT{x\to x'}{x\not= x'}}\frac{\norm{F(x)-F(x')-\Psi(x)(x-x)}}{\norm{x-x'}} = \lim_{\AT{x\to x'}{x\not= x'}}\frac{\vert x^2-x'^2-x(x-x')\vert}{\vert x-x'\vert}=\vert x'\vert\not=0\]
for every $x'\not=\xb$.

Moreover, in the pointbased setting we may modify a semismooth derivative $\Psi$ by adding matrices of order $\OO(\norm{F(x)-F(\xb)})$
to $\Psi(x)$ without violating \eqref{EqGsemismooth}. This issue is sometimes exploited in  semismooth Newton methods for finding zeros of $F$ by neglecting terms of order $\OO(\norm{F(x)})$, see, e.g. \cite{PaBe13}. However, with such a modification the semismoothness property on a neighborhood will no longer hold.

If a mapping $F:\Omega\to\R^m$, $\Omega\subset \R^n$ open, is $\Psi$-ss at $\xb\in\Omega$, it follows immediately from the definition that each component $F_i$, $i=1,\ldots,m$, is semismooth at $\xb$ with respect to
\begin{equation}\label{EqPsi_ssComponent}
\Psi_i(x):=\{e_i^TA\mv A\in\Psi(x)\},
\end{equation}
where $e_i$ denotes the $i$-th unit vector. Conversely. if each component $F_i$, $i=1,\ldots,m$, is semismooth at $\xb\in\Omega$ with respect to some mapping $\Psi_i:\R^n\to\R^{1\times n}$, then $F$ is semismooth  at $\xb$ with respect to
\begin{equation}\label{EqPsi_from_Components}
\Psi(x):=\{A\in\R^{m\times n}\mv e_i^TA\in\Psi_i(x)\}.
\end{equation}
This construction resembles the one of Qi's C-subdifferential, see, e.g. \cite[Proposition 2.1]{Ulb11}.

 Clearly, continuously differentiable mappings $F$  are $\nabla F$-ss.

It is well-known that Newton maps satisfy a chain rule, see, e.g., \cite[Theorem 6.14]{KlKu02}. For the sake of completeness we state the following result, where in contrast to \cite{KlKu02} we do not assume Lipschitz continuity of the involved mappings $F_1,F_2$:
\begin{lemma}\label{LemChainRule}
   Consider the  mappings $F_1:U\to\R^p$, $F_2:V\to\R^m$, where $U\subset\R^n$ and $V\subset\R^p$ are open sets with $F_1(U)\subset V$. Assume that $F_1$ is semismooth at $\xb\in U$  with respect to the mapping $\Psi_1:\R^n\tto \R^{p\times n}$ and that $F_2$ is semismooth at $F_1(\xb)$ with respect to the mapping $\Psi_2:\R^p\tto\R^{m\times p}$. Then  the composite mapping $F:U\to\R^m$ given by  $F(x):=F_2(F_1(x))$ is $\Psi$-ss  at $\xb$ with  $\Psi$ given by
  \[\Psi(x):=\{BA\mv A\in\Psi_1(x),\ B\in \Psi_2(F_1(x))\}.\]
\end{lemma}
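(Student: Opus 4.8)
The plan is to verify the three ingredients of $\Psi$-semismoothness of $F = F_2 \circ F_1$ at $\bar x$ directly from the definition. First I would check that $\dom \Psi$ is a neighborhood of $\bar x$: since $F_1$ is $\Psi_1$-ss at $\bar x$, $\dom \Psi_1$ contains a neighborhood of $\bar x$, and since $F_2$ is $\Psi_2$-ss at $F_1(\bar x)$, $\dom \Psi_2$ contains a neighborhood of $F_1(\bar x)$; by continuity of $F_1$ at $\bar x$ (which follows from $\Psi_1$-semismoothness together with local boundedness of $\Psi_1$, as remarked after Definition \ref{DefSS_Psi}, giving calmness hence at least continuity at $\bar x$) the preimage of that neighborhood under $F_1$ is a neighborhood of $\bar x$, and on the intersection $\Psi(x)$ is a nonempty set of products $BA$. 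Local boundedness of $\Psi$ at $\bar x$ then follows from $\bnd \Psi(\bar x) \le \bnd \Psi_2(F_1(\bar x)) \cdot \bnd \Psi_1(\bar x)$, using again that $F_1$ maps a neighborhood of $\bar x$ into a neighborhood of $F_1(\bar x)$ on which $\Psi_2$ is bounded.

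The heart of the argument is the estimate \eqref{EqGsemismooth} for $\Psi$. Fix $x$ near $\bar x$ and $C = BA \in \Psi(x)$ with $A \in \Psi_1(x)$, $B \in \Psi_2(F_1(x))$, and write $y = F_1(x)$, $\bar y = F_1(\bar x)$. The standard splitting is
\[
F(x) - F(\bar x) - BA(x - \bar x) = \big(F_2(y) - F_2(\bar y) - B(y - \bar y)\big) + B\big((y - \bar y) - A(x - \bar x)\big).
\]
Taking norms and the supremum over admissible $A, B$, the first term is bounded by $\sup_{B \in \Psi_2(y)} \norm{F_2(y) - F_2(\bar y) - B(y - \bar y)}$, which by $\Psi_2$-semismoothness of $F_2$ at $\bar y$ is $o(\norm{y - \bar y})$ as $y \to \bar y$; and since $F_1$ is calm at $\bar x$ with $\norm{y - \bar y} \le (\clm F_1(\bar x) + 1)\norm{x - \bar x}$ for $x$ close enough, this is $o(\norm{x - \bar x})$ as $x \to \bar x$ (noting $y \to \bar y$ by continuity of $F_1$). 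The second term is bounded by $\norm{B} \cdot \sup_{A \in \Psi_1(x)} \norm{F_1(x) - F_1(\bar x) - A(x - \bar x)}$, where $\norm{B}$ is bounded by (a constant slightly larger than) $\bnd \Psi_2(\bar y)$ for $x$ near $\bar x$, and the supremum is $o(\norm{x - \bar x})$ by $\Psi_1$-semismoothness of $F_1$ at $\bar x$. Adding the two bounds gives \eqref{EqGsemismooth} for $\Psi$.

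The step I expect to require the most care is handling the absence of Lipschitz continuity: one must extract from $\Psi_1$-semismoothness alone the calmness bound $\clm F_1(\bar x) \le \bnd \Psi_1(\bar x) < \infty$ on $F_1$ at $\bar x$ (already noted in the text), and use it to convert the $o(\norm{y - \bar y})$ rate for $F_2$ into an $o(\norm{x - \bar x})$ rate, as well as to confirm $y \to \bar y$ whenever $x \to \bar x$ so that the $\Psi_2$-semismoothness estimate at $\bar y$ is actually applicable. One also needs the uniform bound on $\norm{B}$ over $B \in \Psi_2(F_1(x))$ for $x$ near $\bar x$, which again rests on local boundedness of $\Psi_2$ near $\bar y$ together with $F_1$ mapping a small ball around $\bar x$ into a small ball around $\bar y$. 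All of these are routine consequences of the hypotheses, so no genuine obstacle arises; the only subtlety is keeping the quantifiers straight, since the suprema over $A$ and $B$ must be taken in the right order relative to the product decomposition above.
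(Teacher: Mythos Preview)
Your proposal is correct and follows essentially the same approach as the paper's proof: both use the standard splitting $F(x)-F(\bar x)-BA(x-\bar x) = \big(F_2(y)-F_2(\bar y)-B(y-\bar y)\big) + B\big((y-\bar y)-A(x-\bar x)\big)$, bound the first term via $\Psi_2$-semismoothness together with calmness of $F_1$ at $\bar x$, bound the second via $\Psi_1$-semismoothness together with local boundedness of $\Psi_2$, and obtain local boundedness of $\Psi$ from the product bound $\norm{BA}\le\norm{B}\norm{A}$. The only cosmetic difference is that the paper carries explicit constants $\kappa_1,\kappa_2$ and an $\epsilon$--$\delta$ argument, whereas you phrase the same estimates in $o(\cdot)$ notation.
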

\begin{proof}By calmness of $F_1$ at $\xb$ and calmness of $F_2$ at $F_1(\xb)$   we can find  neighborhoods $\tilde U\subset U$ of $\xb$  and $\tilde V\subset V$ of $F_1(\xb)$ together with two positive reals $\kappa_1,\ \kappa_2$ such that $\norm{F_1(x)-F_1(\xb)}\leq\kappa_1\norm{x-\xb}$ $\forall x\in\tilde U$, $\norm{F_2(y)-F_2(F_1(\xb))}\leq \kappa_2\norm{y-F_1(\xb)}$ $\forall y\in \tilde V$ and  $F_1(\tilde U)\subset \tilde V$. By possibly shrinking $\tilde U$, $\tilde V$ and enlarging $\kappa_1,\kappa_2$ we may also assume that $\bnd \Psi_1(x)\leq\kappa_1$ $\forall x\in \tilde U$ and $\bnd \Psi_2(y)\leq\kappa_2$ $\forall y\in\tilde V$. Next, for arbitrary $\epsilon>0$ we choose a neighborhood $V_\epsilon\subset\tilde V$ of $F_1(\xb)$ such that for each $y\in V_\epsilon$ and each $B\in \Psi_2(y)$ one has $\norm{F_2(y)-F_2(F_1(\xb))-B(y-F_1(\xb))}\leq\epsilon\norm{y-F_1(\xb)}$. Finally we choose a neighborhood $U_\epsilon\subset\tilde U$ such that $F_1(U_\epsilon)\subset V_\epsilon$ and for every $x\in U_\epsilon$ and every $A\in \Psi_1(x)$ one has $\norm{F_1(x)-F_1(\xb)-A(x-\xb)}\leq\epsilon\norm{x-\xb}$. Thus, for every $x\in U_\epsilon$, every $A\in\Psi_1(x)$ and every $B\in \Psi_2(F_1(x))$ we obtain
\begin{align*}
  &\lefteqn{\norm{F(x)-F(\xb)-BA(x-\xb)}}\\
  &\leq \norm{F_2(F_1(x))-F_2(F_1(\xb))-B(F_1(x)-F_1(\xb))}+\norm{B\big(F_1(x)-F_1(\xb)-A(x-\xb)\big)}\\
  &\leq \epsilon\norm{F_1(x)-F_1(\xb)}+\kappa_2\epsilon\norm{x-\xb}\leq\epsilon(\kappa_1+\kappa_2)\norm{x-\xb}.
\end{align*}
 In order to show the local boundedness of $\Psi$ at $\xb$ just note that for every $x\in\tilde U$ and every $BA\in \Psi(x)$ we have $\norm{BA}\leq\norm{B}\norm{A}\leq\kappa_2\kappa_1$. This proves that $F$ is $\Psi$-ss at $\xb$.
\end{proof}
Using similar arguments one can prove the following lemma.
\begin{lemma}
Consider an open set $\Omega\subset\R^n$, two mappings $F_i:\Omega\to\R^m$ and two mappings $\Psi_i:\R^n\tto\R^m$, $i=1,2$. If $F_i$ is $\Psi_i$-ss, $i=1,2$, at $\xb\in\Omega$ then $F_1+F_2$ is $(\Psi_1+\Psi_2)$-ss at $\xb$.
\end{lemma}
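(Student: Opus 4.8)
The plan is to verify the three defining conditions of Definition \ref{DefSS_Psi} for the mapping $\Psi:=\Psi_1+\Psi_2$, i.e., the multifunction with values $\Psi(x)=\{A_1+A_2\mv A_1\in\Psi_1(x),\ A_2\in\Psi_2(x)\}$. The whole argument mirrors the structure of the proof of Lemma \ref{LemChainRule}, only with a triangle inequality in place of the composition estimate, so it is considerably shorter.

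First I would record the easy structural facts. Since $F_i$ is $\Psi_i$-ss at $\xb$, each $\dom\Psi_i$ is a neighborhood of $\xb$, hence so is $\dom\Psi=\dom\Psi_1\cap\dom\Psi_2$. Moreover, picking $\delta>0$ and bounds $M_i$ with $\norm{A}\le M_i$ for all $A\in\Psi_i(x)$, $x\in\B_\delta(\xb)$, the subadditivity $\norm{A_1+A_2}\le\norm{A_1}+\norm{A_2}$ shows that $\Psi$ is locally bounded at $\xb$, with $\bnd\Psi(\xb)\le\bnd\Psi_1(\xb)+\bnd\Psi_2(\xb)$.

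The only computational step is the semismoothness limit \eqref{EqGsemismooth}. For $x\in\Omega\setminus\{\xb\}$ near $\xb$ and arbitrary $A_i\in\Psi_i(x)$, the triangle inequality gives
\[\norm{(F_1+F_2)(x)-(F_1+F_2)(\xb)-(A_1+A_2)(x-\xb)}\le\sum_{i=1}^2\norm{F_i(x)-F_i(\xb)-A_i(x-\xb)}.\]
Since every element of $\Psi(x)$ is of the form $A_1+A_2$ with $A_i\in\Psi_i(x)$, I would pass to the supremum over $A\in\Psi(x)$ on the left and over $A_i\in\Psi_i(x)$ on the right (using that the supremum of the sum over a product of index sets is at most the sum of the suprema), then divide by $\norm{x-\xb}$ and let $x\setto{\Omega}\xb$. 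Both terms on the right tend to $0$ by \eqref{EqGsemismooth} applied to $F_1$ and to $F_2$, hence the left-hand side tends to $0$ as well, which is exactly \eqref{EqGsemismooth} for $F_1+F_2$ and $\Psi$, and the claim follows.

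There is essentially no obstacle here; the single point that requires a moment's care is precisely the interchange of the supremum and the sum just mentioned, and this is immediate once the index sets are written out.
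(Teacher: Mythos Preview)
Your proof is correct and is exactly what the paper has in mind: the paper does not spell out an argument but merely states ``Using similar arguments one can prove the following lemma'' after Lemma~\ref{LemChainRule}, and your verification of the three items of Definition~\ref{DefSS_Psi} via the triangle inequality is precisely that similar (and simpler) argument.
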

The next auxiliary statement is important in the application we have in mind.
\begin{lemma}\label{LemNorkin}Let $F:\Omega\to\R^m$, $\Omega\subset\R^n$ open, be continuous and assume that $F$ is semismooth at $\xb\in\Omega$ with respect to $\Psi:\R^n\tto\R^{m\times n}$. Then  $F$ is also $\cl\Psi$-ss and $\cocl\Psi$-ss at $\xb$.
\end{lemma}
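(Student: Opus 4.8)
The plan is to verify, for $\Psi'\in\{\cl\Psi,\cocl\Psi\}$, the three requirements of the $\Psi'$-ss property at $\xb$ from Definition \ref{DefSS_Psi}. Two of them come for free: since $\gph\Psi\subseteq\gph(\cl\Psi)$ we have $\dom\Psi\subseteq\dom(\cl\Psi)=\dom(\cocl\Psi)$, so both domains are neighborhoods of $\xb$; and by the facts recorded in Subsection \ref{SubSecClMap}, local boundedness of $\Psi$ at $\xb$ passes to $\cl\Psi$ and $\cocl\Psi$. Hence all the work lies in re-establishing the limit \eqref{EqGsemismooth} with $\Psi$ replaced first by $\cl\Psi$ and then by $\cocl\Psi$.

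For $\cl\Psi$ I would argue by a closure/limit passage. Fix $\epsilon>0$ and let $\delta_0>0$ be furnished by the $\Psi$-ss property of $F$ at $\xb$, with $\B_{\delta_0}(\xb)\subseteq\Omega\cap\dom\Psi$ and $\norm{F(x')-F(\xb)-A'(x'-\xb)}\le\epsilon\norm{x'-\xb}$ for all $x'$ with $0<\norm{x'-\xb}<\delta_0$ and all $A'\in\Psi(x')$. Put $\delta:=\delta_0/2$, take any $x\in\Omega$ with $0<\norm{x-\xb}\le\delta$ and any $A\in(\cl\Psi)(x)$, and approximate $(x,A)$ by a sequence $(x_k,A_k)\in\gph\Psi$ with $x_k\to x$, $A_k\to A$. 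Since $\Omega$ is open, $x\in\Omega$ and $x\ne\xb$, for all large $k$ one has $x_k\in\Omega$, $0<\norm{x_k-\xb}<\delta_0$ and $A_k\in\Psi(x_k)$, so the $\Psi$-ss inequality gives $\norm{F(x_k)-F(\xb)-A_k(x_k-\xb)}\le\epsilon\norm{x_k-\xb}$; letting $k\to\infty$ and using continuity of $F$ at $x$ (together with $A_k(x_k-\xb)\to A(x-\xb)$ and $\norm{x_k-\xb}\to\norm{x-\xb}$) yields $\norm{F(x)-F(\xb)-A(x-\xb)}\le\epsilon\norm{x-\xb}$. As the bound did not depend on $A$, taking the supremum over $A\in(\cl\Psi)(x)$ is harmless:
\[\sup\{\norm{F(x)-F(\xb)-A(x-\xb)}\mv A\in(\cl\Psi)(x)\}\le\epsilon\norm{x-\xb}\quad\text{whenever }0<\norm{x-\xb}\le\delta,\]
which is exactly \eqref{EqGsemismooth} for $\cl\Psi$.

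For $\cocl\Psi$ I would reduce to the previous case by convexity. Since $(\cocl\Psi)(x)=\co\big((\cl\Psi)(x)\big)$, any $A\in(\cocl\Psi)(x)$ is a convex combination $A=\sum_{i=1}^N\lambda_iA_i$ with $A_i\in(\cl\Psi)(x)$, $\lambda_i\ge0$, $\sum_i\lambda_i=1$; because the weights sum to one, $F(x)-F(\xb)-A(x-\xb)=\sum_i\lambda_i\big(F(x)-F(\xb)-A_i(x-\xb)\big)$, and convexity of the norm together with the estimate already obtained for $\cl\Psi$ gives $\norm{F(x)-F(\xb)-A(x-\xb)}\le\epsilon\norm{x-\xb}$ on the same punctured ball, i.e. \eqref{EqGsemismooth} for $\cocl\Psi$.

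The one genuinely delicate point is the passage to the limit in $k$: one must ensure that for large $k$ the points $x_k$ stay in $\Omega\cap\dom\Psi$, stay away from $\xb$, and stay inside $\B_{\delta_0}(\xb)$ — this is precisely why $\delta$ is taken to be $\delta_0/2$ — and that the bound $\epsilon\norm{x_k-\xb}$ is uniform over all admissible matrices so that the supremum survives the limit. It is exactly here that continuity of $F$ (and not merely local boundedness of $\Psi$) is used, to conclude $F(x_k)\to F(x)$; this is why continuity of $F$ appears among the hypotheses of Lemma \ref{LemNorkin}.
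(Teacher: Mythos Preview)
Your proof is correct and follows essentially the same approach as the paper's: shrink the $\delta$ furnished by $\Psi$-semismoothness to $\delta/2$, approximate any $A\in(\cl\Psi)(x)$ by a sequence $(x_k,A_k)\in\gph\Psi$, pass to the limit using continuity of $F$, and then handle $\cocl\Psi$ by writing matrices as convex combinations. Your version is slightly more explicit than the paper's in that you also verify the domain and local-boundedness requirements of Definition~\ref{DefSS_Psi}, which the paper leaves implicit.
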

\begin{proof}
  Let  $\epsilon>0$ be arbitrarily fixed.  By definition we can find some $\delta>0$ such that there holds
  \[\norm{F(x)-F(\xb)-A(x-\xb)}\leq\epsilon\norm{x-\xb}\ \forall x\in\B_\delta(\xb)\ \forall A\in\Psi(x).\]
  For every  $x\in \B_{\delta/2}(\xb)\setminus\{\xb\}$ and $A\in(\cl\Psi)(x)$  we can find some sequence $(x_k,A_k)\longsetto{\gph\Psi}(x,A)$. Since $x_k\in\B_\delta(\xb)$ for all $k$ sufficiently large, we can conclude
  \[\frac{\norm{F(x)-F(\xb)-A(x-\xb)}}{\norm{x-\xb}}=\lim_{k\to\infty} \frac{\norm{F(x_k)-F(\xb)-A_k(x_k-\xb)}}{\norm{x_k-\xb}}<\epsilon,\]
  verifying that $F$ is $\cl\Psi$-ss at $\xb$. The $\cocl\Psi$-ss property of $F$ at $\xb$ follows now easily from the fact  that any matrix $A\in(\cocl\Psi)(x)$ can be written as  convex combination $A=\sum_{i=1}^N\lambda_iA_i$ with $A_i\in(\cl\Psi)(x)$ and $\sum_{i=1}^N\lambda_i=1$, $\lambda_i\geq0$, $i=1,\ldots,N$.
\end{proof}
\begin{remark}As a consequence of this lemma we obtain that a continuous  mapping $F:\Omega\to\R^m$, $\Omega\subset \R^n$ open, which is $\Psi$-ss at $\xb\in\Omega$, is also C-differentiable at $\xb$ with C-differential operator $\cl\Psi$ or $\cocl\Psi$. Further, for any sequence $x_k\setto{\Omega}\xb$ we can select a bounded sequence $A_k\in\Psi(x_k)$ and, by passing to a subsequence $x_{k_j}$ such that $A_{k_j}$ converges to some $A\in(\cl\Psi)(\xb)$, we obtain that
\[F(x_{k_j})-F(\xb)-A(x_{k_j}-\xb)=\oo(\norm{x_{k_j}-\xb}),\]
i.e., $F$ is {\em H-differentiable} at $\xb$ with H-differential $(\cl\Psi)(\xb)$ in the sense of \cite{GoRa00}.
\end{remark}

Consider now the following definition.
\begin{definition}[cf. {\cite{No78,No80}}]
  A function $f:\R^n\to\R$ is called {\em generalized differentiable}  if there exists an usc  mapping $G:\R^n\tto\R^n$ with nonempty, convex and compact values $G(x)$ such that
  for every $\xb\in\R^n$ there holds
    \[\sup_{g\in G(x)}\vert f(x)-f(\xb)-\skalp{g,x-\xb}\vert=\oo(\norm{x-\xb}).\]
  The elements of $G(x)$ are called {\em pseudogradients}.
\end{definition}
By  Lemma \ref{LemNorkin}, a $\Psi$-ss function $f:\R^n\to\R$ is also generalized differentiable with
\[G(x)=\{g\in\R^n\mv g^T\in(\cocl\Psi)(x)\}.\]
Note that pseudogradients are column vectors whereas the elements of $\Psi$ are $1\times n$ matrices, i.e., row vectors.

 Generalized differentiable functions have a lot of interesting properties, cf. \cite{No78,No80,MiGuNo87}, which can be carried over to $\Psi$-semismooth functions $f:\Omega\to\R$, $\Omega\subset\R^n$ open. For the sake of completeness we state some of these properties in the following proposition.

\begin{proposition}\label{PropNorkin}
  Let $\Omega\subset\R^n$ be open and let $f:\Omega\to\R$ be semismooth on $\Omega$ with respect to the mapping $\Psi:\R^n\tto\R^{1\times n}$. Then the following statements hold true.
  \begin{enumerate}
    \item[(i)] $f$ is locally Lipschitz on $\Omega$ and for every $x\in\Omega$  there holds $\partial^cf(x)^T\subset(\cocl\Psi)(x)$.
    \item[(ii)] Let $\Omega_\Psi$ denote the set of all $x\in\Omega$ such that $(\cocl\Psi)(x)$ is a singleton. Then $\Omega\setminus\Omega_\Psi$ has Lebesgue measure zero. Moreover, the function $f$ is strictly differentiable at every $x\in\Omega_\Psi$ with $\{\nabla f(x)\}=(\cocl\Psi)(x)=\Psi(x)$.
  \end{enumerate}
\end{proposition}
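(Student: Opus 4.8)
The plan is to establish (i) first and deduce (ii) from it.

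\emph{Local Lipschitz continuity.} As noted right after Definition~\ref{DefSS_Psi}, $\Psi$-semismoothness of $f$ at a point $x$ already gives $\clm f(x)\le\bnd\Psi(x)<\infty$, so $f$ is continuous on $\Omega$. Fix $\xb\in\Omega$ and a closed ball $\B_r(\xb)\subset\Omega$. Since $\bnd\Psi$ is usc (Subsection~\ref{SubSecClMap}) and finite on the compact ball, $M:=\max_{x\in\B_r(\xb)}\bnd\Psi(x)<\infty$, hence $\clm f(x)\le M$ for every $x$ in the convex open set $\inn\B_r(\xb)$. An elementary argument (restrict $f$ to a line segment and use that a continuous $h$ on $[0,1]$ with $\clm h\le M$ everywhere satisfies $|h(1)-h(0)|\le M$) then shows $f$ is $M$-Lipschitz on $\inn\B_r(\xb)$; thus $f$ is locally Lipschitz on $\Omega$. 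Consequently $\partial^c f(x)^T=\co\onabla f(x)$ is nonempty, convex and compact, with support function $u\mapsto f^\circ(x;u):=\limsup_{x'\to x,\,t\downarrow0}\big(f(x'+tu)-f(x')\big)/t$ (cf.\ \cite{Cla83}).

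\emph{The inclusion in (i).} Set $\sigma(x,u):=\sup\{Au\mv A\in(\cocl\Psi)(x)\}=\sup\{Au\mv A\in(\cl\Psi)(x)\}$. Since $(\cocl\Psi)(x)$ is convex and compact (Subsection~\ref{SubSecClMap}), $\partial^c f(x)^T\subset(\cocl\Psi)(x)$ is equivalent to $f^\circ(x;u)\le\sigma(x,u)$ for all $u\in\R^n$. To prove the latter, fix $u$; for $x'$ near $x$ and $t>0$ small, $s\mapsto h(s):=f(x'+su)$ is Lipschitz on $[0,t]$, so $\big(f(x'+tu)-f(x')\big)/t=\frac1t\int_0^t h'(s)\,ds$, and wherever $h'(s)$ exists it equals $D_uf(x'+su)$. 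The key step is: \emph{if $D_uf(y)$ exists then $D_uf(y)=Au$ for some $A\in(\cl\Psi)(y)$} --- approaching $y$ along $\pm u$, extracting (by local boundedness of $\Psi$) a convergent subsequence of the corresponding matrices from $\Psi$, whose limit lies in $(\cl\Psi)(y)$, and reading the identity off the semismoothness estimate at $y$. Hence $h'(s)\le\sigma(x'+su,u)\le\sup_{y\in[x',x'+tu]}\sigma(y,u)$ for a.e.\ $s$, so $\big(f(x'+tu)-f(x')\big)/t\le\sup_{y\in[x',x'+tu]}\sigma(y,u)$. Since $\cl\Psi$ is usc, closed-valued and locally bounded, $y\mapsto\sigma(y,u)$ is usc; letting $x'\to x$ and $t\downarrow0$ (so the segments collapse to $\{x\}$) yields $f^\circ(x;u)\le\sigma(x,u)$.

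\emph{Part (ii).} Put $G(x):=\{g\mv g^T\in(\cocl\Psi)(x)\}$; by Lemma~\ref{LemNorkin} applied at each $x\in\Omega$, $f$ is $(\cocl\Psi)$-ss there, i.e.\ $f$ is generalized differentiable on $\Omega$ with pseudo-differential $G$ (as recalled before the Proposition): $\sup_{g\in G(z)}\vert f(z)-f(x)-\skalp{g,z-x}\vert=\oo(\norm{z-x})$ as $z\to x$, $G$ is usc with nonempty convex compact values, and $\Omega_\Psi=\{x\mv G(x)\text{ is a singleton}\}$. Fix a unit vector $u$ and $x_0\in\Omega$ and specialize the estimate to $z=x_0+su$; subtracting it for two elements $g_1,g_2\in G(x_0+su)$ shows that the width $w_u(y):=\sup\{\skalp{g_1-g_2,u}\mv g_1,g_2\in G(y)\}$ satisfies $\lim_{s\to0}w_u(x_0+su)=0$. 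A nonnegative function on an interval whose limit is $0$ at every point vanishes outside a countable set (for each $\epsilon>0$ its $\epsilon$-superlevel set has no accumulation point), hence a.e.; thus $w_u$ vanishes a.e.\ on every line parallel to $u$, and since $w_u$ is usc, hence Borel, Fubini's theorem gives $w_u=0$ a.e.\ in $\Omega$. With $D$ a countable dense set of directions, $\Omega\setminus\Omega_\Psi=\bigcup_{u\in D}\{x\mv w_u(x)>0\}$ is a countable union of null sets, hence null. Finally, for $x\in\Omega_\Psi$ the set $(\cocl\Psi)(x)$ is a singleton; since $\emptyset\ne\onabla f(x)\subset\co\onabla f(x)=\partial^c f(x)^T\subset(\cocl\Psi)(x)$ by (i), $\onabla f(x)$ is a singleton, so $f$ is strictly differentiable at $x$ by Lemma~\ref{LemStrictDiff} and $\onabla f(x)=\{\nabla f(x)\}$; hence $(\cocl\Psi)(x)=\{\nabla f(x)\}$, and $\emptyset\ne\Psi(x)\subset(\cl\Psi)(x)\subset(\cocl\Psi)(x)$ gives $\Psi(x)=\{\nabla f(x)\}$.

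\emph{The main obstacle} is the core of (i): converting the pointwise semismoothness estimate into the one-sided bound $f^\circ(x;u)\le\sigma(x,u)$, which requires the integration-along-segments device, the identity $D_uf(y)=Au$ for some $A\in(\cl\Psi)(y)$, and the upper-semicontinuous limit passage. The delicate point there is that a \emph{single} direction $u$ suffices precisely because only an upper bound on the generalized directional derivative is needed, not a reconstruction of $\Psi$ from $f$; by contrast, the one-variable lemma and Fubini argument in (ii), and the reduction to Lemma~\ref{LemStrictDiff}, are routine (and the measure-zero statement could alternatively be quoted from the theory of generalized differentiable functions, cf.\ \cite{No80,MiGuNo87}).
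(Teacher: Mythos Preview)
Your proof is correct and complete. It takes a genuinely different route from the paper, which simply cites \cite[Theorems 1.1, 1.10, 1.12]{MiGuNo87} for the substantive claims and only adapts the statements from $\R^n$ to a general open $\Omega$.

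For part (i), you argue via the support-function characterization $\partial^c f(x)^T\subset(\cocl\Psi)(x)\Leftrightarrow f^\circ(x;u)\le\sigma(x,u)$, establishing the inequality by integrating $h'(s)=D_uf(x'+su)\le\sigma(x'+su,u)$ along segments and passing to the limit through upper semicontinuity of $\sigma(\cdot,u)$. The paper's (suppressed) detailed version instead first shows $\widehat\partial f(x)^T\subset(\cocl\Psi)(x)$ by a separation/projection argument and then closes up. Your route is slightly more direct and makes the role of the semismoothness estimate along a single direction transparent; the paper's route has the minor advantage of yielding the intermediate inclusion for the Fr\'echet subdifferential.

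For the measure-zero part of (ii), your argument---showing that the width $w_u$ has limit $0$ at every point along each line parallel to $u$, hence vanishes off a countable set on each such line, and then invoking Fubini over a countable dense set of directions---is a clean self-contained substitute for the citation to Norkin's theory. The strict-differentiability conclusion via Lemma~\ref{LemStrictDiff} (singleton $\onabla f(x)$) is equivalent to the paper's appeal to \cite[Proposition 2.2.4]{Cla83} (singleton $\partial^c f(x)$). As you note yourself, the measure-zero claim could alternatively be quoted from \cite{No80,MiGuNo87}, which is precisely what the paper does.
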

\begin{proof}
  ad (i): The assertions have been shown in \cite[Theorems 1.1, 1.10]{MiGuNo87} for the special case $\Omega=\R^n$ and it is an easy task to carry over the proofs to general open sets $\Omega$.

  ad (ii): In order to show that the set $\Omega\setminus\Omega_\Psi$ is negligible, one can take over the proof of  \cite[Theorem 1.12]{MiGuNo87} to show that for every compact cube $K(x,\delta):=\{x'\in\R^n\mv\max_i\vert x'_i-x_i\vert\leq\delta\}\subset \Omega$ the set $K(x,\delta)\setminus\Omega_\Psi$ is negligible. Consider the countable collection $K(x_1,\delta_1), K(x_2,\delta_2),\ldots$ of cubes contained in $\Omega$ with rational $x_i$ and rational $\delta_i$. This collection covers $\Omega$ and therefore the set $\Omega\setminus\Omega_\Psi$ is negligible as the countable union of the negligible sets $K(x_i,\delta_i)\setminus \Omega_\Psi$.

  For every $x\in\Omega_\Psi$ we conclude from (i) that $\partial^cf(x)=(\cocl \Psi)(x)=\Psi(x)$ is a singleton and thus $f$ is strictly differentiable at $x$ with $\partial^c f(x)=\{\nabla f(x)\}$ by \cite[Proposition 2.2.4]{Cla83}.
  \end{proof}
\if{
\begin{proposition}\label{PropNorkin}
  Let $\Omega\subset\R^n$ be open and let $f:\Omega\to\R$ be semismooth on $\Omega$ with respect to the mapping $\Psi:\R^n\tto\R^{1\times n}$. Then the following statements hold true.
  \begin{enumerate}
    \item[(i)] For every $x\in\Omega$  there holds $\widehat\partial f(x)^T\subset(\cocl\Psi)(x)$.
    \item[(ii)] Let  $x,x'\in\Omega$ be such that the line segment $[x,x']$ connecting $x$ and $x'$ is contained in $\Omega$. Then there exists $\bar t\in (0,1)$ and $g\in (\cocl\Psi)\big(x+\bar t(x'-x)\big)$ with $f(x')-f(x)=g(x'-x)$.
    \item[(iii)] $f$ is locally Lipschitz on $\Omega$ and for $x\in\Omega$ there holds $\lip f(x)\leq\bnd\Psi(x)$.
    \item[(iv)] For all $x\in\Omega$ there holds $\emptyset\not=\partial f(x)^T\subset\partial^cf(x)^T\subset (\cocl\Psi)(x)$.
    \item[(v)] Let $\Omega_\Psi$ denote the set of all $x\in\Omega$ such that $(\cocl\Psi)(x)$ is a singleton. Then $\Omega\setminus\Omega_\Psi$ has Lebesgue measure zero. Moreover, the function $f$ is strictly differentiable at every $x\in\Omega_\Psi$ with $\{\nabla f(x)\}=(\cocl\Psi)(x)$.
  \end{enumerate}
\end{proposition}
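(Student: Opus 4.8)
The plan is to follow the pattern of Norkin's theory of generalized differentiable functions, cf. \cite{No78,No80,MiGuNo87}, splitting the proof into three parts: local Lipschitz continuity, the Clarke subdifferential inclusion, and the almost-everywhere statement. Throughout I would use that, since $f$ is $\Psi$-ss on $\Omega$, the function $\bnd\Psi$ is finite and usc on $\Omega$ (hence bounded on compact subsets), that $\dom\Psi\supset\Omega$, and that $\norm A\le\bnd\Psi(x)$ for every $A\in\Psi(x)$. For \emph{local Lipschitz continuity}, fix $\xb\in\Omega$ and a closed ball $\B_r(\xb)\subset\Omega$, put $M:=\max_{x\in\B_r(\xb)}\bnd\Psi(x)<\infty$, and fix $\epsilon>0$. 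The $\Psi$-ss property of $f$ at an arbitrary $x\in\inn\B_r(\xb)$ yields a radius $\rho_x>0$ with $\norm{f(y)-f(x)}\le(\norm A+\epsilon)\norm{y-x}\le(M+\epsilon)\norm{y-x}$ for all $y\in\B_{\rho_x}(x)$ and all $A\in\Psi(y)$. Thus $f$ is locally Lipschitz with modulus $M+\epsilon$ near every point of $\inn\B_r(\xb)$, and a standard chaining argument along the segment joining two points of the convex set $\B_r(\xb)$ upgrades this to $\norm{f(x)-f(x')}\le(M+\epsilon)\norm{x-x'}$ on $\B_r(\xb)$. Letting $\epsilon\downarrow0$ and $r\downarrow0$ and using usc of $\bnd\Psi$ gives the local Lipschitz property together with $\lip f(\xb)\le\bnd\Psi(\xb)$.

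For the inclusion $\partial^cf(x)^T\subset(\cocl\Psi)(x)$, since $\partial^cf(x)=\co\,\onabla f(x)^T$ and $(\cocl\Psi)(x)=\co\,(\cl\Psi)(x)$ is convex and compact, it suffices to show $\onabla f(x)\subset(\cocl\Psi)(x)$, and by the support-function characterisation of compact convex sets it is in turn enough to exhibit, for each $A_0\in\onabla f(x)$ and each $d\in\R^n$, some $\bar B\in(\cl\Psi)(x)$ with $\bar B d=A_0 d$. Write $A_0=\lim_j\nabla f(\xi_j)$ with $\xi_j\longsetto{\OO_f}x$. At each $\xi_j$, Fr\'echet differentiability of $f$ combined with the $\Psi$-ss property of $f$ at $\xi_j$ gives, for every $\eta>0$, a radius $\sigma>0$ such that $\vert(B-\nabla f(\xi_j))(y-\xi_j)\vert\le\eta\norm{y-\xi_j}$ whenever $y\in\B_\sigma(\xi_j)$ and $B\in\Psi(y)$. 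Choosing $\eta=1/j$, a scalar $t_j>0$ small enough that $z_j:=\xi_j+t_j d$ lies in the corresponding ball and $z_j\to x$, and some $B_j\in\Psi(z_j)$ (nonempty since $z_j\in\Omega$), one obtains $\vert(B_j-\nabla f(\xi_j))d\vert\le\norm d/j$. Since $\Psi$ is locally bounded at $x$, the $B_j$ are bounded for $j$ large, so a subsequence converges to some $\bar B$, which lies in $(\cl\Psi)(x)$ and satisfies $\bar B d=\lim_j\nabla f(\xi_j)d=A_0d$. Nonemptiness of $\onabla f(x)$, and hence of $\partial^cf(x)$, is guaranteed by Rademacher's theorem via the first part.

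For the almost-everywhere statement, the strict-differentiability claim is easy: at any $x\in\Omega_\Psi$ the previous part gives $\partial^cf(x)^T\subset(\cocl\Psi)(x)=\{A\}$, so $\partial^cf(x)$ is a singleton, whence $f$ is strictly differentiable at $x$ with $\partial^cf(x)=\{\nabla f(x)^T\}$ by \cite[Proposition~2.2.4]{Cla83}; then $A=\nabla f(x)$, and since $\emptyset\neq\Psi(x)\subset(\cl\Psi)(x)\subset(\cocl\Psi)(x)=\{A\}$ one also gets $\Psi(x)=(\cl\Psi)(x)=(\cocl\Psi)(x)=\{\nabla f(x)\}$. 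The substantive part is to show that $\Omega\setminus\Omega_\Psi$ is Lebesgue-null, and here I would reduce to proving that $K(x,\delta)\setminus\Omega_\Psi$ is negligible for every compact cube $K(x,\delta)\subset\Omega$ and then cover $\Omega$ by the countably many such cubes with rational centre and side, taking the countable union. \textbf{This step is the main obstacle}: it does \emph{not} follow from Rademacher's theorem alone, since for a merely locally Lipschitz $f$ both $\onabla f$ and $\cocl\Psi$ can be non-singleton on a set of positive measure; one must genuinely invoke Norkin's measure-theoretic argument \cite[Theorem~1.12]{MiGuNo87} — which exploits the one-dimensional behaviour of $f$ along lines together with the semismoothness estimate — and verify that it carries over from $\R^n$ to an arbitrary open $\Omega$. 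Once this is done, any companion assertions such as a Lebourg-type mean value property (apply Lebourg's mean value theorem and the Clarke chain rule to $t\mapsto f(x+t(x'-x))$ together with the second part) or $\emptyset\neq\partial f(x)^T\subset\partial^cf(x)^T$ (standard, via $\partial^cf=\co\,\partial f$, cf. \cite[Theorem~9.61]{RoWe98}) follow routinely.
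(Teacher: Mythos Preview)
Your proposal is correct, but the route differs from the paper's in two substantive respects.

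First, the order and mechanism for local Lipschitzness: the paper proves the Fr\'echet-level inclusion $\widehat\partial f(x)^T\subset(\cocl\Psi)(x)$ \emph{first}, by a projection/separation contradiction (project an alleged $g\notin(\cocl\Psi)(x)$ onto that convex compact set and test along the direction $g-\bar g$), then derives the mean-value formula (ii) via a Rolle argument for $t\mapsto f(x+t(x'-x))-f(x)-t(f(x')-f(x))$ using the semismooth chain- and sum-rules, and obtains (iii) as a consequence of (ii). You instead establish (iii) directly by the ``uniform calmness $\Rightarrow$ Lipschitz on segments'' chaining, which is legitimate and avoids any subdifferential machinery at that stage.

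Second, for the key inclusion you bypass the Fr\'echet subdifferential entirely and go straight to $\onabla f(x)\subset(\cocl\Psi)(x)$ via a directional limit argument at differentiability points $\xi_j$ plus the support-function characterisation of the compact convex set $(\cocl\Psi)(x)$; the paper instead proves (i) and lifts it to (iv) through $\partial f=\cl\widehat\partial f$ and $\partial^c f=\co\partial f$. Your route needs Rademacher (hence Lipschitzness first), whereas the paper's separation argument for (i) does not; conversely, your approach lets you treat (i), (ii) and the remaining parts of (iv) as corollaries (Lebourg's mean-value theorem plus the Clarke inclusion), while the paper reproves the mean-value statement from scratch. For (v), both approaches defer the measure-zero step to Norkin's argument \cite[Theorem~1.12]{MiGuNo87} and handle strict differentiability identically via \cite[Proposition~2.2.4]{Cla83}.
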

\begin{proof}
  ad (i): Assume on the contrary that there exist $x\in\Omega$ and $g^T\in\widehat\partial f(x)$ with $g\not\in (\cocl\Psi)(x)$ and consider the projection $\bar g$ of $g$ onto the convex compact set $(\cocl\Psi)(x)$, which must be different from $g$. Pick any sequence $t_k\downarrow 0$ and consider the points $x_k:=x+t_k(g-\bar g)^T$. For sufficiently large $k$ we have $x_k\in\Omega$ and we can pick some $g_k\in\Psi(x_k)$. Then the sequence $g_k$ is bounded and, by possibly passing to a subsequence, we can assume that $g_k$ converges to some $\hat g$. Clearly, $\hat g\in(\cl\Psi)(x)\subset(\cocl\Psi)(x)$ and therefore $(g-\bar g)(\hat g-\bar g)^T\leq 0$.  Since $f$ is $\Psi$-ss on $\Omega$, we obtain that
  \[\lim_{k\to\infty}\frac{f(x_k)-f(x)-g_k(x_k-x)}{\norm{x_k-x}}=0.\]
  Further we have
  \[\liminf_{k\to\infty}\frac{f(x_k)-f(x)-g(x_k-x)}{\norm{x_k-x}}\geq 0\]
  due to $g^T\in\widehat\partial f(x)$ and consequently
  \[0\leq\liminf_{k\to\infty}\frac{(g_k-g)(x_k-x)}{\norm{x_k-x}}=\frac{(\hat g -g)(g-\bar g)^T}{\norm{g-\bar g}}=\frac{(\hat g -\bar g)(g-\bar g)^T}{\norm{g-\bar g}}-\norm{g-\bar g}<0,\]
  a contradiction.

  ad(ii):  By Lemma \ref{LemSSCalm}, $f$ is calm at every $x\in\Omega$ and therefore continuous. Let $\I:=\{t\in\R\mv x+t(x'-x)\in\Omega\}\supset[0,1]$ and consider the function $\varphi:\I\to\R$, $\varphi(t):=f(x+t(x'-x))-f(x)-t(f(x')-f(x))$. Owing to Lemmas \ref{LemChainRule}, \ref{LemSumRule}, $\varphi$ is semismooth on $\I$ with respect to the mapping $\Phi(t):=\Psi(x+t(x'-x))(x'-x)-(f(x')-f(x))$, $t\in \I $. There holds $\varphi(0)=\varphi(1)=0$ and, since $\varphi$ is continuous on $[0,1]$, there exists $\bar t\in(0,1)$ being either a global minimizer or a global maximizer of $\varphi$ on $[0,1]$. We may assume that $\bar t$ is a local minimizer, since otherwise we can interchange $x$ and $x'$ to obtain a global minimizer $(1-\bar t)$ for the function $-\varphi(1-t)$. Then
  \[0\in\widehat\partial \varphi(\bar t) \subset (\cocl \Phi)(\bar t)=(\cocl \Psi)(x+\bar t(x'-x))(x'-x)-(f(x')-f(x))\]
 yielding the assertion.

  ad(iii): This is an immediate consequence of (ii).

  ad(iv): Since $f$ is locally Lipschitz on $\Omega$,  for every $x\in\Omega$ there holds $\partial f(x)=(\cl\widehat\partial f)(x)$ by definition of $\partial f$ and
  $\partial^cf(x)=\co \partial f(x)$ by \cite[Theorem 9.61]{RoWe98}. This yields, together with (i),
  \[\partial f(x)\subset\partial^c f(x)=(\cocl \widehat\partial f)(x)\subset\{g^T\mv g\in(\cocl(\cocl\Psi))(x)\}=\{g^T\mv g\in(\cocl\Psi)(x)\}.\]

  ad (v): In order to show that the set $\Omega\setminus\Omega_\Psi$ is negligible, one can take over the proof of  \cite[Theorem 1.12]{MiGuNo87} to show that for every compact cube $K(x,\delta):=\{x'\in\R^n\mv\max_i\vert x'_i-x_i\vert\leq\delta\}\subset \Omega$ the set $K(x,\delta)\setminus\Omega_\Psi$ is negligible. Consider the countable collection $K(x_1,\delta_1), K(x_2,\delta_2),\ldots$ of cubes contained in $\Omega$ with rational $x_i$ and rational $\delta_i$. This collection covers $\Omega$ and therefore the set $\Omega\setminus\Omega_\Psi$ is negligible as the countable union of the negligible sets $K(x_i,\delta_i)\setminus \Omega_\Psi$.

  For every $x\in\Omega_\Psi$ we conclude from (iv) that $\partial^cf(x)$ is a singleton and thus $f$ is strictly differentiable at $x$ with $\nabla f(x)=\{\partial^c f(x)\}$ by \cite[Proposition 2.2.4]{Cla83}. Since strict differentiability of $f$ is equivalent with continuity of $\nabla f$ relative to $\OO_f$, cf. \cite[Exercise 9.64]{RoWe98}, $\nabla f$ is continuous on $\Omega_\Psi$.
  \end{proof}
}\fi
\if{
It is not difficult to extend these results to vector-valued mappings.
}\fi
We are now in the position to state the main result of this section.
\begin{theorem}\label{ThNorkinR_m}
  Let $\Omega\subset\R^n$ be open and assume that $F:\Omega\to\R^m$ is semismooth on $\Omega$ with respect to a mapping $\Psi:\R^n\tto\R^{m\times n}$. Then the following statements hold:
  \begin{enumerate}
    \item[(i)] $F$ is locally Lipschitz on $\Omega$.
    \item[(ii)] For every $x\in\Omega$ there holds $\emptyset\not=\co\onabla F(x)\subset (\cocl\Psi)(x)$.
    \item[(iii)] Let $\Omega_\Psi$ denote the set of all $x\in\Omega$ such that $(\cocl\Psi)(x)$ is a singleton. Then $\Omega\setminus\Omega_\Psi$ has Lebesgue measure zero. Moreover,  the mapping $F$ is strictly differentiable at every $x\in \Omega_\Psi$ with $\co\onabla F(x)=\{\nabla F(x)\}=(\cocl\Psi)(x)=\Psi(x)$.
  \end{enumerate}
\end{theorem}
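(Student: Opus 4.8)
The plan is to reduce (i) and (iii) to the scalar case by applying Proposition~\ref{PropNorkin} to the components $F_i$, and to obtain the inclusion in (ii) -- the only genuinely vector‑valued point -- from strict differentiability of $F$ on a full‑measure set. I would work with the component mappings $\Psi_i$ of \eqref{EqPsi_ssComponent}, for which every $F_i$ is $\Psi_i$-ss on $\Omega$. Since $\Psi$ is locally bounded at each $x\in\Omega$ (this is part of $F$ being $\Psi$-ss at $x$), a short argument using this boundedness gives $(\cl\Psi_i)(x)=\{e_i^T B\mv B\in(\cl\Psi)(x)\}$ and hence $(\cocl\Psi_i)(x)=\{e_i^T B\mv B\in(\cocl\Psi)(x)\}$; in particular $(\cocl\Psi)(x)\subseteq\{A\mv e_i^T A\in(\cocl\Psi_i)(x),\ i=1,\dots,m\}$, with equality whenever the right‑hand side is a singleton (recall $(\cocl\Psi)(x)\neq\emptyset$ since $x\in\dom\Psi$). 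Statement (i) is then immediate from Proposition~\ref{PropNorkin}(i), which makes each $F_i$, and therefore $F$, locally Lipschitz on $\Omega$; this also yields $\emptyset\neq\onabla F(x)$ for every $x$ (Rademacher's theorem together with local boundedness of the Jacobians, as in the preliminaries).

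For (iii), write $\Omega_{\Psi_i}$ for the set where $(\cocl\Psi_i)(\cdot)$ is a singleton. The relations above give $\Omega_\Psi=\bigcap_{i=1}^m\Omega_{\Psi_i}$, so $\Omega\setminus\Omega_\Psi=\bigcup_i(\Omega\setminus\Omega_{\Psi_i})$ is Lebesgue null by Proposition~\ref{PropNorkin}(ii). Fixing $x\in\Omega_\Psi$, we have $x\in\Omega_{\Psi_i}$ for all $i$, so each $F_i$ is strictly differentiable at $x$ with $\{\nabla F_i(x)\}=(\cocl\Psi_i)(x)=\Psi_i(x)$ by Proposition~\ref{PropNorkin}(ii). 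Stacking the components, exactly as in the step (ii)$\Rightarrow$(i) of the proof of Lemma~\ref{LemStrictDiff}, shows that $F$ is strictly differentiable at $x$, whence $\onabla F(x)=\{\nabla F(x)\}$ by Lemma~\ref{LemStrictDiff}. Since $e_i^T\nabla F(x)=\nabla F_i(x)$ is the unique element of $(\cocl\Psi_i)(x)=\{e_i^T B\mv B\in(\cocl\Psi)(x)\}$ for each $i$, and $\Psi(x)$, $(\cocl\Psi)(x)$ are nonempty, this forces $(\cocl\Psi)(x)=\Psi(x)=\{\nabla F(x)\}$, so $\co\onabla F(x)=\{\nabla F(x)\}=(\cocl\Psi)(x)=\Psi(x)$, which is the remaining assertion of (iii).

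There remains the inclusion $\co\onabla F(x)\subseteq(\cocl\Psi)(x)$ in (ii) for arbitrary $x\in\Omega$. For $y\in\Omega_\Psi$ the value $\nabla F(y)$ is the unique element of $(\cl\Psi)(y)$, i.e.\ $(y,\nabla F(y))\in\gph(\cl\Psi)$, and since $\gph(\cl\Psi)$ is closed it follows that $\Limsup_{y\longsetto{\Omega_\Psi}x}\{\nabla F(y)\}\subseteq(\cl\Psi)(x)$, hence $\co\big(\Limsup_{y\longsetto{\Omega_\Psi}x}\{\nabla F(y)\}\big)\subseteq(\cocl\Psi)(x)$. It then suffices to invoke the fact that the generalized Jacobian of a locally Lipschitz map is insensitive to sets of measure zero: because $F$ is locally Lipschitz (by (i)) and $\OO_F\setminus\Omega_\Psi$ is Lebesgue null (as $\Omega_\Psi\subseteq\OO_F$ and $\Omega\setminus\Omega_\Psi$ is null), one has $\co\onabla F(x)=\co\big(\Limsup_{y\longsetto{\Omega_\Psi}x}\{\nabla F(y)\}\big)$, cf.\ \cite[Theorem~2.5.1 and Section~2.6]{Cla83}. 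Combining the two inclusions gives (ii). I expect precisely this inclusion to be the main obstacle: the componentwise estimates of Proposition~\ref{PropNorkin} only bound the individual rows of matrices in $\co\onabla F(x)$, giving merely $\co\onabla F(x)\subseteq\{A\mv e_i^T A\in(\cocl\Psi_i)(x)\ \forall i\}$, which is in general strictly larger than $(\cocl\Psi)(x)$; closing this gap seems to require the full, non-component-wise $\Psi$-semismoothness, used here indirectly through strict differentiability on $\Omega_\Psi$ and the null-set insensitivity of the generalized Jacobian.
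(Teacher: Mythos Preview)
Your proof is correct and follows essentially the same route as the paper: reduce (i) to the components via Proposition~\ref{PropNorkin}, obtain strict differentiability of $F$ on the full-measure set $\bigcap_i\Omega_{\Psi_i}$, and then derive the inclusion in (ii) from the null-set insensitivity of the generalized Jacobian. The only cosmetic differences are that the paper works with $\tilde\Omega:=\bigcap_i\Omega_{\Psi_i}$ and only notes $\tilde\Omega\subset\Omega_\Psi$ (you prove equality, which is sharper), orders the argument as (ii) then (iii), and cites Warga \cite[Theorem~4]{Wa81} rather than Clarke for the null-set step.
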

\begin{proof}
   ad (i): Each component $F_i$, $i=1,\ldots,m$, is semismooth on $\Omega$ with respect to $\Psi_i$ given by \eqref{EqPsi_ssComponent} and therefore locally Lipschitz on $\Omega$ by Proposition \ref{PropNorkin}. Hence, $F$ is locally Lipschitz on $\Omega$ as well.
   \if{
   By Lemma \ref{LemChainRule} we obtain that each component function $F_i$, $i=1,\ldots,m$, is semismooth on $\Omega$ with respect to the mapping $\Psi_i:\Omega\tto\R^{1\times n}$ given by $\Psi_i(x):=\{e_i^TA\mv A\in \Psi(x)\}$. Hence, the components $F_i$, $i=1,\ldots,m$ are locally Lipschitz on $\Omega$ and so is the mapping $F$ as well.
   }\fi

   ad (ii): Let $\Pi:\R^n\tto\R^{m\times n}$ denote the mapping which assigns to each $x$ the set of all $m\times n$ matrices  whose $i$-th row belongs to $(\cocl\Psi_i)(x)$. Clearly,
   $(\cocl\Psi)(x)\subset \Pi(x)$, $x\in\R^n$. Next let $\tilde \Omega:=\bigcap_{i=i}^m\Omega_{\Psi_i}$. For every $x\in\tilde \Omega$ we obtain from Proposition \ref{PropNorkin}(ii) that for every $i=1,\ldots,m$ the component function  $F_i$ is strictly differentiable at $x$ and  $(\cocl \Psi_i)(x)=\{\nabla F_i(x)\}$ is a singleton. Hence, $F$ is also strictly differentiable at $x$ and $(\cocl \Psi)(x)=\Pi(x)=\{\nabla F(x)\}$. Since the set $\Omega\setminus\tilde\Omega=\bigcup_{i=1}^m(\Omega\setminus\Omega_{\Psi_i})$ is negligible, the set $\tilde \Omega$ differs from the set $\OO_F$, the set of all points where $F$ is differentiable, by a negligible set. By defining the mapping $\nabla_sF(x):=\nabla F(x)$ for $x\in\dom\nabla_sF:=\tilde \Omega$
    and taking into account $\co\onabla F=\cocl \nabla F$, it follows from \cite[Theorem 4]{Wa81} that $\co\onabla F=\cocl \nabla_s F$. Since $\nabla_sF(x)\subset(\cocl \Psi)(x)$ $\forall x\in\Omega$, we obtain $\co\onabla F\subset\big(\cocl(\cocl\Psi)\big)(x)=(\cocl\Psi)(x)$ $\forall x\in\Omega$.

   ad (iii): Since $\tilde \Omega\subset\Omega_\Psi\subset\Omega$ and the set $\Omega\setminus \tilde\Omega$ is negligible, so must be the set $\Omega\setminus\Omega_\Psi$ as well. For every $x\in\Omega_\Psi$ we deduce from (ii) that $\co\onabla F(x)$ is also a singleton and therefore $F$ is strictly differentiable at $x$ by Lemma \ref{LemStrictDiff}.
\end{proof}
\begin{corollary}
  Let $F:\Omega\to\R^m$ be a mapping, where $\Omega\subset\R^n$ is open. Then the following two statements are equivalent and entail that $F$ is strictly differentiable on $\Omega$ up to a set of Lebesgue measure zero.
  \begin{enumerate}
    \item $F$ is semismooth on $\Omega$ with respect to some mapping $\Psi:\R^n\tto\R^{m\times n}$.
    \item $F$ is locally Lipschitz and $\co\onabla F$-ss on $\Omega$, i.e., G-semismooth.
  \end{enumerate}
\end{corollary}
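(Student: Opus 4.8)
The plan is to read off all three claims from Theorem~\ref{ThNorkinR_m} and Lemma~\ref{LemNorkin}, the only extra ingredient being the elementary remark that the $\Psi$-ss property at a point is inherited by any locally bounded submapping of $\Psi$ having the same (full) domain: if $\Psi'(x)\subset\Psi(x)$ for all $x$, then for each $x$ the supremum of $\norm{F(x)-F(\xb)-A(x-\xb)}$ over $A\in\Psi'(x)$ is dominated by the same supremum over $A\in\Psi(x)$, so \eqref{EqGsemismooth} for $\Psi$ forces \eqref{EqGsemismooth} for $\Psi'$.

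First I would dispatch $(2)\Rightarrow(1)$, which is essentially immediate from the definition: if $F$ is locally Lipschitz on the open set $\Omega$, then by Rademacher's theorem $\onabla F(x)\not=\emptyset$ for every $x\in\Omega$, so $\dom(\co\onabla F)=\Omega$ is a neighborhood of each of its points, and the local Lipschitz modulus bounds the Jacobians, hence $\co\onabla F$ is locally bounded. Thus $\Psi:=\co\onabla F$ is an admissible choice in Definition~\ref{DefSS_Psi}, and G-semismoothness of $F$ is by definition precisely the $\co\onabla F$-ss property, i.e.\ statement (1) with this $\Psi$.

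For $(1)\Rightarrow(2)$ I would proceed as follows. Assuming $F$ is $\Psi$-ss on $\Omega$, Theorem~\ref{ThNorkinR_m}(i) gives that $F$ is locally Lipschitz on $\Omega$, hence continuous, and (as in the previous paragraph) $\co\onabla F$ has domain $\Omega$ and is locally bounded. Theorem~\ref{ThNorkinR_m}(ii) supplies the inclusion $\co\onabla F(x)\subset(\cocl\Psi)(x)$ for every $x\in\Omega$. Fix $\xb\in\Omega$. Since $F$ is continuous, Lemma~\ref{LemNorkin} applies and shows that $F$ is $\cocl\Psi$-ss at $\xb$, i.e.
\[\lim_{\AT{x\setto{\Omega}\xb}{x\not=\xb}}\frac{\sup\{\norm{F(x)-F(\xb)-A(x-\xb)}\mv A\in(\cocl\Psi)(x)\}}{\norm{x-\xb}}=0.\]
Because $\co\onabla F(x)\subset(\cocl\Psi)(x)$, the monotonicity remark above lets me replace $(\cocl\Psi)(x)$ by $\co\onabla F(x)$ in this limit; combined with the local boundedness and full domain of $\co\onabla F$, this is exactly the statement that $F$ is $\co\onabla F$-ss at $\xb$. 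As $\xb\in\Omega$ was arbitrary, $F$ is locally Lipschitz and $\co\onabla F$-ss on $\Omega$, which is (2).

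Finally, the concluding assertion that $F$ is strictly differentiable on $\Omega$ up to a Lebesgue-null set is nothing but Theorem~\ref{ThNorkinR_m}(iii), read off for whichever mapping is at hand (the given $\Psi$, or $\co\onabla F$): the exceptional set $\Omega\setminus\Omega_\Psi$ is negligible and $F$ is strictly differentiable at every point of $\Omega_\Psi$. I do not anticipate any genuine obstacle in this corollary; the only place calling for a moment's care is the monotonicity observation, which is what makes the passage from $\cocl\Psi$-ss to $\co\onabla F$-ss automatic once the inclusion from Theorem~\ref{ThNorkinR_m}(ii) is in place.
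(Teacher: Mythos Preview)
Your proposal is correct and follows exactly the route the paper intends: the corollary is stated without proof immediately after Theorem~\ref{ThNorkinR_m}, and your argument simply spells out how (1)$\Rightarrow$(2) follows from Theorem~\ref{ThNorkinR_m}(i),(ii) together with Lemma~\ref{LemNorkin} and the trivial monotonicity of the $\Psi$-ss condition under shrinking $\Psi$, while (2)$\Rightarrow$(1) is tautological and the a.e.\ strict differentiability is Theorem~\ref{ThNorkinR_m}(iii).
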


Let us return to  problem \eqref{EqBasProbl} under the additional assumption that $\Uad$ is a convex polyhedron. The latter condition enables us to solve \eqref{EqBasProbl} numerically via the BT-algorithm which admits only $\Uad$ given by affine linear inequalities and equalities.

Assume that $\theta$ is locally Lipschitz and suppose that at each iteration of the BT-algorithm instead of the oracle \eqref{EqOracle} one uses another
  \begin{equation*}
  \begin{minipage}{10cm}{\em oracle}, which returns for arbitrary $x\in\R^n$ the function value $\theta(x)$ and one element $g(x)$ with $g(x)^T\in\Psi(x)$,\end{minipage}\end{equation*}
  where $\Psi:\R^n\tto\R^{1\times n}$ is a locally bounded mapping with $\dom\Psi$ being an open superset of $\Uad$. Further assume that the condition
\begin{equation}\label{EqWeakSSPsi}\lim_{t\downarrow 0}\frac{\sup\{\vert\theta(x+td)-\theta(x)-t\skalp{g,d}\vert\mv g\in\Psi(x+td)\}}t=0\ \forall x\in\R^n\ \forall d\in\R^n\end{equation}
is fulfilled, which ensures that a certain line-search procedure in the BT-algorithm is finite. Then, as already observed by \cite[Theorem 3.2]{DeBa01}, by replacing $\partial^c \theta$ with $\cocl \Psi$ in the convergence proof of the BT-algorithm, one can show the following result: If the sequence $z_k$ produced by the BT-algorithm remains bounded, we may conclude  that there exists an accumulation point $\zb$ of $z_k$ such that $0\in (\cocl \Psi)(\zb)^T+N_{\Uad}(\zb)$. Further, the above statement remains essentially true for other standard bundle methods.

If $\theta$ is $\Psi$-ss then condition \eqref{EqWeakSSPsi} is clearly fulfilled and the condition $0\in (\cocl \Psi)(\zb)+N_{\Uad}(\zb)$ is in fact a necessary optimality condition for a local minimizer because, by Proposition \ref{PropNorkin}, we have $\partial^c \theta(\zb)\subset(\cocl \Psi)(\zb)$. We claim (without proof) that the latter assertion also holds true under the weaker condition \eqref{EqWeakSSPsi}. Further, condition \eqref{EqWeakSSPsi} may be replaced by any other condition on $\Psi$ guaranteeing the finiteness of the line search procedure in  order  that the above convergence result of the BT-algorithm holds true. For instance, one could also use a condition related to upper semidifferentiability \cite{Bih84}.  These weakened versions of semismoothness, however,  will not be pursued in this paper because they do not satisfy a chain rule and are incompatible with the \ssstar property of multifunctions.

\begin{remark}\label{RemWeaklySS}As already mentioned in the introduction, in the original versions of the BT-algorithm \cite{SZ, Zo89} it is assumed that the objective $\theta$ is  weakly semismooth, which may be characterized by  directional differentiability of $\theta$ and the requirement that \eqref{EqWeakSSPsi} holds with $\Psi=\partial^c\theta$.

\end{remark}

\section{On strict proto-differentiability and the SCD-ss$^*$ property of graphically Lipschitzian mappings}
At first we state the relationship between strict proto-differentiability of a mapping and the property that its SC derivative is a singleton.
\begin{theorem}\label{ThStrictProtoDiff}
  Assume that the mapping $F:\R^n\tto\R^m$ has the SCD property at $(\xb,\yb)\in\gph F$. If $F$ is strictly proto-differentiable at $(\xb,\yb)$ and $\gph F$ is locally closed at $(\xb,\yb)$ then $\Sp F(\xb,\yb)$ is a singleton.
 If, in addition, $F$ is graphically Lipschitzian of dimension $n$ at $(\xb,\yb)$ then the reverse implication holds true as well.
\end{theorem}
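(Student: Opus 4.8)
The plan is to treat the two implications separately, drawing on Lemma \ref{LemStrictProto} for the forward direction and on Lemmas \ref{LemSCDGraphLipsch} and \ref{LemStrictDiff} for the reverse one.

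For the forward direction I would argue as follows. Suppose $F$ is strictly proto-differentiable at $(\xb,\yb)$ and $\gph F$ is locally closed there. Then \eqref{EqTangStrictProto} gives that $T_{\gph F}(\xb,\yb)$ is a subspace and that $T_{\gph F}(x,y)\to T_{\gph F}(\xb,\yb)$ in the Painlev\'e--Kuratowski sense as $(x,y)\longsetto{\gph F}(\xb,\yb)$. On the other hand, by the SCD property $(\xb,\yb)\in\cl\OO_F$, so $\Sp F(\xb,\yb)\neq\emptyset$; for any $L\in\Sp F(\xb,\yb)$ there is a sequence $(x_k,y_k)\longsetto{\OO_F}(\xb,\yb)$ with $T_{\gph F}(x_k,y_k)\to L$ in $\Z_{n,m}$, hence (convergence of the associated orthogonal projections implies set convergence) also in the Painlev\'e--Kuratowski sense. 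Applying the convergence from \eqref{EqTangStrictProto} along this very sequence and invoking uniqueness of Painlev\'e--Kuratowski limits yields $L=T_{\gph F}(\xb,\yb)$, so $\Sp F(\xb,\yb)=\{T_{\gph F}(\xb,\yb)\}$ is a singleton. (As a bonus this even shows $(\xb,\yb)\in\OO_F$.)

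For the reverse direction, assume in addition that $F$ is graphically Lipschitzian of dimension $n$ at $(\xb,\yb)$, and fix $\Phi$ and $f:U\to\R^m$ ($U\subseteq\R^n$ open) as in Definition \ref{DefGraphLip}, with $(\bar u,f(\bar u)):=\Phi(\xb,\yb)$; note that $\gph F$ is then automatically locally closed at $(\xb,\yb)$, since $\gph f$ is closed and $\Phi^{-1}$ is a homeomorphism. By Lemma \ref{LemSCDGraphLipsch} one has $\Sp F(\xb,\yb)=\{\nabla\Phi(\xb,\yb)^{-1}\rge(I,A)\mv A\in\onabla f(\bar u)\}$, and since $A\mapsto\nabla\Phi(\xb,\yb)^{-1}\rge(I,A)$ is injective ($A\mapsto\rge(I,A)$ being injective and $\nabla\Phi(\xb,\yb)^{-1}$ a linear isomorphism), the assumption that $\Sp F(\xb,\yb)$ is a singleton forces $\onabla f(\bar u)=\{A\}$ for a single matrix $A$; Lemma \ref{LemStrictDiff} then gives that $f$ is strictly differentiable at $\bar u$ with $\nabla f(\bar u)=A$. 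To finish I would show, first, that strict differentiability of $f$ at $\bar u$ entails strict proto-differentiability of $f$ at $(\bar u,f(\bar u))$: indeed $T^P_{\gph f}(\bar u,f(\bar u))=\gph D_*f(\bar u)=\rge(I,A)$ by \eqref{EqStrictDeriv_StrictDiff}, while $\widehat T_{\gph f}(\bar u,f(\bar u))\subseteq T^P_{\gph f}(\bar u,f(\bar u))$ always holds and the reverse inclusion $\rge(I,A)\subseteq\widehat T_{\gph f}(\bar u,f(\bar u))$ follows by testing an arbitrary direction $u'$ against arbitrary sequences $(u_k,f(u_k))\longsetto{\gph f}(\bar u,f(\bar u))$, $t_k\downarrow 0$, using the constant difference quotient direction $u'$ and $(f(u_k+t_ku')-f(u_k))/t_k\to Au'$. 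Second, strict proto-differentiability is preserved under the $C^1$ change of coordinates $\Phi$: since $\gph F=\Phi^{-1}(\gph f)$ locally and $\Phi,\Phi^{-1}$ are continuously differentiable, both the paratingent cone $T^P$ and the Clarke tangent cone $\widehat T$ transform covariantly via $\nabla\Phi(\xb,\yb)$ — one applies $\Phi$, resp. $\Phi^{-1}$, to points $z+tw\in\gph F$ and uses continuity of $\nabla\Phi$ near $(\xb,\yb)$ — so from $T^P_{\gph f}=\widehat T_{\gph f}$ at $\Phi(\xb,\yb)$ we obtain $T^P_{\gph F}(\xb,\yb)=\widehat T_{\gph F}(\xb,\yb)$, which is exactly strict proto-differentiability of $F$ at $(\xb,\yb)$.

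I expect the forward direction to be essentially bookkeeping once Lemma \ref{LemStrictProto} is in hand. The main obstacle is in the reverse direction, namely the last step: establishing that strict differentiability of $f$ yields strict proto-differentiability of $f$, and that this property survives the $C^1$ coordinate change $\Phi$. Both are routine but call for some care with difference quotients and with the continuity of $\nabla\Phi$; alternatively one could package them into the single auxiliary fact that for single-valued Lipschitz maps strict proto-differentiability and strict differentiability coincide, and then apply it in $\Phi$-coordinates.
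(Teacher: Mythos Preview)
Your proposal is correct, and for the reverse implication it follows essentially the paper's route: both you and the paper reduce to the equivalence ``$\Sp F(\xb,\yb)$ singleton $\Leftrightarrow$ $\onabla f(\bar u)$ singleton $\Leftrightarrow$ $f$ strictly differentiable at $\bar u$ $\Leftrightarrow$ $f$ strictly proto-differentiable at $(\bar u,f(\bar u))$ $\Leftrightarrow$ $F$ strictly proto-differentiable at $(\xb,\yb)$'', via Lemmas \ref{LemSCDGraphLipsch} and \ref{LemStrictDiff} and the $C^1$ coordinate change. The paper packages the last two equivalences into a single sequence argument, but this is only a cosmetic difference from your two-step version.

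For the forward direction your argument is genuinely different and, in fact, shorter. The paper first observes from \eqref{EqTangStrictProto} that $\bar L:=T_{\gph F}(\xb,\yb)$ contains every $L\in\Sp F(\xb,\yb)$, hence $\bar n:=\dim\bar L\geq n$, and then rules out $\bar n>n$ by a separate linear-independence argument: picking an orthonormal basis of $\bar L$, approximating it inside nearby $n$-dimensional tangent spaces $T_{\gph F}(z_k)$, and deriving a contradiction with $\dim T_{\gph F}(z_k)=n$. You bypass this entirely by noting that \eqref{EqTangStrictProto} gives Painlev\'e--Kuratowski convergence of the whole family $T_{\gph F}(x,y)$ to $T_{\gph F}(\xb,\yb)$, hence also along the subsequence from $\OO_F$ defining $L$; since $\Z_{n,m}$-convergence implies Painlev\'e--Kuratowski convergence and the latter has unique limits, $L=T_{\gph F}(\xb,\yb)$ follows at once. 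This is cleaner and yields the same bonus conclusion $(\xb,\yb)\in\OO_F$ without the dimension-counting detour.
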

\begin{proof}
Assume that $F$ is strictly proto-differentiable, then we know by Lemma \ref{LemStrictProto} that $\bar L:=T_{\gph F}(\xb,\yb)$ is a subspace. Further, from \eqref{EqTangStrictProto} we conclude that
\[\bar L\supset\Limsup_{(x,y)\longsetto{\OO_F}(\xb,\yb)}T_{\gph F}(x,y)\supset\bigcup_{L\in\Sp F(\xb,\yb)} L\not=\emptyset.\]
Hence, $\bar n:=\dim \bar L\geq n$ and if $\bar n=n$ then $\Sp F(\xb,\yb)=\{\bar L\}$ follows. Now assume that $\bar n>n$ and choose an orthogonal basis $w_i$, $i=1,\ldots,\bar n$, of $\bar L$. Since $\zb:=(\xb,\yb)\in(\cl \OO_F)\setminus \OO_F$, there must be a sequence $z_k:=(x_k,y_k)\longsetto{\OO_F}(\xb,\yb)$ satisfying $z_k\not =\zb$ $\forall k$. Taking into account \eqref{EqTangStrictProto}, for every $k$ sufficiently large and every $i=1,\ldots,\bar n$, we can find elements $w_i^k\in L_k:=T_{\gph F}(z_k)$ with $\norm{w_i^k-w_i}<1/\sqrt{\bar n}$. We claim now that for those $k$ the vectors $w_i^k$, $i=1,\ldots,\bar n$, are linearly independent. Assume on the contrary, that for some large $k$ these vectors are linearly dependent and we can find reals $\lambda_i^k$, $i=1,\ldots,\bar n$, with
$\sum_{i=1}^{\bar n}\lambda_i^k w_i^k=0$ and $\sum_{i=1}^{\bar n}(\lambda_i^k)^2=1.$
Then by orthogonality of $w_i$, $i=1,\ldots,\bar n$, together with the Cauchy-Schwarz inequality  we obtain that
\begin{align*}1&=\norm{\sum_{i=1}^{\bar n}\lambda_i^kw_i}\leq \norm{\sum_{i=1}^{\bar n}\lambda_i^kw_i^k}+\norm{\sum_{i=1}^{\bar n}\lambda_i^k(w_i-w_i^k)}\\
&\leq\Big(\sum_{i=1}^{\bar n}(\lambda_i^k)^2\Big)^{1/2}\Big(\sum_{i=1}^{\bar n}\norm{w_i-w_i^k}^2\Big)^{1/2}<1\cdot(\sum_{i=1}^{\bar n}\frac 1{\bar n})^{1/2}=1,
\end{align*}
a contradiction. Thus $w_i^k$, $i=1,\ldots,\bar n$, are linearly independent and $\dim L_k\geq \bar n$ follows. However, by definition of the set $\OO_F$ we have $\dim L_k=n$, a contradiction. Hence $\bar n=n$ and $\Sp F(\xb,\yb)=\{\bar L\}$ holds true.

To show the second assertion, assume that $F$ is graphically Lipschitzian of dimension $n$ at $(\xb,\yb)$ and let $W$, $\Phi$, $U$ and $f$ be as in Definition \ref{DefGraphLip}. Note that for all sequences $t_k\downarrow0$, $(x_k,y_k)\longsetto{\gph F\cap W}(\xb,\yb)$ and $(\hat x_k,\hat y_k)\longsetto{\gph F\cap W}(\xb,\yb)$ such that
$\lim_{k\to\infty}{((\hat x_k,\hat y_k)-(x_k,y_k))}/{t_k}=:(\xi,\eta)$
exists, we have that $\Phi(x_k,y_k)=:(u_k,f(u_k))\longsetto{\gph f}(\bar u,f(\bar u))=\Phi(\xb,\yb)$, $\Phi(\hat x_k,\hat y_k)=:(\hat u_k,f(\hat u_k))\longsetto{\gph f}(\bar u,f(\bar u))$ and
\begin{align*}
  \lim_{k\to\infty}\frac{(\hat u_k,f(\hat u_k))-(u_k, f(u_k))}{t_k}&=\lim_{k\to\infty}\frac{\nabla \Phi(\xb,\yb)\big((\hat x_k,\hat y_k)-(x_k,y_k)\big)+\oo(\norm{(\hat x_k,\hat y_k)-(x_k,y_k)})}{t_k}\\
  &=\nabla \Phi(\xb,\yb)(\xi,\eta).
\end{align*}
Analogously, for all sequences $t_k\downarrow 0$, $u_k\longsetto{U}\bar u$, $\hat u_k\longsetto{U}\bar u$ such that
\[\lim_{k\to\infty}\frac{(\hat u_k,f(\hat u_k))-(u_k,f(u_k))}{t_k}=:(\sigma,\tau)\]
exists, we have that $\Phi^{-1}(u_k,f(u_k))=:(x_k,y_k)\longsetto{\gph F\cap W}(\xb,\yb)$, $\Phi^{-1}(\hat u_k,f(\hat u_k))=:(\hat x_k,\hat y_k)\longsetto{\gph F\cap W}(\xb,\yb)$ and
\[\lim_{k\to\infty}\frac{(\hat x_k,\hat y_k)-(x_k,y_k)}{t_k}=\nabla\Phi(\xb,\yb)^{-1}(\sigma,\tau).\]
Thus, $F$ is strictly proto-differentiable at $(\xb,\yb)$ if and only if $f$ is strictly proto-differentiable at $(\bar u,f(\bar u))$, which is the same as $f$ being strictly differentiable at $\bar u$. Now, by Lemma \ref{LemSCDGraphLipsch} the SC derivative $\Sp F(\xb,\yb)$ is a singleton  if and only if $\overline \nabla f(\bar u)$ is a singleton, which is equivalent to strict differentiability of $f$ at $\bar u$. Combining these statements it follows that $F$ is strictly proto-differentiable at $(\xb,\yb)$ if and only if $\Sp F(\xb,\yb)$ is a singleton.
\end{proof}
For graphically Lipschitzian mappings, the SCD-ss$^*$ property is closely related with the G-semi\-smooth\-ness.
\begin{proposition}\label{PropSCDss-ss}
  Assume that the mapping $F:\R^n\tto\R^m$ is graphically Lipschitzian of dimension $n$ at $(\xb,\yb)\in\gph F$ and let $\Phi$ and $f$ be as in Definition \ref{DefGraphLip}. Then $F$ is SCD-ss$^*$ at $(\xb,\yb)$ if and only if $f$ is $\onabla f$-semismooth at $\bar u$, where $(\bar u,f(\bar u))=\Phi(\bar x,\bar y)$.
\end{proposition}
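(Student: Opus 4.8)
The plan is to transport both properties through the chart $\Phi$ and to compare them via the explicit formula for $\Sp F$ in Lemma \ref{LemSCDGraphLipsch}. First I would note that, graphical Lipschitzness of dimension $n$ being a local property, $F$ is graphically Lipschitzian of dimension $n$ at every $(x,y)\in\gph F$ close enough to $(\xb,\yb)$, with chart $\Phi$ restricted to a smaller neighborhood and Lipschitz mapping $f$ near the corresponding point $u$ given by $(u,f(u))=\Phi(x,y)$; in particular $F$ has the SCD property around $(\xb,\yb)$ by Lemma \ref{LemSCDGraphLipsch}, so Proposition \ref{PropSCD_ss_Primal} applies and $F$ is SCD-ss$^*$ at $(\xb,\yb)$ if and only if
\[\limsup_{(x,y)\longsetto{\gph F}(\xb,\yb)}\ \sup_{L\in\Sp F(x,y)}\frac{\dist{(x,y)-(\xb,\yb),L}}{\norm{(x,y)-(\xb,\yb)}}=0.\]
On the other hand, $f$ being Lipschitz, $\onabla f$ is locally bounded at $\bar u$ and $\dom\onabla f$ is a neighborhood of $\bar u$, so by Definition \ref{DefSS_Psi} the mapping $f$ is $\onabla f$-ss at $\bar u$ precisely when
\[\lim_{\AT{u\to\bar u}{u\neq\bar u}}\ \sup_{A\in\onabla f(u)}\frac{\norm{f(u)-f(\bar u)-A(u-\bar u)}}{\norm{u-\bar u}}=0.\]
It therefore suffices to show that these two limiting quantities vanish simultaneously.

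Next I would set $M:=\nabla\Phi(\xb,\yb)$ (an invertible matrix) and fix a Lipschitz constant $\ell$ of $f$ near $\bar u$. For $(x,y)\in\gph F$ near $(\xb,\yb)$, with $(u,f(u))=\Phi(x,y)$, the correspondence $(x,y)\leftrightarrow u$ is a homeomorphism under which $\norm{(x,y)-(\xb,\yb)}$ and $\norm{u-\bar u}$ are comparable up to fixed positive constants (bi-Lipschitzness of $\Phi,\Phi^{-1}$ and Lipschitzness of $f$), and Fr\'echet differentiability of $\Phi$ yields
\[M\big((x,y)-(\xb,\yb)\big)=\big(u-\bar u,\,f(u)-f(\bar u)\big)+r(x,y),\qquad \norm{r(x,y)}=\oo\big(\norm{(x,y)-(\xb,\yb)}\big).\]
By Lemma \ref{LemSCDGraphLipsch} at $(x,y)$, the members of $\Sp F(x,y)$ are exactly the subspaces $L=\nabla\Phi(x,y)^{-1}\rge(I,A)$ with $A\in\onabla f(u)$. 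The core of the argument is then a chain of four elementary estimates, each uniform over $A\in\onabla f(u)$: (a) $\nabla\Phi(x,y)^{-1}\rge(I,A)$ and $M^{-1}\rge(I,A)$ differ in the metric of $\Z_{n,m}$ by some $\eta(x,y)\to 0$ (continuity of $\nabla\Phi$, of matrix inversion, and of $(T,A)\mapsto T\,\rge(I,A)$ on the relevant compact set, $\onabla f$ being bounded near $\bar u$), so the distances of $(x,y)-(\xb,\yb)$ to the two subspaces differ by at most $\eta(x,y)\norm{(x,y)-(\xb,\yb)}$; (b) $\frac1{\norm{M}}\dist{w,L}\le\dist{M^{-1}w,M^{-1}L}\le\norm{M^{-1}}\dist{w,L}$, applied with $w=M\big((x,y)-(\xb,\yb)\big)$ and $L=\rge(I,A)$; (c) the $1$-Lipschitzness of the distance function turns the perturbation $r(x,y)$ into an additive $\oo\big(\norm{(x,y)-(\xb,\yb)}\big)$ error; (d) $\frac1{1+\ell}\norm{q-Ap}\le\dist{(p,q),\rge(I,A)}\le\norm{q-Ap}$, the upper bound by testing the point $(p,Ap)$ and the lower bound from $q-Ap=(q-A\tilde w)+A(\tilde w-p)$ for the projection $(\tilde w,A\tilde w)$ of $(p,q)$ onto $\rge(I,A)$, together with $\norm{A}\le\ell$.

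Concatenating (a)--(d) with $p=u-\bar u$, $q=f(u)-f(\bar u)$ gives, uniformly over $A\in\onabla f(u)$ and for $(x,y)$ in a small neighborhood of $(\xb,\yb)$,
\[\frac{\norm{f(u)-f(\bar u)-A(u-\bar u)}}{\norm{M}(1+\ell)}-\oo\big(\norm{(x,y)-(\xb,\yb)}\big)\ \le\ \dist{(x,y)-(\xb,\yb),L}\]
together with the symmetric upper bound $\dist{(x,y)-(\xb,\yb),L}\le\norm{M^{-1}}\norm{f(u)-f(\bar u)-A(u-\bar u)}+\oo\big(\norm{(x,y)-(\xb,\yb)}\big)$. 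Taking the supremum over $A\in\onabla f(u)$ (equivalently over $L\in\Sp F(x,y)$), dividing by $\norm{(x,y)-(\xb,\yb)}$ (which is comparable to $\norm{u-\bar u}$) and passing to the $\limsup$ as $(x,y)\longsetto{\gph F}(\xb,\yb)$ -- the same as $u\to\bar u$, $u\neq\bar u$ -- makes the $\oo$-terms disappear, and the two-sided bound shows that the two limiting quantities of the first paragraph are zero simultaneously; by the reformulations there, this is exactly the asserted equivalence. I expect the only genuinely delicate point to be the bookkeeping in the middle step: letting the multiplicative constants and the additive $\oo\big(\norm{(x,y)-(\xb,\yb)}\big)$ errors coexist cleanly, and verifying that each of (a)--(d) is uniform over the relevant bounded family of B-Jacobians. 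The passage between $\gph F$ and $\gph f$ is the same dictionary already used in the proof of Theorem \ref{ThStrictProtoDiff}.
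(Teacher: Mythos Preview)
Your proposal is correct and follows essentially the same route as the paper: reformulate the SCD-ss$^*$ property via Proposition~\ref{PropSCD_ss_Primal}, identify $\Sp F(x,y)$ with $\{\nabla\Phi(x,y)^{-1}\rge(I,A)\mid A\in\onabla f(u)\}$ through Lemma~\ref{LemSCDGraphLipsch}, and then show that $\dist{(x,y)-(\xb,\yb),L}$ and $\norm{f(u)-f(\bar u)-A(u-\bar u)}$ are comparable up to multiplicative constants and additive $\oo$-terms. The only cosmetic difference is that the paper works throughout with $\nabla\Phi(z)$ at the variable point $z=(x,y)$ and absorbs the first-order expansion error directly, whereas you freeze $M=\nabla\Phi(\xb,\yb)$ and add the extra perturbation step (a); both lead to the same two-sided estimate.
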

\begin{proof}
Consider $z:=(x,y)\in\gph F\cap W$ and the corresponding pair $(u,f(u))=\Phi(z)$, where $W$ is as in Definition \ref{DefGraphLip}. Let $L\in\Sp F(z)$ and $A\in\onabla f(u)$ such that $L=\nabla\Phi(z)^{-1}(\rge(I,A))$ according to Lemma \ref{LemSCDGraphLipsch}. Then, setting $\zb:=(\xb,\yb)$, we obtain that
\begin{align*}\dist{z-\zb,L}&=\inf_{p\in\R^n}\norm{\nabla \Phi(z)^{-1}\big(\nabla\Phi(z)(z-\zb)-(p,Ap)\big)}\\
&=\inf_{p\in\R^n}\norm{\nabla \Phi(z)^{-1}\big(\Phi(z)-\Phi(\zb)+\oo(\norm{z-\zb})-(p,Ap)\big)}\\
&\leq\norm{\nabla \Phi(z)^{-1}}\big(\inf_{p\in\R^n}\norm{(u-\bar u-p,f(u)-f(\bar u)-Ap)}+\oo(\norm{z-\zb})\big)\\
&\leq\norm{\nabla \Phi(z)^{-1}}\big(\norm{f(u)-f(\bar u)-A(u-\bar u)}+\oo(\norm{z-\zb})\big).
\end{align*}
On the other hand there holds
\begin{align*}
  &\inf_{p\in\R^n}\norm{(u-\bar u-p,f(u)-f(\bar u)-Ap)}^2=\inf_{q\in\R^n}\norm{q}^2+\norm{f(u)-f(\bar u)-A(u-\bar u)-Aq}^2\\
  &\geq  \inf_{w\in\R^n}\frac{\norm{w}^2}{\norm{A}^2}+\norm{f(u)-f(\bar u)-A(u-\bar u)-w}^2=\frac1{1+\norm{A}^2}\norm{f(u)-f(\bar u)-A(u-\bar u)}^2,
\end{align*}
resulting in
\begin{align*}\dist{z-\zb,L}&=\inf_{p\in\R^n}\norm{\nabla \Phi(z)^{-1}\big(\Phi(z)-\Phi(\zb)+\oo(\norm{z-\zb})-(p,Ap)\big)}\\
&\geq\inf_{p\in\R^n}\norm{\nabla \Phi(z)^{-1}(u-\bar u-p,f(u)-f(\bar u)-Ap)}-\norm{\nabla \Phi(z)^{-1}}\oo(\norm{z-\zb})\\
&\geq\frac 1{\norm{\nabla\Phi(z)}}\inf_{p\in\R^n}\norm{(u-\bar u-p,f(u)-f(\bar u)-Ap)}-\norm{\nabla \Phi(z)^{-1}}\oo(\norm{z-\zb})\\
&\geq \frac 1{\norm{\nabla\Phi(z)}\sqrt{1+\norm{A}^2}}\norm{f(u)-f(\bar u)-A(u-\bar u)}-\norm{\nabla \Phi(z)^{-1}}\oo(\norm{z-\zb}).
\end{align*}
Since $\norm{A}$ is bounded by the Lipschitz constant $\kappa$ of $f$ and
\[\norm{u-\bar u}\begin{cases}\geq \frac{\norm{(u-\bar u,f(u)-f(\bar u))}}{\sqrt{1+\kappa^2}} \geq\frac{\norm{z-\zb}}{\norm{\nabla\Phi(z)^{-1}}\sqrt{1+\kappa^2}}+\oo(\norm{z-\zb})&,\\
\leq\norm{(u-\bar u,f(u)-f(\bar u))}\leq\norm{\nabla\Phi(z)}\norm{z-\zb}+\oo(\norm{z-\zb}),\end{cases}\]
 we conclude from these inequalities that
\[\Limsup_{z\longsetto{\gph F}\zb}\sup_{L\in\Sp F(z)}\frac{\dist{z-\zb,L}}{\norm{z-\zb}}=0\ \Leftrightarrow\
\limsup_{u\to\bar u}\sup_{A\in\onabla f(u)}\frac{\norm{f(u)-f(\bar u)-A(u-\bar u)}}{\norm{u-\bar u}}=0
\]
and the assertion follows from Proposition \ref{PropSCD_ss_Primal} and the definition of $\overline \nabla f$-semismoothness.
\end{proof}

\begin{corollary}
  Assume that the mapping $F:\R^n\tto\R^m$ is graphically Lipschitzian of dimension $n$ at $(\xb,\yb)\in\gph F$ and  SCD-ss${^*}$ around $(\xb,\yb)$. Then there is an open neighborhood $W$ of $(\xb,\yb)$ such that for almost all $(x,y)\in\gph F\cap W$ (with respect to the $n$-dimensional Hausdorff measure) the mapping $F$ is strictly proto-differentiable at $(x,y)$ and $\Sp F(x,y)$ is a singleton.
\end{corollary}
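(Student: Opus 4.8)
The plan is to push everything through the change of coordinates provided by Definition \ref{DefGraphLip} down to the Lipschitz mapping $f$, and there invoke Theorem \ref{ThNorkinR_m}. First I would fix $W$, $\Phi$, $U$ and $f:U\to\R^m$ as in Definition \ref{DefGraphLip}, so that $\Phi$ is a one-to-one continuously differentiable mapping with continuously differentiable inverse from $W$ onto an open subset of $\R^{n+m}$, $f$ is Lipschitz, and $\Phi(\gph F\cap W)=\gph f$. Shrinking $W$ (and replacing $U$ by the corresponding open set, still denoted by $U$) we may assume, using that $F$ is SCD-ss$^*$ around $(\xb,\yb)$, that $F$ is SCD-ss$^*$ at every $(x,y)\in\gph F\cap W$. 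Since $\gph F\cap W=\Phi^{-1}(\gph f)$ is the $\Phi^{-1}$-image of the graph of the continuous function $f$, it is relatively closed in $W$; hence $\gph F$ is locally closed at each of its points lying in $W$, and by Lemma \ref{LemSCDGraphLipsch} the mapping $F$ is graphically Lipschitzian of dimension $n$ and has the SCD property at each such point.

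Applying Proposition \ref{PropSCDss-ss} at an arbitrary $(x,y)\in\gph F\cap W$, with $(u,f(u)):=\Phi(x,y)$, yields that $f$ is $\onabla f$-semismooth at $u$; as $(x,y)$ ranges over $\gph F\cap W$ the point $u$ ranges over all of $U$, so $f$ is $\onabla f$-semismooth on the open set $U$. Theorem \ref{ThNorkinR_m}(iii), applied with $\Psi:=\onabla f$ (which is locally bounded since $f$ is Lipschitz), then shows that the set $U\setminus\Omega_\Psi$ is Lebesgue-null, where $\Omega_\Psi$ is the set of $u$ at which $\co\onabla f(u)$ is a singleton. Because $\emptyset\neq\onabla f(u)\subseteq\co\onabla f(u)$, the set $N:=\{u\in U\mv \onabla f(u)\text{ is not a singleton}\}$ is contained in $U\setminus\Omega_\Psi$ and is therefore also Lebesgue-null. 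For $u\in U\setminus N$ we have $\onabla f(u)=\{\nabla f(u)\}$, so $f$ is strictly differentiable at $u$ (Lemma \ref{LemStrictDiff}) and, by Lemma \ref{LemSCDGraphLipsch}, $\Sp F(x,y)=\{\nabla\Phi(x,y)^{-1}\rge(I,\nabla f(u))\}$ is a singleton; then Theorem \ref{ThStrictProtoDiff}, whose hypotheses were checked above, gives that $F$ is strictly proto-differentiable at $(x,y)=\Phi^{-1}(u,f(u))$.

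It remains to verify that the exceptional set $B:=\{(x,y)\in\gph F\cap W\mv \Sp F(x,y)\text{ is not a singleton}\}$ has $n$-dimensional Hausdorff measure zero (note that, by Theorem \ref{ThStrictProtoDiff}, $B$ is also exactly the set where strict proto-differentiability fails). Using the description $\Sp F(x,y)=\{\nabla\Phi(x,y)^{-1}\rge(I,A)\mv A\in\onabla f(u)\}$ from Lemma \ref{LemSCDGraphLipsch} together with the injectivity of the map $A\mapsto\nabla\Phi(x,y)^{-1}\rge(I,A)$, we see that $(x,y)\in B$ precisely when the associated $u$ lies in $N$; hence $B=\Phi^{-1}\big(\{(u,f(u))\mv u\in N\}\big)$. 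The map $u\mapsto(u,f(u))$ is Lipschitz, so it carries the Lebesgue-null set $N\subseteq\R^n$ to a subset of $\R^{n+m}$ of vanishing $n$-dimensional Hausdorff measure, and $\Phi^{-1}$, being continuously differentiable, is locally Lipschitz and thus maps $n$-dimensional Hausdorff-null sets to $n$-dimensional Hausdorff-null sets. Consequently $B$ is null, which together with the second paragraph proves the corollary. I expect this last, measure-transfer step to be the only point requiring genuine care: one has to keep track that the maps entering the argument are (locally) Lipschitz so that Hausdorff-nullity is really preserved in the direction needed; the remainder is a routine concatenation of Proposition \ref{PropSCDss-ss}, Theorem \ref{ThNorkinR_m} and Theorem \ref{ThStrictProtoDiff}.
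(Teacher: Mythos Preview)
Your proof is correct and follows essentially the same route as the paper: transfer to the Lipschitz map $f$ via $\Phi$, apply Proposition~\ref{PropSCDss-ss} to get $\onabla f$-semismoothness of $f$ on $U$, invoke Theorem~\ref{ThNorkinR_m} to obtain a Lebesgue-null exceptional set, and return to $F$ through Theorem~\ref{ThStrictProtoDiff} and Lemmas~\ref{LemStrictDiff}, \ref{LemSCDGraphLipsch}. You are actually more explicit than the paper on two points: you check local closedness of $\gph F$ (needed for Theorem~\ref{ThStrictProtoDiff}) and you spell out why the pull-back of a Lebesgue-null set under $u\mapsto\Phi^{-1}(u,f(u))$ is $n$-Hausdorff-null, which the paper leaves implicit.
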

\begin{proof}
 According to the notation of Definition \ref{DefGraphLip}, let $\Omega_F$ denote the set of all points in $\gph F\cap W$ where  $F$ is strictly proto-differentiable and let $\Omega_f$ denote the set of all points $u\in U\subset\R^n$ where $f$ is strictly differentiable. We may also assume that $F$ is SCD-ss$^*$ on $\gph F\cap W$ and therefore $f$ is $\onabla f$-ss on $U$ by Proposition \ref{PropSCDss-ss}. We conclude from Theorem \ref{ThNorkinR_m} that the set $U\setminus \Omega_f$ has Lebesgue measure zero and the assertion follows from the relation
  \[\Omega_F=\Phi^{-1}(\{(u,f(u))\mv u\in\Omega_f\}),\]
  which holds true by Theorem \ref{ThStrictProtoDiff} and Lemmas \ref{LemStrictDiff}, \ref{LemSCDGraphLipsch}.
\end{proof}
\if{
\begin{example}Consider a maximally monotone mapping $F:\R^n\tto\R^n$. Then $F$ is graphically Lipschitzian with transformation mapping $\Phi(x,y)=(x+y,x)$ and $f=(I+F)^{-1}$, the resolvent. By Lemma \ref{LemSCDGraphLipsch} there holds
\[\Sp F(x,y)=\{\rge(I-A,A)\mv A\in \onabla (I+F)^{-1}(x+y)\}.\]
\end{example}
}\fi

\section{On the semismooth$^*$ property of solution maps to inclusions}
In this section we return to the motivating problem \eqref{EqCompProbl}.
\begin{theorem}\label{ThSolMapSSGen}
  Consider a mapping $F:\R^n\times\R^m\tto\R^l$ and let
  \begin{equation}\label{EqSolMap}
    S(x):=\{y\in \R^m\mv 0\in F(x,y)\}
  \end{equation}
  be the solution map of the inclusion $0\in F(x,y)$. Assume that we are given $(\xb,\yb)\in\gph S$ such that the qualification condition
  \begin{equation}\label{EqQualCond}(0,0)\in D^*F(\xb,\yb,0)(z^*)\ \Rightarrow\ z^*=0\end{equation}
  holds. Then
   there is a neighborhood $U$ of $(\xb,\yb)$ such that for all $(x,y)\in\gph S\cap U$ one has
  \begin{equation}\label{EqInclCoderiv}
        D^*S(x,y)(y^*)\subset \Sigma(x,y)(y^*):=\{x^*\mv \exists z^*\in\R^l\mbox{ with } (x^*,-y^*)\in D^*F(x,y,0)(z^*)\},\ y^*\in\R^m.
  \end{equation}
  Moreover, $S$ is semismooth$^*$ at $(x,y)$ with respect to the mapping $\Gamma:\R^n\times\R^m\tto\R^m\times\R^n$ given by
  \begin{equation*}
  \Gamma(u,v)=\{(y^*,x^*)\mv x^*\in\Sigma(u,v)(y^*)\},
  \end{equation*}
  provided $F$ is semismooth$^*$ at $(x,y,0)$.
\end{theorem}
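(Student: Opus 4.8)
The plan is to read $\gph S$ as a preimage, $\gph S=h^{-1}(\gph F)$ where $h\colon\R^n\times\R^m\to\R^n\times\R^m\times\R^l$ is the affine injection $h(x,y)=(x,y,0)$, and to combine the limiting normal cone preimage rule with a uniform multiplier bound that propagates the qualification condition \eqref{EqQualCond} from $(\xb,\yb)$ to a whole neighborhood. (Throughout I tacitly use that $\gph F$ is locally closed near $(\xb,\yb,0)$, without which the limiting normal cone manipulations below are not licensed.)

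\emph{Step 1 (uniform multiplier bound).} I would first prove: there are $\kappa\geq 0$ and a neighborhood $U$ of $(\xb,\yb)$ such that
\[\norm{z^*}\leq\kappa\,\norm{(x^*,y^*)}\quad\text{whenever }(x,y)\in\gph S\cap U\text{ and }(x^*,-y^*)\in D^*F(x,y,0)(z^*).\]
This is shown by contradiction: a failing sequence $(x_k,y_k)\longsetto{\gph S}(\xb,\yb)$ with $\norm{z^*_k}>k\,\norm{(x^*_k,y^*_k)}$ forces $z^*_k\neq 0$; rescaling the normal vector $(x^*_k,-y^*_k,-z^*_k)$ by $1/\norm{z^*_k}$ (positive homogeneity of the coderivative) and passing to a subsequence yields, by closedness of $\gph N_{\gph F}$, a unit vector $\bar z^*$ with $(0,0)\in D^*F(\xb,\yb,0)(\bar z^*)$, contradicting \eqref{EqQualCond}. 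Specialising to $x^*=0$, $y^*=0$ shows in particular that \eqref{EqQualCond} holds at every $(x,y)\in\gph S\cap U$.

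\emph{Step 2 (coderivative inclusion).} For fixed $(x,y)\in\gph S\cap U$ I apply the standard normal cone preimage rule (e.g.\ \cite[Theorem 6.14]{RoWe98}) to $h^{-1}(\gph F)$. The Jacobian of $h$ is the injection $(a,b)\mapsto(a,b,0)$, so its transpose is the projection $(a,b,c)\mapsto(a,b)$ and the kernel of the transpose is $\{0\}\times\{0\}\times\R^l$; hence the constraint qualification $N_{\gph F}(x,y,0)\cap(\{0\}\times\{0\}\times\R^l)=\{0\}$ required by the rule is precisely \eqref{EqQualCond} at $(x,y)$, which was established in Step 1. The rule then gives $N_{\gph S}(x,y)\subset\{(a,b)\mv\exists c\in\R^l,\ (a,b,c)\in N_{\gph F}(x,y,0)\}$, which rewrites verbatim as \eqref{EqInclCoderiv}.

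\emph{Step 3 (semismoothness$^*$ of $S$).} Fix $(x,y)\in\gph S\cap U$ with $F$ semismooth$^*$ at $(x,y,0)$; I check the three requirements of $\Gamma$-ss$^*$. Cone-valuedness of $\Gamma$ is clear from positive homogeneity, and $\gph S$ lies locally in $\dom\Gamma$ because $(0,0)\in\Gamma(x',y')$ for every $(x',y')\in\gph S$ (since $(0,0,0)\in N_{\gph F}(x',y',0)$). For the quantitative estimate, given $\epsilon>0$ pick $\delta_1>0$ from the semismoothness$^*$ of $F$ at $(x,y,0)$ with tolerance $\epsilon/\sqrt{1+\kappa^2}$ and shrink to $\delta\le\delta_1$ with $\B_\delta(x,y)\subset U$. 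For $(x',y')\in\gph S\cap\B_\delta(x,y)$ and $(y^*,x^*)\in\Gamma(x',y')$, choose $z^*$ with $(x^*,-y^*)\in D^*F(x',y',0)(z^*)$; then $(x',y',0)\in\gph F\cap\B_{\delta_1}(x,y,0)$ and $\big(z^*,(x^*,-y^*)\big)\in\gph D^*F(x',y',0)$, so the semismoothness$^*$ inequality for $F$ — whose range increment $0-0$ vanishes — reads
\[\bigl|\skalp{x^*,x'-x}-\skalp{y^*,y'-y}\bigr|\le\frac{\epsilon}{\sqrt{1+\kappa^2}}\,\norm{(x',y')-(x,y)}\,\norm{(z^*,x^*,-y^*)}.\]
By the Step 1 bound, $\norm{(z^*,x^*,-y^*)}\le\sqrt{1+\kappa^2}\,\norm{(y^*,x^*)}$, and the right-hand side collapses to $\epsilon\,\norm{(x',y')-(x,y)}\,\norm{(y^*,x^*)}$, i.e.\ exactly condition (3) of the $\Gamma$-ss$^*$ definition. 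I expect Step 1 to be the crux — making the qualification condition robust and extracting the uniform bound on $z^*$ — whereas Steps 2 and 3 are essentially bookkeeping with the preimage rule and the defining inequality of semismoothness$^*$; the only further point deserving care is the implicit local closedness of $\gph F$ near $(\xb,\yb,0)$.
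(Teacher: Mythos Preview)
Your proof is correct and follows essentially the same route as the paper: the same contradiction argument for the uniform multiplier bound (the paper's $\gamma$ is your $\kappa$), the same coderivative preimage rule for \eqref{EqInclCoderiv} (the paper cites \cite[Theorem 4.46]{Mo06a} where you cite \cite[Theorem 6.14]{RoWe98}), and the same use of the semismoothness$^*$ inequality for $F$ together with the multiplier bound to derive the $\Gamma$-ss$^*$ estimate. You are slightly more careful than the paper in explicitly verifying the cone-valuedness and domain conditions in the $\Gamma$-ss$^*$ definition and in flagging the implicit local closedness of $\gph F$.
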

\begin{proof}
  We claim that there is some open neighborhood $U$ of $(\xb,\yb)$ and some real $\gamma>0$ such that for every pair $(x,y)\in\gph S\cap U$, every $z^*\in \R^l$, $\norm{z^*}=1$ and every $(x^*,y^*)\in D^*F(x,y,0)(z^*)$ one has $\norm{(x^*,y^*)}\geq 1/\gamma$. Assume on the contrary that there are sequences $(x_k,y_k)\longsetto{\gph S}(\xb,\yb)$ and $(z_k^*,x_k^*,y_k^*)\in\gph D^*F(x_k,y_k,0)$ with $\norm{z_k^*}=1$ and $\norm{(x_k^*,y_k^*)}\to 0$ as $k$ tends to $\infty$. By possibly passing to a subsequence we can assume that $z_k^*$ converges to some $z^*$ with $\norm{z^*}=1$ and by definition of the limiting coderivative we obtain $(0,0,z^*)\in\gph D^*F(\xb,\yb,0)$ contradicting \eqref{EqQualCond}. Hence our claim holds true.

  Then we also have for all $(x,y)\in\gph S\cap U$ that
  \[(0,0)\in D^*F(x,y,0)(z^*)\ \Rightarrow\ z^*=0\]
  and the inclusion \eqref{EqInclCoderiv} follows from \cite[Theorem 4.46]{Mo06a}.

   Now assume that $F$ is semismooth$^*$ at $(x,y,0)$ with $(x,y)\in U$. Fix $\epsilon>0$ and choose $\delta>0$ such that $\B_\delta(x,y)\subset U$ and for all $(x',y')\in \gph S\cap \B_\delta(x,y)$ and all $(z^*,x^*,y^*)\in \gph D^*F(x',y',0)$ one has
  \begin{equation}\label{EqAuxSS1}\vert \skalp{z^*,0-0}-\skalp{x^*,x'-x}-\skalp{y^*,y'-y}\vert\leq \epsilon\norm{(x',y',0)-(x,y,0)}\norm{(z^*,x^*,y^*)}.\end{equation}
  Next consider $(y^*,x^*)\in\Gamma(x',y')=\gph\Sigma(x',y')$ and the corresponding $z^*$ verifying $(z^*,x^*,-y^*)\in \gph D^*F(x',y',0)$. By our claim we have $\norm{z^*}\leq \gamma\norm{(x^*,-y^*)}=\gamma\norm{(x^*,y^*)}$ and from \eqref{EqAuxSS1} we obtain
  \[\vert \skalp{y^*,y'-y}-\skalp{x^*,x'-x}\vert\leq \epsilon\sqrt{1+\gamma^2}\norm{(x^*,y^*)}\norm{(x',y')-(x,y)}.\]
  Hence $S$ is $\Gamma$-ss$^*$ at $(x,y)$ and the proof  is complete.
\end{proof}
\begin{remark}
This result extends slightly \cite[Corollary 4.47] {Mo06a}, because estimate (\ref {EqInclCoderiv}) holds true not only at the reference point $(\xb,\yb)$ but also on a neighborhood $U$ of it.
\end{remark}
\begin{theorem}\label{ThSolMapSSLip}
In the setting of Theorem \ref{ThSolMapSSGen}, assume that there is an open neighborhood $\Omega\times V$ of $(\xb,\yb)\in\gph S$ such that $\gph S\cap(\Omega\times V)$ is the graph of a  continuous mapping $\sigma:\Omega\to\R^m$. Further assume that the strengthened qualification condition
\begin{equation}\label{EqStrongQual}
(x^*,0)\in D^*F(\xb,\yb,0)(z^*)\ \Rightarrow z^*=0,\ x^*=0
\end{equation}
holds. Then there is some open neighborhood $U$ of $\xb$ such that $\sigma$ is Lipschitz continuous on $U$. If, in addition, $F$ is semismooth$^*$ at $(x,\sigma(x),0)$ for some  $x\in U$, then for every function $\varphi:\R^n\times\R^m\to\R$, which is semismooth at $(x,\sigma(x))$ with respect to some mapping $\Phi:\R^n\times\R^m\tto\R^{1\times(n+m)}$, the mapping $\theta:U\to\R$, $\theta(u):=\varphi(u,\sigma(u))$, is semismooth at $x$ with respect to the mapping $\Theta:\R^n\tto\R^{1\times n}$ given by
\[\Theta(u):=\{g_x^T+{x^*}^T\mv x^*\in \Sigma(u,\sigma(u))(g_y), (g_x^T,g_y^T)\in \Phi(u,\sigma(u))\},\  u\in \dom\Theta:=U,\]
where $\Sigma$ is given by \eqref{EqInclCoderiv}.
\end{theorem}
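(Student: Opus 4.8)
The plan is to get (a) from the coderivative (Mordukhovich) criterion for the Lipschitz‑like property, and (b) by verifying $\Theta$‑semismoothness directly from Definition \ref{DefSS_Psi}, combining the $\Phi$‑semismoothness of $\varphi$ with the $\Gamma$‑ss$^*$ property of $S$ furnished by Theorem \ref{ThSolMapSSGen}; the only genuine work is an auxiliary boundedness estimate that renders the $\Gamma$‑ss$^*$ inequality usable. Throughout I would also use that \eqref{EqStrongQual} propagates from $(\xb,\yb)$ to a whole neighbourhood (standard compactness argument using outer semicontinuity of $N_{\gph F}$), so that after shrinking we may assume that \eqref{EqStrongQual} and the inclusion \eqref{EqInclCoderiv} hold at every $(u,\sigma(u))$ with $u$ near $\xb$.

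For part (a): putting $x^*=0$ in \eqref{EqStrongQual} recovers \eqref{EqQualCond}, so Theorem \ref{ThSolMapSSGen} applies, and $D^*S(\xb,\yb)(0)\subseteq\Sigma(\xb,\yb)(0)=\{0\}$ by \eqref{EqStrongQual}; since $\gph S$ is locally closed at $(\xb,\yb)$ (being $\gph\sigma$ with $\sigma$ continuous), the coderivative criterion, cf.\ \cite{Mo06a}, yields that $S$ has the Lipschitz‑like (Aubin) property at $(\xb,\yb)$. Shrinking the data so that $\sigma$‑values stay inside the neighbourhood $V'$ appearing in the Aubin estimate $S(u)\cap V'\subseteq S(u')+\kappa\norm{u-u'}\B$, single‑valuedness of the localization turns this estimate into $\norm{\sigma(u)-\sigma(u')}\le\kappa\norm{u-u'}$ on a neighbourhood $U\subseteq\Omega$ of $\xb$, which is the claim; $U$ is also taken small enough that the reductions of the first paragraph are in force on it.

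The core of part (b) is a boundedness statement: for every $M>0$ there are $\delta>0$, $K>0$ such that $\norm{x^*}\le K$ whenever $u\in\B_\delta(x)$, $\norm{g_y}\le M$ and $x^*\in\Sigma(u,\sigma(u))(g_y)$. I would prove this by contradiction — a sequence with $\norm{x_k^*}\to\infty$ gives $z_k^*$ with $(x_k^*,-g_y^k)\in D^*F(u_k,\sigma(u_k),0)(z_k^*)$; dividing by $t_k:=\norm{(x_k^*,z_k^*)}\to\infty$ and using positive homogeneity of the coderivative together with outer semicontinuity of $N_{\gph F}$ (and $g_y^k/t_k\to 0$) produces a nonzero pair $(a^*,c^*)$ with $(a^*,0)\in D^*F(x,\sigma(x),0)(c^*)$, contradicting \eqref{EqStrongQual} at $(x,\sigma(x))$. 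The same argument shows $\Theta$ is locally bounded near $x$ (its $g_x^T$‑part is bounded by $\bnd\Phi(x,\sigma(x))+1$), while $\Theta(u)\supseteq\{g_x^T+g_y^TA\mv A\in\onabla\sigma(u),\,(g_x^T,g_y^T)\in\Phi(u,\sigma(u))\}\neq\emptyset$ for $u\in U$ because $A^Tg_y\in D^*\sigma(u)(g_y)\subseteq\Sigma(u,\sigma(u))(g_y)$, so $\dom\Theta=U$ is consistent.

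Finally, for the semismoothness estimate: since $F$ is semismooth$^*$ at $(x,\sigma(x),0)$, Theorem \ref{ThSolMapSSGen} gives that $S$ is semismooth$^*$ at $(x,\sigma(x))$ with respect to $\Gamma(u,v)=\{(y^*,x^*)\mv x^*\in\Sigma(u,v)(y^*)\}$. Fixing $\epsilon>0$ and shrinking $\delta$, the $\Phi$‑ss property of $\varphi$ at $(x,\sigma(x))$, the $\Gamma$‑ss$^*$ property of $S$ at $(x,\sigma(x))$ applied to pairs $(g_y,x^*)$ with $(g_x^T,g_y^T)\in\Phi(u,\sigma(u))$ and $x^*\in\Sigma(u,\sigma(u))(g_y)$, the Lipschitz bound $\norm{(u,\sigma(u))-(x,\sigma(x))}\le\sqrt{1+\kappa^2}\,\norm{u-x}$ from (a), and the bound $\norm{(g_y,x^*)}\le\sqrt{M^2+K^2}$ from the lemma (with $M=\bnd\Phi(x,\sigma(x))+1$) together give two $\oo(\norm{u-x})$‑estimates sharing the common term $\skalp{g_y,\sigma(u)-\sigma(x)}$; subtracting them this term cancels and leaves, for every $h=g_x^T+{x^*}^T\in\Theta(u)$,
\[|\theta(u)-\theta(x)-h(u-x)|=|\varphi(u,\sigma(u))-\varphi(x,\sigma(x))-\skalp{g_x,u-x}-\skalp{x^*,u-x}|\le\epsilon\,C\,\norm{u-x}\]
with $C$ independent of $\epsilon$, which together with the local boundedness of $\Theta$ and $\dom\Theta=U$ being a neighbourhood of $x$ is exactly $\Theta$‑semismoothness of $\theta$ at $x$. (One could instead route this through the chain rule of Lemma \ref{LemChainRule} applied to $\theta=\varphi\circ(\,\cdot\,,\sigma(\cdot))$, but that only yields a subset of $\Theta$, so the direct verification is preferable.) I expect the boundedness statement of the third paragraph to be the main obstacle: without it the factor $\norm{(g_y,x^*)}$ in the $\Gamma$‑ss$^*$ inequality is uncontrolled and no $\oo(\norm{u-x})$ bound survives the cancellation, and this is precisely where the strengthened qualification condition \eqref{EqStrongQual}, rather than merely \eqref{EqQualCond}, is indispensable — both here and in part (a).
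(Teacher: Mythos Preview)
Your proposal is correct and follows essentially the same route as the paper: a contradiction argument using outer semicontinuity of $N_{\gph F}$ to get a uniform bound $\norm{(x^*,z^*)}\le\gamma\norm{y^*}$ near $(\xb,\yb)$ (which simultaneously yields Lipschitz continuity of $\sigma$ via the Mordukhovich criterion and the local boundedness of $\Theta$), followed by combining the $\Phi$-ss estimate for $\varphi$ with a semismoothness-type estimate containing the common term $\skalp{g_y,\sigma(u)-\sigma(x)}$ that cancels. The only organisational difference is that the paper proves the single bound $\norm{(x^*,z^*)}\le\gamma\norm{y^*}$ once up front and then invokes the semismooth$^*$ property of $F$ directly for the second estimate, whereas you split the boundedness into two pieces and route the second estimate through the $\Gamma$-ss$^*$ conclusion of Theorem~\ref{ThSolMapSSGen}; these are equivalent, since Theorem~\ref{ThSolMapSSGen} itself performs the same computation.
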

\begin{proof}
  We claim that there is some neighborhood $U\subset\Omega$ of $\xb$ and some real $\gamma>0$ such that for all $x\in U$ and all $(z^*,x^*,y^*)\in \gph D^*F(x,\sigma(x),0)$ one has
  $\norm{y^*}\geq \norm{(x^*,z^*)}/\gamma.$
  Assume on the contrary that there are sequences $x_k\to\xb$ and $(z_k^*,x_k^*,y_k^*)\in \gph D^*F(x_k,\sigma(x_k),0)$ with $\norm{(x^*_k,z^*_k)}=1$ and $y_k^*\to 0$ as $k\to\infty$. By possibly passing to a subsequence we can assume that $(x_k^*,z_k^*)$ converges to some $(x^*,z^*)$ with $\norm{(x^*,z^*)}=1$. It follows that $(z^*,x^*,0)\in \gph D^*F(\xb,\sigma(\xb),0)$ contradicting  \eqref{EqStrongQual}. Hence our claim holds true. Consequently, there holds
\begin{equation*}
(x^*,0)\in D^*F(x,\sigma(x),0)(z^*)\ \Rightarrow z^*=0,\ x^*=0 \end{equation*}
for every $x\in U$.
By possibly shrinking $U$ we may assume that $U$ is an open, bounded and convex neighborhood of $\xb$. By invoking \cite[Corollary 4.60]{Mo06a} we obtain that $\sigma$ is Lipschitz near every $x\in U$ with
\[\lip \sigma(x)\leq \sup\{\norm{x^*}\mv \exists z^* \mbox{ with }(z^*,x^*,-y^*)\in \gph D^*F(x,\sigma(x),0),\norm{y^*}\leq 1\}\leq \gamma,\ x\in U.\]
Thus, by \cite[Theorem 9.2]{RoWe98}, we obtain that $\sigma$ is Lipschitz continuous on $U$ with constant $\gamma$. Hence, by virtue of \eqref{EqInclCoderiv} we have
\begin{equation}\label{EqNonemptySigma}\emptyset\not=D^*\sigma(x)(y^*)\subset\Sigma(x,\sigma(x),0)\ \forall x\in U\ \forall y^*\in\R^m.\end{equation}

Next consider $x\in U$ and assume that $F$ is semismooth$^*$ at $(x,\sigma(x),0)$ and consider any function $\varphi:\R^n\to\R$ which is semismooth at $x$ with respect to some  mapping $\Phi:\R^n\times\R^m\tto\R^{1\times(n+m)}$. Then $\Phi$ is locally bounded at $(x,\sigma(x))$ and we can find some open convex neighborhood $\tilde U\subset U$ of $x$ such that
$\{(u,\sigma(u))\mv u\in\tilde U\} \subset\dom\Phi$ and
\[ M:=\sup\{\norm{(g_x^T,g_y^T)}\mv (g_x^T,g_y^T)\in \Phi( u,\sigma(u)), u\in \tilde U\}<\infty.\]
Further, for every $u\in U$ and every $y^*\in\R^m$ there holds
\[\sup\{\norm{x^*}\mv x^*\in \Sigma(u,\sigma(u))(y^*)\}=\sup\{\norm{x^*}\mv \exists z^*\mbox{ with }(z^*,x^*,-y^*)\in D^*F(u,\sigma(u),0)\}\leq \gamma\norm{y^*}.\]
Hence, for every $u\in \tilde U$ and every ${u^*}^T\in \Theta(u)$ we have
\[\norm{u^*}\leq \sup\{\norm{g_x}+\norm{x^*}\mv x^*\in \Sigma(u,\sigma(u))(g_y),\ (g_x^T,g_y^T)\in \Phi(u,\sigma(u))\}\leq M+\gamma M,\]
verifying the local boundedness of $\Theta$ at $x$. Further, since $\dom\Theta$ is a neighborhood of $(x,\sigma(x))$ and taking into account \eqref{EqNonemptySigma}, we obtain that $\dom\Theta$ is a neighborhood of $x$. Finally, let $\epsilon>0$ be given and consider $\delta>0$ such that for all $x'\in U$ with $(x',\sigma(x'),0)\in \B_{\delta}(x,\sigma(x),0)$ and all $(z^*,x^*,y^*)\in\gph D^*F(x',\sigma(x'),0)$ one has
\[\vert\skalp{z^*,0-0}-\skalp{x^*,x'-x}-\skalp{y^*,\sigma(x')-\sigma(x)}\vert\leq \epsilon\norm{(z^*,x^*,y^*)}\norm{(x'-x,\sigma(x')-\sigma(x),0-0)}\]
due to the semismoothness$^*$ property of $F$ at $(x,\sigma(x),0)$. By possibly decreasing $\delta$ we may also assume that for all $x'\in \tilde U$ with $(x',\sigma(x'))\in \B_\delta(x,\sigma(x))$ and all $(g_x^T,g_y^T)\in \Phi(x',\sigma(x'))$ there holds
\[\vert \varphi(x',\sigma(x'))- \varphi(x,\sigma(x))-\skalp{g_x,x'-x}-\skalp{g_y,\sigma(x')-\sigma(x)}\vert\leq\epsilon\norm{(x'-x,\sigma(x')-\sigma(x))}.\]
Choose $\delta'\in (0,\delta/\sqrt{1+\gamma^2})$ such that $\B_{\delta'}(x)\subset U$ and consider $x'\in \B_{\delta'}(x)$ and ${u^*}^T\in \Theta(x')$ together with $(g_x^T,g_y^T)\in\Phi(x',\sigma(x'))$ and $x^*\in \Sigma(x',\sigma(x'))(g_y)$ such that $u^*=g_x^T+{x^*}^T$. Further choose $z^*$ such that $(z^*,x^*,-g_y)\in \gph D^*F(x',\sigma(x'),0)$ according to the definition of $\Sigma$. Then $(x',\sigma(x'))\in\B_\delta(x,\sigma(x))$ and consequently
\begin{align*}
  &\lefteqn{\vert \varphi(x',\sigma(x'))-\varphi(x,\sigma(x))-\skalp{u^*,x'-x}\vert}\\
  &\leq\vert \varphi(x',\sigma(x'))-\varphi(x,\sigma(x))-\skalp{g_x,x'-x}-\skalp{g_y,\sigma(x')-\sigma(x)}\vert+\vert \skalp{g_y,\sigma(x')-\sigma(x)}-\skalp{x^*,x'-x}\vert\\
  &\leq \epsilon\norm{(x'-x,\sigma(x')-\sigma(x))}+\epsilon\norm{(z^*,x^*,-g_y)}\norm{(x'-x,\sigma(x')-\sigma(x),0-0)}\\
  &\leq\epsilon\norm{x'-x}\sqrt{1+\gamma^2}(1+\sqrt{1+\gamma^2}\norm{g_y})\leq \epsilon\norm{x'-x}\sqrt{1+\gamma^2}(1+\sqrt{1+\gamma^2}M),
\end{align*}
verifying that $\theta$ is semismooth with respect to $\Theta$ at $x$. This completes the proof.
\end{proof}
\begin{remark}\label{RemConstrPsi}
In the setting of Theorem \ref{ThSolMapSSLip}, consider a point $x\in U$ where $F$ is semismooth$^*$ at $(x,\sigma(x),0)$ and consider the linear functions $\beta_i:\R^n\times\R^m\to\R$, $\beta_i(x,y)=y_i$, $i=1,\ldots,m$, together with mappings $\Psi_i: U\tto\R^{1\times n}$ given by  $\Psi_i(u):=\{{u^*}^T\mv u^*\in\Sigma(u,\sigma(u))(-e_i)\}$, where $e_i$ denotes the $i$-th unit vector in $\R^m$. It follows from Theorem \ref{ThSolMapSSLip} that $\sigma_i(u)=\beta_i(u,\sigma(u))$ is semismooth at $x$ with respect to $\Psi_i$, $i=1,\ldots,m$. Hence,  $\sigma$ is also semismooth with respect to the mapping $\Psi$, which assigns to each $u\in U$ the set of matrices whose $i$-th row is an element from $\Psi_i(u)$ as in \eqref{EqPsi_from_Components}.
\end{remark}

\begin{corollary}\label{CorSS*Imp}
  In the setting of Theorem \ref{ThSolMapSSGen}, assume that we are given an open set $\Omega$ and a continuous mapping $\sigma:\Omega\to\R^m$ such that for every $x\in \Omega$ the mapping $F$ is semismooth$^*$ at $(x,\sigma(x),0)$, the qualification condition \eqref{EqQualCond}
  \if{
    \begin{equation*}
        (x^*,0)\in D^*F(x,\sigma(x),0)(z^*)\ \Rightarrow z^*=0, x^*=0
    \end{equation*}
    }\fi
    holds and there exists a neighborhood $U\times V$ of $(x,\sigma(x))$ such that $\gph S\cap(U\times V)=\gph\sigma\cap(U\times V)$.\\
    Then $\sigma$ is semismooth on $\Omega$ with respect to some mapping $\Psi:\R^n\tto\R^{n\times m}$, which can be constructed analogously to Remark \ref{RemConstrPsi}. Further, for every function $\varphi:D\to \R$, where $D$ is an open subset of $\R^n\times\R^m$ containing $\gph \sigma$, which is semismooth on $\gph\sigma$ with respect to some mapping $\Phi:\R^n\times\R^m\tto\R^{1\times(n+m)}$, there holds that the mapping $\theta:\Omega\to\R$ given by $\theta(x)=\varphi(x,\sigma(x))$ is semismooth on $\Omega$ with respect to the mapping $\Theta:\R^n\tto\R^{1\times n}$ given by
        \[\Theta(x):=\{g_x^T+{x^*}^T\mv x^*\in \Sigma(x,\sigma(x))(g_y), (g_x^T,g_y^T)\in \Phi(x,\sigma(x))\},\ x\in \dom\Theta:=\Omega.\]
\if{
    Then the following statements hold:
    \begin{enumerate}
    \item $\sigma$ is locally Lipschitz on $\Omega$.
    \item $\sigma$ is semismooth on $\Omega$ with respect to some mapping $\tilde\Psi:\R^n\tto\R^{n\times m}$, which can be constructed analogously to Remark \ref{RemConstrPsi}. In particular, $\sigma$ is strictly  differentiable almost everywhere in $\Omega$.
    \item Consider a function $\varphi:D\to \R$, where $D$ is an open subset of $\R^n\times\R^m$ containing $\gph \sigma$ and assume that  $\varphi$ is semismooth on $\gph\sigma$ with respect to some mapping $\Phi:\R^n\times\R^m\tto\R^{1\times(n+m)}$. Then the mapping $\theta:\Omega\to\R$ given by $\theta(x)=\varphi(x,\sigma(x))$ is semismooth on $\Omega$ with respect to the mapping $\Psi:\R^n\tto\R^{1\times n}$ given by
        \[\Psi(x):=\{g_x^T+{x^*}^T\mv x^*\in \Sigma(x,\sigma(x))(g_y), (g_x^T,g_y^T)\in \Phi(x,\sigma(x))\},\ x\in \dom\Psi:=\Omega.\]
  \end{enumerate}
  }\fi
\end{corollary}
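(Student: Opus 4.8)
The plan is to obtain the statement as a ``local-to-global'' upgrade of Theorem~\ref{ThSolMapSSLip}: I would apply that theorem at every point of $\Omega$ and then exploit the fact that, by Definition~\ref{DefSS_Psi}, being semismooth with respect to a mapping is decided by the behaviour of that mapping on an arbitrarily small neighbourhood of the point in question.

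Concretely, fix an arbitrary $x_0\in\Omega$ and put $\yb_0:=\sigma(x_0)$. By hypothesis there is a neighbourhood $U\times V$ of $(x_0,\yb_0)$ with $\gph S\cap(U\times V)=\gph\sigma\cap(U\times V)$, so that $\gph S$ coincides near $(x_0,\yb_0)$ with the graph of the continuous mapping $\sigma|_{U\cap\Omega}$; moreover $F$ is semismooth$^*$ at $(x_0,\yb_0,0)$ and the qualification condition holds at $x_0$. Thus all hypotheses of Theorem~\ref{ThSolMapSSLip} are met with reference point $(x_0,\yb_0)$, and we obtain a neighbourhood $U_{x_0}\subset\Omega$ of $x_0$ on which $\sigma$ is Lipschitz, on which $\Sigma(\cdot,\sigma(\cdot))$ is nonempty-valued and locally bounded (via \eqref{EqInclCoderiv} and \eqref{EqNonemptySigma}), and such that $\sigma$ is semismooth at $x_0$ with respect to the restriction to $U_{x_0}$ of the component-wise mapping of Remark~\ref{RemConstrPsi}, and $\theta=\varphi(\cdot,\sigma(\cdot))$ is semismooth at $x_0$ with respect to the restriction to $U_{x_0}$ of $\Theta$.

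Next I would observe that the globally defined mappings $\Psi$ and $\Theta$ of the corollary --- built from $\Sigma$, $\Phi$ and $\sigma$ by the very same formulas, with domain $\Omega$ --- agree on $U_{x_0}$ with the mappings supplied by Theorem~\ref{ThSolMapSSLip} and Remark~\ref{RemConstrPsi}, because $\Sigma$ and $\Phi$ do not depend on any reference point. Since the three conditions in Definition~\ref{DefSS_Psi} defining ``$\sigma$ is $\Psi$-ss at $x_0$'' --- that $\dom\Psi$ be a neighbourhood of $x_0$, that $\Psi$ be locally bounded at $x_0$, and that the limit in \eqref{EqGsemismooth} vanish --- involve only the values of the mapping on a neighbourhood of $x_0$, and since $\Omega$ is open (so $\dom\Psi=\dom\Theta=\Omega$ is a neighbourhood of $x_0$, and both mappings are nonempty-valued near $x_0$ by the previous step), it follows that $\sigma$ is $\Psi$-ss at $x_0$ and $\theta$ is $\Theta$-ss at $x_0$. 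As $x_0\in\Omega$ was arbitrary, $\sigma$ and $\theta$ are semismooth on $\Omega$ with respect to $\Psi$ and $\Theta$, respectively, which is the claim.

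I expect the only genuinely delicate part to be bookkeeping rather than anything conceptual: one must check that the global $\Psi$ and $\Theta$ are nonempty-valued and locally bounded at every point of $\Omega$ --- which rests on the uniform local Lipschitz bound $\gamma$ for $\sigma$ furnished by the qualification condition through Theorem~\ref{ThSolMapSSLip} / \cite[Corollary~4.60]{Mo06a} --- and that the construction of Remark~\ref{RemConstrPsi} is literally the restriction of the global $\Psi$ to each $U_{x_0}$. No new estimate beyond those already carried out in the proof of Theorem~\ref{ThSolMapSSLip} is needed.
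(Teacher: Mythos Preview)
Your approach is correct and coincides with the paper's (which gives no explicit proof): the corollary follows by applying Theorem~\ref{ThSolMapSSLip} and Remark~\ref{RemConstrPsi} at each $x_0\in\Omega$, exactly as you outline. One point worth flagging is that the corollary as printed cites the weaker condition~\eqref{EqQualCond}, whereas Theorem~\ref{ThSolMapSSLip} --- and your invocation of it --- needs the strengthened condition~\eqref{EqStrongQual} to obtain the Lipschitz bound $\gamma$ and hence the local boundedness of $\Psi$ and $\Theta$; this is evidently a typo in the paper (its source contains the strengthened condition commented out at that very spot), so your tacit use of~\eqref{EqStrongQual} is the intended reading.
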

This corollary enables us to apply  the ImP approach to the numerical solution of a class of problems (\ref{EqCompProbl}) with nonsmooth objectives $\varphi$ via a suitable bundle method. Apart from the case when $\sigma$ is a single-valued and Lipschitz localization of $S$ for all $x\in\Omega$, however, the construction of $\Psi$ according to Remark \ref{RemConstrPsi} is rather complicated. We will now show that the theory considerably simplifies  in the case of SCD mappings.

For an SCD mapping $F:\R^n\times\R^m\tto\R^m$, the subspaces $L^*\in \Sp^*F(x,y,z)$ belong to $\Z_{m,n+m}$ and therefore $L^*\subset\R^{m+n+m}$. By identifying $\R^{m+n+m}$ with $\R^m\times\R^n\times\R^m$, we denote the elements of $L^*$ by $(z^*,x^*,y^*)\in\R^m\times\R^n\times\R^m$.
We start our analysis with the following auxiliary result:
\begin{proposition}\label{PropRegSubsp}
 For every subspace $L^*\in\Z_{m,{n+m}}$ satisfying the implication
 \begin{equation}\label{EqRegSubspace}(z^*,x^*,0)\in L^*\Rightarrow z^*=0,\ x^*=0\end{equation}
  there are unique  matrices $Z_{L^*}\in\R^{m\times m}$ and $X_{L^*}\in\R^{n\times m}$ such that
\begin{equation}\label{EqDualBasRepr}L^*=\rge(Z_{L^*}, X_{L^*},I)=\{(Z_{L^*}p, X_{L^*}p,p)\mv p\in\R^m\}.\end{equation}
In addition, there holds
\[\kappa(L^*):=\bnorm{\myvec{Z_L^*\\X_{L^*}}}=\sup\{\norm{(z^*,x^*)}\mv (z^*,x^*,y^*)\in L^*, \norm{y^*}\leq 1\}.\]
\end{proposition}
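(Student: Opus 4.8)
The key observation will be that the implication \eqref{EqRegSubspace} is nothing but the statement that $L^*$ meets the coordinate subspace $\R^m\times\R^n\times\{0\}$ only in the origin, equivalently that the projection $\pi\colon\R^m\times\R^n\times\R^m\to\R^m$, $(z^*,x^*,y^*)\mapsto y^*$, restricted to $L^*$ is injective: if $(z_1^*,x_1^*,y^*)$ and $(z_2^*,x_2^*,y^*)$ both lie in $L^*$, then their difference $(z_1^*-z_2^*,x_1^*-x_2^*,0)$ lies in $L^*$, so \eqref{EqRegSubspace} yields $z_1^*=z_2^*$ and $x_1^*=x_2^*$. Since $L^*\in\Z_{m,n+m}$ has dimension $m$, an injective linear map $\pi|_{L^*}\colon L^*\to\R^m$ is automatically bijective; hence for every $p\in\R^m$ there is exactly one pair $(z^*,x^*)\in\R^m\times\R^n$ with $(z^*,x^*,p)\in L^*$.

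First I would use this bijection to define $Z_{L^*}p:=z^*$ and $X_{L^*}p:=x^*$, where $(z^*,x^*)$ is the unique pair just described. Linearity of $L^*$ together with the uniqueness forces these assignments to be linear, so they are given by matrices $Z_{L^*}\in\R^{m\times m}$ and $X_{L^*}\in\R^{n\times m}$, and by construction $L^*=\{(Z_{L^*}p,X_{L^*}p,p)\mv p\in\R^m\}=\rge(Z_{L^*},X_{L^*},I)$, which is \eqref{EqDualBasRepr}. For uniqueness, suppose $L^*=\rge(Z,X,I)$ for some other matrices $Z,X$; then for each $p$ the vectors $(Zp,Xp,p)$ and $(Z_{L^*}p,X_{L^*}p,p)$ both lie in $L^*$ and have the same image $p$ under $\pi$, so by injectivity of $\pi|_{L^*}$ they coincide, and letting $p$ range over $\R^m$ gives $Z=Z_{L^*}$ and $X=X_{L^*}$.

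For the norm identity I would simply substitute the parametrization \eqref{EqDualBasRepr}: a triple $(z^*,x^*,y^*)$ belongs to $L^*$ precisely when $z^*=Z_{L^*}y^*$ and $x^*=X_{L^*}y^*$, in which case
\[\norm{(z^*,x^*)}=\sqrt{\norm{Z_{L^*}y^*}^2+\norm{X_{L^*}y^*}^2}=\bnorm{\myvec{Z_{L^*}\\X_{L^*}}y^*}.\]
Taking the supremum over all $y^*$ with $\norm{y^*}\le1$ turns the right-hand side, by the definition of the operator norm, into $\bnorm{\myvec{Z_{L^*}\\X_{L^*}}}=\kappa(L^*)$, which is exactly the claimed equality. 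This proof is essentially routine linear algebra; the one step that deserves a moment's attention — the only real obstacle — is the passage from injectivity of $\pi|_{L^*}$ to its surjectivity, which is legitimate precisely because $L^*$ has dimension $m$, equal to that of the target space $\R^m$.
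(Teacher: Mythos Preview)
Your proof is correct and follows essentially the same approach as the paper: both establish that the map sending $(z^*,x^*,y^*)\in L^*$ to $y^*$ is a linear bijection onto $\R^m$ and then use its inverse to produce $Z_{L^*}$ and $X_{L^*}$. The only cosmetic difference is that the paper picks a basis matrix $B=(B_{z^*},B_{x^*},B_{y^*})$ for $L^*$, shows $B_{y^*}$ is nonsingular via \eqref{EqRegSubspace}, and sets $Z_{L^*}=B_{z^*}B_{y^*}^{-1}$, $X_{L^*}=B_{x^*}B_{y^*}^{-1}$, whereas you argue directly with the projection $\pi$.
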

\begin{proof}
  Consider an $(m+n+m)\times m$ matrix $B$ whose columns form a basis for the $m$-dimensional subspace $L^*\subset R^{m+n+m}$. We can partition $B$ into the matrices $B_{z^*}\in\R^{m\times m}$, $B_{x^*}\in\R^{n\times m}$ and $B_{y^*}\in\R^{m\times m}$ so that
  \begin{equation*}
  B=\myvec{B_{z^*}\\B_{x^*}\\B_{y^*}}.\end{equation*}
  We claim that condition \eqref{EqRegSubspace} ensures the nonsingularity of  the matrix $B_{y^*}$. Assume on the contrary that there is some $0\not=p\in\R^m$ satisfying $B_{y^*}p=0$. Then we have
  $Bp=(B_{z^*}p,B_{x^*}p,0)\in L^*$ and consequently $B_{z^*}p=0$, $B_{x^*}p=0$ due to \eqref{EqRegSubspace}. Thus $Bp=0$, which is not possible since the columns of $B$ form a basis for $L^*$ and are therefore linearly independent. Hence the matrix $B_{y^*}$ is nonsingular and the columns of the matrix $B B_{y^*}^{-1}$ form another basis for $L^*$. Representation \eqref{EqDualBasRepr} now follows with
  $Z_{L^*}=B_{z^*}B_{y^*}^{-1},\ X_{L^*}=B_{x^*}B_{y^*}^{-1}$
  and it is easy to see that  it is unique. Finally,
  \[\sup\{\norm{(z^*,x^*)}\mv (z^*,x^*,y^*)\in L^*, \norm{y^*}\leq 1\}= \sup\{\norm{(Z_{L^*}y^*,X_{L^*}y^*)}\mv \norm{y^*}\leq 1\}=\bnorm{\myvec{Z_L^*\\X_{L^*}}}\]
  and we are done.
\end{proof}

Let us  denote by $\Z_{m,n+m}^{\rm reg}$ the collection of all subspaces $L^*\in\Z_{m,n+m}$ satisfying \eqref{EqRegSubspace}.
In the following statements we collect some useful relations concerning the subspaces $L^*$ from $\Z_{m,n+m}^{\reg}$ and the moduli $\kappa (L^*)$ and ${\rm scd\, reg\,}F(x,y,z)$ (introduced below).
\begin{lemma}
  Assume that $F:\R^n\times\R^m\tto\R^m$ has the SCD property at $(x,y,z)\in\gph F$. Then the following statements are equivalent:
  \begin{enumerate}
    \item[(i)]$(z^*,x^*,0)\in(\bigcup\Sp^*F)(x,y,z) \Rightarrow z^*=0,\ x^*=0.$
    \item[(ii)] $\Sp^*F (x,y,z)\subset \Z_{m,n+m}^{\rm reg}$.
    \item[(iii)]${\rm scd\, reg\,}F(x,y,z):=\sup\{\norm{(z^*,x^*)}\mv (z^*,x^*,y^*)\in (\bigcup\Sp^*F)(x,y,z),\norm{y^*}\leq 1\}<\infty$.
  \end{enumerate}
  In this case one also has
  \begin{equation}\label{EqSCDreg1} {\rm scd\, reg\,}F(x,y,z):=\sup\{\kappa(L^*)\mv L^*\in \Sp^*F(x,y,z)\}.\end{equation}
\end{lemma}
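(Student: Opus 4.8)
The plan is to prove the equivalence of (i)--(iii) via the short cycle (i)~$\Leftrightarrow$~(ii), (iii)~$\Rightarrow$~(i), (ii)~$\Rightarrow$~(iii), and then to read off \eqref{EqSCDreg1} from Proposition~\ref{PropRegSubsp}. The equivalence (i)~$\Leftrightarrow$~(ii) is immediate once one recalls that, by definition, $(\bigcup\Sp^*F)(x,y,z)$ is the union of all subspaces $L^*\in\Sp^*F(x,y,z)$; hence a vector $(z^*,x^*,0)$ lies in $(\bigcup\Sp^*F)(x,y,z)$ iff it lies in some $L^*\in\Sp^*F(x,y,z)$, so the implication in (i) fails exactly when some $L^*\in\Sp^*F(x,y,z)$ contains a vector $(z^*,x^*,0)$ with $(z^*,x^*)\neq(0,0)$, i.e.\ exactly when (ii) fails.

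For (iii)~$\Rightarrow$~(i) I would argue by contraposition: if (i) fails, a violating vector $(z^*,x^*,0)$ lies in some $L^*\in\Sp^*F(x,y,z)$, and since $L^*$ is a subspace the whole ray $\{(tz^*,tx^*,0)\mv t>0\}$ is contained in $(\bigcup\Sp^*F)(x,y,z)$; evaluating the supremum defining ${\rm scd\, reg\,}F(x,y,z)$ along this ray (taking the $y^*$-block equal to $0$, so $\norm{y^*}\leq1$) gives at least $t\norm{(z^*,x^*)}\to\infty$, so (iii) fails. The core of the argument, and the step I expect to be the main obstacle, is (ii)~$\Rightarrow$~(iii), because it needs a compactness argument in $\Z_{m,n+m}$. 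Assuming (ii) but ${\rm scd\, reg\,}F(x,y,z)=+\infty$, I would choose vectors $v_k=(z_k^*,x_k^*,y_k^*)\in L_k^*$ with $L_k^*\in\Sp^*F(x,y,z)$, $\norm{y_k^*}\leq1$ and $\norm{(z_k^*,x_k^*)}\to\infty$; then $\norm{v_k}\to\infty$, and the normalized vectors $w_k:=v_k/\norm{v_k}\in L_k^*$ have unit norm with last block $y_k^*/\norm{v_k}\to0$. Using that $\Sp^*F(x,y,z)$ is a closed subset of the compact space $\Z_{m,n+m}$ (it is the isometric image of $\Sp F(x,y,z)=(\cl\widehat\Sp F)(x,y,z)$, which is closed), I would pass to subsequences so that $L_k^*\to\bar L^*\in\Sp^*F(x,y,z)$, i.e.\ $P_{L_k^*}\to P_{\bar L^*}$, and $w_k\to\bar w$; from $w_k=P_{L_k^*}w_k$ together with $P_{L_k^*}\to P_{\bar L^*}$ and $w_k\to\bar w$ one obtains $\bar w=P_{\bar L^*}\bar w\in\bar L^*$, while $\bar w$ is a unit vector of the form $(\bar z^*,\bar x^*,0)$ with $(\bar z^*,\bar x^*)\neq(0,0)$. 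This contradicts $\bar L^*\in\Z_{m,n+m}^{\rm reg}$, which holds by (ii). The delicate points are precisely that the limit subspace remains in $\Sp^*F(x,y,z)$ and that the membership $w_k\in L_k^*$ passes to the limit; both are handled through the projection-matrix description of the metric on $\Z_{m,n+m}$.

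Finally, once (i)--(iii) are known to be equivalent, \eqref{EqSCDreg1} follows by combining (ii) with Proposition~\ref{PropRegSubsp}: for every $L^*\in\Sp^*F(x,y,z)\subset\Z_{m,n+m}^{\rm reg}$ that proposition gives $\kappa(L^*)=\sup\{\norm{(z^*,x^*)}\mv(z^*,x^*,y^*)\in L^*,\ \norm{y^*}\leq1\}$; taking the supremum over $L^*\in\Sp^*F(x,y,z)$ and invoking the identity $(\bigcup\Sp^*F)(x,y,z)=\bigcup\{L^*\mv L^*\in\Sp^*F(x,y,z)\}$ turns the right-hand side into $\sup\{\norm{(z^*,x^*)}\mv(z^*,x^*,y^*)\in(\bigcup\Sp^*F)(x,y,z),\ \norm{y^*}\leq1\}$, which is exactly ${\rm scd\, reg\,}F(x,y,z)$. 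This yields \eqref{EqSCDreg1}.
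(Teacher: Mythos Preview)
Your proof is correct and follows essentially the same approach as the paper: the equivalence (i)$\Leftrightarrow$(ii) is immediate from the definitions, the implication (iii)$\Rightarrow$(i) is the trivial contrapositive, and the substantive step (ii)$\Rightarrow$(iii) is carried out via the same compactness argument in $\Z_{m,n+m}$ (normalizing a violating sequence, extracting convergent subsequences of both the subspaces and the unit vectors, and using closedness of $\Sp^*F(x,y,z)$ to reach a contradiction). Your derivation of \eqref{EqSCDreg1} from Proposition~\ref{PropRegSubsp} is likewise the intended one, just spelled out in slightly more detail than in the paper.
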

\begin{proof}
The implication (i)$\Rightarrow$(ii) follows immediately from Proposition \ref{PropRegSubsp}. We prove the implication (ii)$\Rightarrow$(iii). Assuming that (iii) does not hold there are sequences $(z_k^*,x_k^*,y_k^*)\in L_k^*\subset (\bigcup\Sp^*F)(x,y,z)$ with $\norm{y_k^*}\leq 1$ for all $k$ and $\norm{(z_k^*,x_k^*)}\to\infty$ as $k\to\infty$. Since $L^*_k$ is a subspace, we have $(\tilde z^*_k,\tilde x^*_k,\tilde y^*_k):= (z_k^*,x_k^*,y_k^*)/\norm{(z_k^*,x_k^*,y_k^*)}\in L_k^*$ for all $k$. Taking into account that the metric space $\Z_{m,n+m}$ is compact, by possibly passing to a subsequence we can assume that $L_k^*$ converges to some $L^*\in \Z_{m,n+m}$ and $(\tilde z^*_k,\tilde x^*_k, \tilde y^*_k)$ converges to some $(\tilde z^*,\tilde x^*,\tilde y^*)\in L^*$ with $\norm{(\tilde z^*,\tilde x^*,\tilde y^*)}=1$. Since $\Sp^*F(x,y,z)$ is closed in $\Z_{m,n+m}$, there holds $L^*\in \Sp^*F(x,y,z)$ and because of $\tilde y_k^*/\norm{(z_k^*,x_k^*,y_k^*)}\to 0$ we have $\tilde y^*=0$. Hence $(\tilde z^*,\tilde x^*,0)\in L^*\in \Sp^*F(x,y,z)$ and $\norm{(\tilde z^*,\tilde x^*)}=1$ showing $L^*\not\in \Z_{m,n+m}^{\rm reg}$. This proves the implication (ii)$\Rightarrow$(iii). The implication (iii)$\Rightarrow$(i)  holds trivially. Finally, formula \eqref{EqSCDreg1} is a consequence of the definitions of ${\rm scd\,reg}F$, $(\bigcup\Sp^*F)$ and $\kappa(L^*)$.
\end{proof}
By following the proof of \cite[Proposition 4.8]{GfrOut22a} one can easily show the following result.
\begin{proposition}\label{PropSCDRegOSC}
Assume that $F:\R^n\times\R^m\tto\R^m$ has the SCD property around some point $(x,y,z)\in\gph F$. Then
\begin{equation*}
\limsup_{(x',y',z')\longsetto{\gph F}(x,y,z)}{\rm scd\, reg\,}F(x',y',z')\leq {\rm scd\, reg\,}F(x,y,z).\end{equation*}
\end{proposition}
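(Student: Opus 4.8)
The plan is to read the asserted inequality as an upper semicontinuity property of the modulus $\scdreg F$ and to prove it by a sequential compactness argument in the metric space $\Z_{m,n+m}$, essentially along the lines of the proof of \cite[Proposition 4.8]{GfrOut22a}.

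First I would dispose of the trivial case: if $\scdreg F(x,y,z)=\infty$ there is nothing to prove, so I assume $c:=\scdreg F(x,y,z)<\infty$, which by the preceding lemma is equivalent to the implication
\[(z^*,x^*,0)\in(\bigcup\Sp^*F)(x,y,z)\ \Rightarrow\ z^*=0,\ x^*=0.\]
Arguing by contradiction, suppose the claimed inequality fails and fix a finite $\ell'$ with $c<\ell'$ strictly below the left-hand side of the claimed inequality. Then for every $\delta>0$ the supremum of $\scdreg F$ over $\gph F\cap\B_\delta(x,y,z)$ exceeds $\ell'$, so taking $\delta=1/k$ produces a sequence $(x_k,y_k,z_k)\longsetto{\gph F}(x,y,z)$ with $\scdreg F(x_k,y_k,z_k)>\ell'$; note $(x_k,y_k,z_k)\ne(x,y,z)$ since $\ell'>c$. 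For $k$ large these points lie in the neighbourhood on which $F$ has the SCD property, hence $\Sp^*F(x_k,y_k,z_k)\ne\emptyset$, and by the sup-formula defining $\scdreg$ (item (iii) of the preceding lemma) I may choose a subspace $L_k^*\in\Sp^*F(x_k,y_k,z_k)$ together with an element $(z_k^*,x_k^*,y_k^*)\in L_k^*$ satisfying $\norm{y_k^*}\le1$ and $\norm{(z_k^*,x_k^*)}>\ell'$.

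The key move is to rescale by the norm of the $(z^*,x^*)$-block rather than by the full norm; this yields a bounded sequence and avoids a case distinction. Put $w_k:=(z_k^*,x_k^*,y_k^*)/\norm{(z_k^*,x_k^*)}\in L_k^*$. Then the $(z^*,x^*)$-part of $w_k$ has norm $1$ while the $y^*$-part has norm $\norm{y_k^*}/\norm{(z_k^*,x_k^*)}<1/\ell'$, so $\norm{w_k}<\sqrt{1+1/(\ell')^{2}}$. Since $\Z_{m,n+m}$ is (sequentially) compact and the $w_k$ stay in a fixed ball, after passing to a subsequence we have $L_k^*\to L^*$ in $\Z_{m,n+m}$ and $w_k\to w=:(z^*,x^*,y^*)$. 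Closedness of $\gph\Sp^*F$ gives $L^*\in\Sp^*F(x,y,z)$, and from $w_k\in L_k^*$, $w_k\to w$, $L_k^*\to L^*$ (equivalently, the orthogonal projections onto $L_k^*$ converge in operator norm to the one onto $L^*$) one gets $w\in L^*$. By continuity of the norm, $\norm{(z^*,x^*)}=1$ and $\norm{y^*}\le1/\ell'$.

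Finally I would close the argument in two branches. If $y^*=0$, then $(z^*,x^*,0)\in L^*\subset(\bigcup\Sp^*F)(x,y,z)$ with $\norm{(z^*,x^*)}=1\ne0$, contradicting the implication displayed above. If $y^*\ne0$, then $w/\norm{y^*}\in L^*\subset(\bigcup\Sp^*F)(x,y,z)$ is a triple whose $y^*$-block has norm $1$ and whose $(z^*,x^*)$-block has norm $1/\norm{y^*}\ge\ell'$; by the definition of $\scdreg$ this forces $c=\scdreg F(x,y,z)\ge\ell'$, contradicting $\ell'>c$. In either case we reach a contradiction, which proves the proposition. I expect the only genuinely delicate point to be the choice of rescaling, so that the extracted sequence is automatically bounded (no split according to whether $\norm{(z_k^*,x_k^*,y_k^*)}$ stays bounded is needed), together with the final dichotomy on whether the limiting $y^*$-component vanishes; the remaining steps are a routine compactness-plus-closedness argument in $\Z_{m,n+m}$.
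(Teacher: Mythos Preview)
Your proof is correct and follows essentially the same compactness-plus-closedness strategy the paper alludes to (the paper itself gives no detailed proof, only the reference to \cite[Proposition~4.8]{GfrOut22a}). The one noteworthy technical difference is your choice of rescaling: by dividing through by $\norm{(z_k^*,x_k^*)}$ rather than by the full norm you keep the extracted sequence bounded from the outset and avoid the split into the cases $\limsup_k\norm{(z_k^*,x_k^*)}<\infty$ versus $=\infty$ that the argument in \cite{GfrOut22a} (and the authors' suppressed proof) performs; the price is the final dichotomy on whether the limiting $y^*$-component vanishes, which you handle cleanly. Either organisation works and yields the same conclusion.
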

\if{
\begin{proof}
  If $\gamma:={\rm scd\, reg\,}F(x,y,z)=\infty$ there is nothing to prove.  Thus we can suppose that  $\gamma< \infty$ and consequently Lemma \ref{LemEquivReg}(i) is fulfilled by the equivalences stated in this lemma. Assume now on the contrary that the inequality \eqref{EqOSCReg} does not hold  and we can find  sequences $(x_k,y_k,z_k)\to (x,y,z)$ and $(z_k^*,x_k^*,y_k^*)\in L_k^*\in\Sp^*F(x_k,y_k,z_k)$ with $\norm{y_k}^*\leq 1$ and $\limsup_{k\to\infty}\norm{(z^*_k,x^*_k)}=:\bar\gamma>\gamma$. If $\bar\gamma$ is finite, by possibly passing to a subsequence we can assume that $(z_k^*,x_k^*,y_k^*)$ converges to some $(\tilde z^*,\tilde x^*,\tilde y^*)$ with $\bar\gamma=\norm{(\tilde z^*,\tilde x^*)}$.  By possibly passing to a subsequence once more we can assume that $L_k^*$ converges to some $L^*\in \Sp^*F(x,y,z)$. Since $(\tilde z^*,\tilde x^*,\tilde y^*)\in L^*$ and $\norm{\tilde y^*}\leq 1$, we obtain the contradiction $\gamma<\bar\gamma=\norm{(\tilde z^*,\tilde x^*)}\leq \kappa(L^*)\leq\gamma$. If $\bar\gamma=\infty$ we can employ the same arguments already used in the proof of Lemma \ref{LemEquivReg} to obtain a contradiction.
\end{proof}
}\fi
The next statement pertains to the $\Psi$-ss property of a single-valued continuous selection of \eqref{EqSolMap} in the SCD case.
\begin{theorem}
  Consider a mapping $F:\R^n\times\R^m\tto\R^m$ and  a point $(\xb,\yb)$ with $0\in F(\xb,\yb)$.  Assume that $F$ has the SCD property around $(\xb,\yb,0)$ and ${\rm scd\,reg\,}F(\xb,\yb,0)<\infty$. Further suppose that there exists a  neighborhood $\Omega$ of $\xb$ and a continuous function $\sigma:\Omega\to\R^m$ with $\sigma(\xb)=\yb$ and $0\in F(x,\sigma(x))$, $x\in\Omega$. Then there exists an open neighborhood $U\subset \Omega$ of $\xb$ such that for every $x\in U$ there holds ${\rm scd\,reg\,}F(x,\sigma(x),0)<\infty$ and $\sigma$ is semismooth at $x$ with respect to the  mapping $\Psi:\R^n\tto\R^{m\times n}$ given by
  \[\Psi(x):=\{-X_{L^*}^T\mv L^*\in \Sp^*F(x,\sigma(x),0)\},\ x\in \dom\Psi:=U,  \]
  provided $F$ is SCD semismooth$^*$ at $(x,\sigma(x),0)$.
\end{theorem}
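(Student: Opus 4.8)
The plan is to follow the pattern of Theorem \ref{ThSolMapSSLip}, but to replace the use of the Mordukhovich criterion by the subspace representation of regular dual subspaces from Proposition \ref{PropRegSubsp}. First I would set $\gamma:={\rm scd\,reg\,}F(\xb,\yb,0)+1<\infty$ and invoke Proposition \ref{PropSCDRegOSC} together with the SCD property of $F$ around $(\xb,\yb,0)$ and the continuity of $\sigma$ (with $\sigma(\xb)=\yb$) to obtain an open neighborhood $U\subset\Omega$ of $\xb$ such that, for every $x\in U$, the mapping $F$ has the SCD property around $(x,\sigma(x),0)$ and ${\rm scd\,reg\,}F(x,\sigma(x),0)\le\gamma$. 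By the lemma stated just before Proposition \ref{PropSCDRegOSC}, every $L^*\in\Sp^*F(x,\sigma(x),0)$ then belongs to $\Z_{m,n+m}^{\rm reg}$, hence by Proposition \ref{PropRegSubsp} admits the representation $L^*=\rge(Z_{L^*},X_{L^*},I)$ with $\kappa(L^*)=\bnorm{\myvec{Z_{L^*}\\X_{L^*}}}\le\gamma$. In particular $\Psi$ is well defined on $U$, and $\norm{A}=\norm{X_{L^*}}\le\kappa(L^*)\le\gamma$ for every $A\in\Psi(x)$, so $\Psi$ is locally bounded on $U$ and $\dom\Psi=U$ is a neighborhood of each of its points. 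This settles all requirements in Definition \ref{DefSS_Psi} except the limit \eqref{EqGsemismooth}.

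Next I would fix $x_0\in U$ at which $F$ is SCD-ss$^*$ and establish \eqref{EqGsemismooth} for $\sigma$ and $\Psi$ at $x_0$. Given $\epsilon'\in(0,1]$, put $\epsilon:=\epsilon'/\big(\sqrt{\gamma^2+1}\,(2\gamma+2)\big)$. By the SCD-ss$^*$ property of $F$ at $(x_0,\sigma(x_0),0)$, applied with $\Gamma=\bigcup\Sp^*F$, there is $\delta>0$ such that
\[\big|\skalp{(x^*,y^*),(x,y)-(x_0,\sigma(x_0))}-\skalp{z^*,z}\big|\le\epsilon\norm{(x,y,z)-(x_0,\sigma(x_0),0)}\norm{(z^*,x^*,y^*)}\]
for all $(x,y,z)\in\gph F\cap\B_\delta(x_0,\sigma(x_0),0)$ and all $(z^*,x^*,y^*)\in(\bigcup\Sp^*F)(x,y,z)$; by continuity of $\sigma$ I would pick $\delta'>0$ with $\B_{\delta'}(x_0)\subset U$ and $\norm{(x-x_0,\sigma(x)-\sigma(x_0))}<\delta$ for $x\in\B_{\delta'}(x_0)$. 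Fix such an $x\ne x_0$ and $A=-X_{L^*}^T\in\Psi(x)$. Inserting the point $(x,\sigma(x),0)\in\gph F$ and, for an arbitrary $p\in\R^m$, the element $(Z_{L^*}p,X_{L^*}p,p)\in L^*\subset(\bigcup\Sp^*F)(x,\sigma(x),0)$ into the above estimate, and using $\skalp{X_{L^*}p,x-x_0}=-\skalp{p,A(x-x_0)}$, I obtain $\big|\skalp{p,\sigma(x)-\sigma(x_0)-A(x-x_0)}\big|\le\epsilon\norm{(x-x_0,\sigma(x)-\sigma(x_0))}\sqrt{\kappa(L^*)^2+1}\,\norm{p}$. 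Choosing $p:=\sigma(x)-\sigma(x_0)-A(x-x_0)$ and dividing by $\norm{p}$ yields the self-referential bound $\norm{\sigma(x)-\sigma(x_0)-A(x-x_0)}\le\epsilon\sqrt{\gamma^2+1}\,\norm{(x-x_0,\sigma(x)-\sigma(x_0))}$, which I will refer to as $(\star)$.

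Finally I would use $(\star)$ to bootstrap local calmness of $\sigma$ at $x_0$: from $\norm{A}\le\gamma$ and $\epsilon\sqrt{\gamma^2+1}=\epsilon'/(2\gamma+2)\le\tfrac12$ one gets $\norm{\sigma(x)-\sigma(x_0)}\le\tfrac12\norm{x-x_0}+\tfrac12\norm{\sigma(x)-\sigma(x_0)}+\gamma\norm{x-x_0}$, hence $\norm{\sigma(x)-\sigma(x_0)}\le(2\gamma+1)\norm{x-x_0}$ and thus $\norm{(x-x_0,\sigma(x)-\sigma(x_0))}\le(2\gamma+2)\norm{x-x_0}$; substituting this back into $(\star)$ gives $\norm{\sigma(x)-\sigma(x_0)-A(x-x_0)}\le\epsilon'\norm{x-x_0}$ uniformly over $A\in\Psi(x)$ for every $x\in\B_{\delta'}(x_0)\setminus\{x_0\}$. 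Since $\epsilon'\in(0,1]$ was arbitrary, \eqref{EqGsemismooth} holds and $\sigma$ is $\Psi$-ss at $x_0$; combined with the first paragraph this also gives ${\rm scd\,reg\,}F(x,\sigma(x),0)<\infty$ on $U$. I expect the main obstacle to be precisely this last step: unlike in Theorem \ref{ThSolMapSSLip}, Lipschitz continuity of $\sigma$ is not available a priori (only continuity), so $\norm{(x-x_0,\sigma(x)-\sigma(x_0))}$ cannot be bounded by $\OO(\norm{x-x_0})$ from the outset; the device that circumvents this is the inequality $(\star)$, which — once $\epsilon$ is chosen small relative to $\gamma$ — yields local calmness of $\sigma$ and the semismoothness estimate simultaneously.
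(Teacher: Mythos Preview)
Your proof is correct and follows essentially the same route as the paper: invoke Proposition \ref{PropSCDRegOSC} and continuity of $\sigma$ to propagate the bound on $\scdreg F$ to a neighborhood, use Proposition \ref{PropRegSubsp} to represent each $L^*$ as $\rge(Z_{L^*},X_{L^*},I)$, plug $(Z_{L^*}p,X_{L^*}p,p)$ into the SCD-ss$^*$ inequality at $(x,\sigma(x),0)$, and then bootstrap calmness of $\sigma$ from the resulting self-referential estimate before substituting back. The only cosmetic difference is that you fix a single uniform bound $\gamma$ on all of $U$ at the outset, whereas the paper re-applies Proposition \ref{PropSCDRegOSC} locally at each $x$ to get a point-dependent $\gamma$; your version is slightly cleaner and avoids a second invocation of upper semicontinuity.
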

\begin{proof}
  Due to the definition of the SCD property around a point and Proposition \ref{PropSCDRegOSC} we can find an open neighborhood $U'\times V'\times W'$ of $(\xb,\yb,0)$ such that for every $(x,y,z)\in U'\times V'\times W'$ the mapping $F$ has the SCD property around $(x,y,z)$ and ${\rm scd\,reg\,}F(x,y,z)<\infty$. Next consider the open neighborhood $U'':=\{x\in\Omega\cap U'\mv \sigma(x)\in V'\}$ and choose an open neighborhood $U$ of $\xb$ satisfying $\cl U\subset U''$.  This choice of $U$ ensures that ${\rm scd\,reg\,}F(x,\sigma(x),0)<\infty$, $x\in U$. We now show that $\Psi$ is locally bounded at $x\in U$. Indeed, we have $\gamma:={\rm scd\,reg\,}F(x,\sigma(x),0)<\infty$ and, due to the continuity of $\sigma$ and Proposition \ref{PropSCDRegOSC}, we can find a neighborhood $\tilde U\subset \Omega$ of $x$ satisfying ${\rm scd\,reg\,}F(x',\sigma(x'),0)<\gamma+1$ $\forall x'\in\tilde U$. Hence, $\norm{X^*}<\gamma+1$ for every $X^*\in\Psi(x')$ and our claim is established. In order to show the last assertion, consider $x\in U$ such that $F$ is SCD semismooth$^*$ at $(x,\sigma(x),0)$, set $\gamma:={\rm scd\,reg\,}F(x,\sigma(x),0)$ and choose $\tilde \delta>0$ such that ${\rm scd\,reg\,}F(x',\sigma(x'),0)<\gamma+1$ $\forall x'\in\B_{\tilde \delta}(x)$.  By definition of the SCD semismooth$^*$ property we can find for every $\epsilon>0$ some  $\delta\in (0,\tilde\delta)$ such that for every $x'\in \B_{\delta}(x)$, every subspace $L^*\in \Sp^*F(x',\sigma(x'),0)$ and every $p\in\R^m$ we have
  \begin{align*}\vert \skalp{Z_{L^*}p, 0-0}-\skalp{X_{L^*}p,x'-x}&-\skalp{p,\sigma(x')-\sigma(x)}\vert=\vert \skalp{p,X_{L^*}^T(x'-x)+ \sigma(x')-\sigma(x)}\vert\\
  &\leq \epsilon\norm{(x'-x,\sigma(x')-\sigma(x),0-0)}\norm{(Z_{L^*}p,X_{L^*}p,p)}\\
  &\leq   \epsilon(\norm{x'-x}+\norm{\sigma(x')-\sigma(x)})\sqrt{1+\kappa(L^*)^2}\norm{p}\end{align*}
  and $\kappa(L^*)\leq \gamma+1$. This shows that
  \[\norm{X_{L^*}^T(x'-x)+ \sigma(x')-\sigma(x)}\leq \epsilon(\norm{x'-x}+\norm{\sigma(x')-\sigma(x)})\gamma'\]
  with $\gamma'=\sqrt{1+(1+\gamma)^2}$. Thus $(1-\epsilon \gamma')\norm{\sigma(x')-\sigma(x)}\leq \norm{x'-x}(\norm{X_{L^*}^T}+\epsilon \gamma')$
  implying $\norm{\sigma(x')-\sigma(x)}\leq 2\norm{x'-x}((\gamma+1)+ 1/2)$
  whenever $\epsilon \in (0,1/(2\gamma'))$. Hence
  \[\norm{X_{L^*}^T(x'-x)+ \sigma(x')-\sigma(x)}\leq \epsilon\norm{x'-x}\gamma'(2\gamma +4),\]
  verifying the $\Psi$-ss property of $\sigma$  at $x$. This completes the proof.
\end{proof}

\begin{corollary}\label{CorSCDImp}
  Consider a mapping $F:\R^n\times\R^m\tto\R^m$ and a continuous mapping $\sigma:\Omega\to\R^m$, $\Omega\subset \R^n$ open, satisfying $0\in F(x,\sigma(x))$, $x\in\Omega$. Further assume that for every $x\in\Omega$ the mapping $F$ is SCD semismooth$^*$ at $(x,\sigma(x),0)$ and $\scdreg F(x,\sigma(x),0)<\infty$. Then $\sigma$ is semismooth on $\Omega$ with respect to the mapping $\Psi:\R^n\tto\R^{m\times n}$ given by
  \[\Psi(x):=\{-X_{L^*}^T\mv L^*\in \Sp^*F(x,\sigma(x),0)\},\ x\in \dom\Psi:=\Omega.\]
  In particular, for every  function $\varphi:D\to \R$ defined on an open set $D\subset\R^n\times\R^m$ containing $\gph \sigma$, which is semismooth on $\gph\sigma$ with respect to some mapping $\Phi:\R^n\times\R^m\tto\R^{1\times(n+m)}$, there holds  that the function $\theta:\Omega\to \R$, $\theta(x):=\varphi(x,\sigma(x))$ is semismooth on $\Omega$ with respect to the mapping $\Theta:\R^n\tto\R^{1\times n}$ given by
  \begin{equation*}
  \Theta(x):=
    \{g_x^T -g_y^T X_{L^*}^T\mv (g_x^T,g_y^T)\in\Phi(x,\sigma(x)), L^*\in\Sp^* F(x,\sigma(x),0)\},\ x\in\dom\Theta:=\Omega.
  \end{equation*}
\end{corollary}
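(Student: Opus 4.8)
The plan is to reduce both assertions to results already available, applied pointwise over $\Omega$ and then glued together.

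For the statement about $\sigma$, I would apply the theorem immediately preceding this corollary at every point $x_0\in\Omega$. Fix such an $x_0$. By assumption $F$ is SCD semismooth$^*$ at $(x_0,\sigma(x_0),0)$, hence $F$ has the SCD property around $(x_0,\sigma(x_0),0)$ (as noted right after the definition of the SCD-ss$^*$ property), and $\scdreg F(x_0,\sigma(x_0),0)<\infty$ by hypothesis; moreover $\sigma$ is a continuous selection of the solution map on the neighborhood $\Omega$ of $x_0$. These are precisely the hypotheses of the preceding theorem with $\xb$ replaced by $x_0$, so it produces an open neighborhood $U_{x_0}\subset\Omega$ of $x_0$ on which $\scdreg F(\cdot,\sigma(\cdot),0)$ is finite, on which the mapping $\Psi$ of the corollary (whose values agree with those in the theorem) is locally bounded, and such that $\sigma$ is semismooth at $x_0$ with respect to $\Psi$. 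Since $x_0\in\Omega$ was arbitrary and $\dom\Psi=\Omega$, this shows that $\Psi$ is locally bounded throughout $\Omega$ and that $\sigma$ is $\Psi$-ss at every point of $\Omega$, i.e., semismooth on $\Omega$ with respect to $\Psi$.

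For the statement about $\theta$, I would invoke the chain rule (Lemma~\ref{LemChainRule}) pointwise. Put $G:\Omega\to\R^n\times\R^m$, $G(x):=(x,\sigma(x))$, and define $\Psi_1:\R^n\tto\R^{(n+m)\times n}$, $\dom\Psi_1:=\Omega$, by $\Psi_1(x):=\{\myvec{I_n\\A}\mv A\in\Psi(x)\}$. Since $\norm{G(x')-G(x)-\myvec{I_n\\A}(x'-x)}=\norm{\sigma(x')-\sigma(x)-A(x'-x)}$ for every $A\in\Psi(x')$, the semismoothness of $\sigma$ with respect to $\Psi$ proved above immediately gives that $G$ is semismooth on $\Omega$ with respect to $\Psi_1$ (which is locally bounded because $\Psi$ is). Now fix $x\in\Omega$. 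As $G(\Omega)=\gph\sigma\subset D$ and $\varphi$ is semismooth at $G(x)=(x,\sigma(x))$ with respect to $\Phi$, Lemma~\ref{LemChainRule} applied with $F_1=G$, $F_2=\varphi$ shows that $\theta=\varphi\circ G$ is semismooth at $x$ with respect to the mapping whose value at $u$ consists of all products $C\,B$ with $C\in\Phi(u,\sigma(u))$ a $1\times(n+m)$ row and $B\in\Psi_1(u)$. Writing $C=(g_x^T,g_y^T)$, $g_x\in\R^n$, $g_y\in\R^m$, and $B=\myvec{I_n\\A}$ with $A=-X_{L^*}^T$, $L^*\in\Sp^* F(x,\sigma(x),0)$ (the representation from Proposition~\ref{PropRegSubsp}), one computes $C\,B=g_x^T+g_y^TA=g_x^T-g_y^TX_{L^*}^T$, which is exactly the description of $\Theta(x)$ in the statement. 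Since $x$ was arbitrary, $\theta$ is semismooth on $\Omega$ with respect to $\Theta$.

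I do not expect a real obstacle here: the analytic substance — the Lipschitz-type estimates, the outer semicontinuity of $\scdreg F$ from Proposition~\ref{PropSCDRegOSC}, and the SCD-ss$^*$ inequality from the definition — is already packaged in the preceding theorem, and the corollary is just a pointwise application of it together with the chain rule. The only step demanding some care is the bookkeeping of matrix blocks in the chain-rule computation, namely that the outer derivative is the row pair $(g_x^T,g_y^T)$, that $\Psi$ is described through $-X_{L^*}^T$, and that the block product $(g_x^T,g_y^T)\myvec{I_n\\A}$ collapses to $g_x^T-g_y^TX_{L^*}^T$.
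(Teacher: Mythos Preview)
Your proposal is correct and follows exactly the route the paper leaves implicit: the first assertion is the preceding theorem applied at each point of $\Omega$ (using that the SCD-ss$^*$ hypothesis at every $x\in\Omega$ supplies both the SCD property around $(x,\sigma(x),0)$ and the provisional clause of that theorem), and the second assertion is the chain rule of Lemma~\ref{LemChainRule} applied to $\theta=\varphi\circ G$ with $G(x)=(x,\sigma(x))$. The only cosmetic slip is that in the chain-rule paragraph you write $L^*\in\Sp^*F(x,\sigma(x),0)$ while describing the value of the composite derivative at a nearby point $u$; it should read $L^*\in\Sp^*F(u,\sigma(u),0)$, which then matches $\Theta(u)$ verbatim.
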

Note that both in Corollary \ref{CorSS*Imp} and Corollary \ref{CorSCDImp} the $\Psi$-semismoothness of $\sigma$ on $\Omega$ implies that $\sigma$ is locally Lipschitz and almost everywhere differentiable on $\Omega$.
  However, in Corollary \ref{CorSS*Imp} we require that $\sigma$ is some continuous localization of the solution map $S$, whereas in Corollary \ref{CorSCDImp} the mapping $\sigma$ only needs to be any continuous {\em selection} of $S$. In case of a single-valued mapping $F$, a similar result has already been stated in \cite{Kr18}. However, since most implicit function  theorems (see, e.g., \cite{Gow04,MeSuZh05,PaSuSu03}) guarantee existence  of a
locally unique implicit function, it is clear that the assumptions of such theorems cannot
be satisfied in situations where $\sigma$ is not locally unique.

  To illustrate Corollary \ref{CorSCDImp} consider  the following academic example.
\begin{example}
  Consider the functions $f_1,f_2:\R\times\R\to\R$ and the mapping $F:\R\times\R\tto\R$ given by
  \begin{equation}\label{EqEx_f}f_1(x,y):=\frac 12 y^2-xy,\ f_2:=-\frac 12 y^2,\ F(x,y):=\partial^c_y f(x,y)\ \mbox{with}\  f(x,y):=\max\{f_1(x,y),f_2(x,y)\}.\end{equation}
  Note that the respective solution mapping $S(x)=\{y\mv 0\in F(x,y)\}$ amounts to the {\em C-stationary-point mapping} of the nonsmooth program $\min_y f(x,y)$ parameterized by $x$.
  For every $x\in\R$ we have
  \[f(x,y)=\begin{cases}f_2(x,y)&\mbox{for $y\in[x^-,x^+]$},\\
  f_1(x,y)&\mbox{for $y\not\in(x^-,x^+)$,}\end{cases}\]
  where $x^-:=\min\{x,0\}$ and $x^+:=\max\{x,0\}$. Thus, for any $x\in\R$ we obtain
  \begin{align*}\gph\partial^c_y f(x,\cdot)&= \{(y,y-x)\mv y\not\in[x^-,x^+]\}\cup\{(y,-y)\mv y\in(x^-,x^+)\}\\
  &\qquad\cup(\{x^-\}\times[x^- -x,-x^-])\cup (\{x^+\}\times(-x^+,x^+ -x))\end{align*}
  and we see that $S(x)=\{x,0\}$ does not possess a single-valued localization around $(0,0)$. Let us consider the continuous selection $\sigma(x)=\{x\}=\argmin_yf(x,y)$. We compute now the SC derivative $\Sp F(x,x,0)$ for $x\in\R$. By the above representation of $\gph \partial^c_y f(x,\cdot)$ it follows that $\OO_F$ is the union of the four sets
  \begin{align*}O_1:=\{(x,y,y-x)\mv x\in\R, y\not\in[x^-,x^+]\},\ O_2:=\{(x, y,-y)\mv x\not=0,y\in(x^-,x^+)\}\\
  O_3:=\{(x,x,z^*)\mv x\not=0, z^*\in (x^- -x,x^+ -x)\},\ O_4:=\{(x,0,z^*)\mv x\not=0,z^*\in(-x^+,-x^-)\}.
  \end{align*}
  Moreover, defining the four subspaces
  \begin{align*}L_1:=\{(u,v,v-u)\mv (u,v)\in\R^2\},\ L_2:=\{(u,v,-v)\mv (u,v)\in\R^2\},\\
   L_3:=\{(u,u,v)\mv(u,v)\in\R^2\},\ L_4:=\{(u,0,v)\mv(u,v)\in\R^2\},
  \end{align*}
  we have that $T_{\gph F}(x,y,z^*)=L_k$ for all $(x,y,z^*)\in O_k$ and $k=1,\ldots,4$. Hence, $\Sp F(x,\sigma(x),0)=\{L_1,L_3\}$ for $x\not=0$ and $\Sp F(0,0,0)=\{L_1,L_2,L_3,L_4\}$.
  All the adjoint subspaces
  \[L_1^*=\rge(1,-1,1),\ L_2^*=\rge(-1,0,1),\ L_3^*=\rge(0,-1,1),\ L_4^*=\rge(0,0,1)\]
  belong to $\Z_{1,2}^{\rm reg}$ and thus  we can conclude from Corollary \ref{CorSCDImp} that $\sigma$ is semismooth with respect to
  \begin{equation}\label{EqExamplePsi}\Psi(x):=\begin{cases}\{1\}&\mbox{if $x\not=0$,}\\\{1,0\}&\mbox{if $x=0$.}\end{cases}\end{equation}
  Note that in accordance with Theorem \ref{ThNorkinR_m}(iii) the mapping $\Psi$ is almost everywhere single-valued, although $\Sp F(x,\sigma(x),0)$ does not have this property for any $x$.

These results enable us to apply the considered ImP approach to the following nonsmooth bilevel program
\[\min_{(x,y)\in\R^2\times\R} \varphi(x,y):=2\big\vert y -\vert x_1\vert\big\vert - \vert x_2\vert +\frac 12 x_1^2\quad\mbox{subject to}\quad y\in \argmin f(\eta(x),\cdot),\]
where $f$ is given by \eqref{EqEx_f} and the {\em control} variable $x$ enters now the lower-level problem via the nonsmooth function $\eta:\R\times\R\to\R$ given by $\eta(x)=\vert x_1\vert -\vert x_2\vert$. The solution of this bilevel program is $\xb=(0,0)$, $\yb=0$ and we solved it by applying the BT algorithm to the (reduced)
problem
\[\min \theta(x):=\varphi(x,\sigma(\eta(x))),\]
where $\sigma(\xi):=\argmin_y f(\xi,y)$ is evaluated by solving the nonlinear program
\[\min_{z,y} z\quad\mbox{subject to}\quad z\geq f_1(\xi,y),\ z\geq f_2(\xi,y)\]
by means of the MATLAB routine {\tt fmincon}, which was able to compute in every case the correct global solution.
\if{
Since $\vert \cdot\vert$ and $\sigma$ are semismooth with respect to the mappings
\[t\mapsto\sign(t):=\begin{cases}1&\mbox{if $t\geq0$,}\\-1&\mbox{for $t<0$,}\end{cases}
\quad
\xi\mapsto\tilde\Psi(\xi):=\begin{cases}1&\mbox{if $\xi\not=0$,}\\0&\mbox{if $\xi=0$,}\end{cases}\]
cf. \eqref{EqExamplePsi}, by means of Lemma \ref{LemChainRule} and Corollary \ref{CorSCDImp}
we obtain that $\theta$ is semismooth with respect to the mapping
\[(x_1,x_2)\mapsto\big(2\sign(u(x))(\tilde\Psi(\eta(x))-1)\sign(x_1)+x_1,
-2\sign(u(x))\tilde\Psi(\eta(x))\sign(x_2)-\sign(x_2)\big)
,\]
where $u(x):=\sigma(\eta(x))-\vert x_1\vert$.
}\fi
Since $\vert \cdot\vert$ is G-semismooth, i.e., $\partial\vert\cdot\vert$-ss,  and $\sigma$ is $\Psi$-ss with $\Psi$ given by \eqref{EqExamplePsi}, these functions are also semismooth with respect to the selections
\[t\mapsto\sign(t):=\begin{cases}1&\mbox{if $t\geq0$,}\\-1&\mbox{for $t<0$,}\end{cases}
\quad
\xi\mapsto\tilde\Psi(\xi):=\begin{cases}1&\mbox{if $\xi\not=0$,}\\0&\mbox{if $\xi=0$}\end{cases}\]
of $\partial\vert\cdot\vert$ and $\Psi$, respectiveley.
By means of Lemma \ref{LemChainRule} and Corollary \ref{CorSCDImp}
we obtain that $\theta$ is semismooth with respect to the mapping
\[(x_1,x_2)\mapsto\big(2\sign(u(x))(\tilde\Psi(\eta(x))-1)\sign(x_1)+x_1,
-2\sign(u(x))\tilde\Psi(\eta(x))\sign(x_2)-\sign(x_2)\big)
,\]
where $u(x):=\sigma(\eta(x))-\vert x_1\vert$.
We did not make any attempt to compute exact Clarke subgradients of $\theta$ because, according to our theory, elements of a suitable semismooth derivative may replace them very well. Indeed, starting from $\ee x0=(5,-1)$, the BT algorithm stopped after $16$ Iterations and $21$ evaluations of the oracle at the point $(-2.8\cdot 10^{-6},1.8\cdot 10^{-8})$. Other starting points yield similar results.
\end{example}
\section{Conclusion}
The paper contains a deep analysis of some generalizations of the classical semi\-smoothness property. Namely, it deals with the so-called $\Psi$-semismoothness and new notions of $\Gamma$-semismoothness$^*$ and SCD-semismoothness$^*$. These notions are closely related to each other and have a number of useful properties concerning, in particular, some fundamental numerical approaches to nonsmooth problems. For instance, whereas the standard rules of  generalized differential calculus attain typically the form of inclusions, the calculus of semismooth derivatives yields again a semismooth derivative. This is especially helpful in connection with a family of bundle methods in nonsmooth optimization.

A special attention has been paid to the semismoothness of localizations and selections of solution maps to parameter-dependent inclusions and their possible compositions with real-valued objectives. The presented analysis of these mappings paves the way to an efficient extension of the ImP approach to a broader class of problems with equilibrium constraints.

\section*{Acknowledgment} The authors would like to thank both reviewers for their numerous helpful comments improving the quality of the paper.


\begin{thebibliography}{99}
%
\if{
\bibitem{AubFra90}
{\sc J.~P. Aubin, H. Frankowska}, {\em Set-Valued Analysis}, Birkh\"auser, Boston, 1990.
}\fi
\bibitem{Bih84}
{\sc A. Bihain}, {\em Optimization of upper semidifferentiable functions}, J. Optim. Theory Appl., 44 (1984), pp.~545--568.
%
\bibitem{BolPau21}
{\sc J. Bolte, E. Pauwels}, {\em Conservative set valued fields, automatic differentiation,
stochastic gradient methods and deep learning}, Math. Program. 188 (2021), pp.~19--51.
%
\bibitem{BoLePauSi21}
{\sc J. Bolte, T. Le, E. Pauwels, T. Silveti-Falls}, {\em Nonsmooth implicit differentiation for machine-learning and optimization}, in M. Ranzato, A. Beygelzimer, Y. Dauphin, P.~S. Liang,  J. Wortman Vaughan (eds.): Advances in Neural Information Processing Systems 34 (NeurIPS 2021).
%
\bibitem{BoPauSi24}
{\sc J. Bolte, T. Le, E. Pauwels, T. Silveti-Falls}, {\em Differentiating nonsmooth solutions to parametric monotone inclusion problems}, SIAM J. Optim. 34 (2024), pp.~71--97.
%
\bibitem{ChSuZh25}
{\sc L. Chen, D. Sun,  W. Zhang}, {\em Two typical implementable semismooth$^*$ Newton methods for generalized equations are G-semismooth Newton methods}, Math. Oper. Res., \url{https://doi.org/10.1287/moor.2024.0617}.
%
\bibitem{Cla83}
{\sc F.~H. Clarke}, {\em Optimization and nonsmooth analysis}, John Wiley \& Sons, New York, Chichester, Brisbane, Toronto, Singapore, 1983.
%
\bibitem{DavDru22}
{\sc D. Davis, D. Drusvyatskiy}, {\em Conservative and semismooth derivatives
are equivalent for semialgebraic maps}, Set-Valued Var. Anal. 30 (2022), pp.~453--463.
%
  \bibitem{DeBa01}
{\sc S. Dempe, J.~F. Bard}, {\em Bundle trust-region algorithm for bilinear bilevel programming}, J. Optim. Theory Appl., 110 (2001), pp.~265--288.
%
%
\bibitem{Gfr24}
{\sc H. Gfrerer}, {\em On a globally convergent semismooth$^*$ Newton method in nonsmooth nonconvex optimization}, Comput. Optim. Appl. 91 (2025), pp.~67--124.
%
\bibitem{GfrHubRam25}
{\sc H. Gfrerer, S. Hubmer, R. Ramlau}, {\em On SCD Semismooth* Newton methods for the efficient minimization of Tikhonov functionals with non-smooth and non-convex penalties}, Inverse Problems 41 (2025), 075002 (31 pp).
%
\bibitem{GfrOut21}
{\sc H. Gfrerer, J.~V. Outrata}, {\em On a semismooth$^*$ Newton method for solving generalized equations}, SIAM J. Optim. 31 (2021), pp.~489--517.
%
%
\bibitem{GfrOut22a}
{\sc H. Gfrerer, J.~V. Outrata}, {\em On (local) analysis of multifunctions via subspaces contained
in graphs of generalized derivatives}, J. Math. Anal. Appl. 508 (2022), 125895 (37 pp).
%
\bibitem{GfrOut23}
{\sc H. Gfrerer, J.V. Outrata}, {\em On the isolated calmness property of implicitly defined multifunctions},  J. Convex Anal. 30 (2023), pp.~1001--1023.
%
\bibitem{GfrOutVal23}
{\sc H. Gfrerer, J.~V. Outrata, J.Valdman}, {\em On the application of the SCD semismooth$^*$ Newton method to variational inequalities of the second kind}, Set-Valued Var. Anal. (2023), pp.~1001--1023.
%
\bibitem{GfrMaOutVal23}
{\sc H. Gfrerer, M. Mandlmayr, J.V. Outrata, J.Valdman}, {\em On the SCD semismooth$^*$ Newton method for generalized equations with application to a class of static contact problems with Coulomb friction}, Comput. Optim. Appl. 86 (2023), pp.~1159--1191.
%
\bibitem{Gow04}
{\sc M.~S. Gowda}, {\em  Inverse and implicit function theorems for H-differentiable and semismooth functions},
Optim. Methods Softw. 19  (2004), pp.~443--461.
%
\bibitem{GoRa00}
{\sc M.~S. Gowda, G. Ravindran}, {\em Algebraic univalence theorems for nonsmooth functions}, J. Math. Anal. Appl. 252 (2000), pp.~917--935.
%
\bibitem{HaSa23}
{\sc N.~T.~V. Hang, E. Sarabi}, {\em Smoothness of subgradient mappings and its applications in
parametric optimization}, arXiv:2311.06026 (2023).
%
\if{
\bibitem{HenOut05}
{\sc R. Henrion, J.~V. Outrata}, {\em Calmness of constraint systems with applications}, Math. Program. 104 (2005), pp.~437--464.
}\fi
%
\bibitem{Ki}{\sc K.~C. Kiwiel}, {\em Methods of Descent for Nondifferentiable Optimization}, Springer, Berlin, 1985.
%
\bibitem{KlKu02}
{\sc D. Klatte, B. Kummer}, {\em Nonsmooth equations in optimization. Regularity, calculus, methods and
applications}, Nonconvex Optimization and its Applications 60, Kluwer Academic Publishers, Dordrecht,
Boston, London, 2002.
%
\bibitem{Kr18}
{\sc F. Kruse}, {\em Semismooth implicit functions}, J. Convex Anal. 25 (2018), pp.~595--622.
%
\bibitem{Kum}
{\sc B. Kummer}, {\em Newton's method for non-differentiable functions}, in Advances in Mathematical
Optimization, J. Guddat et al, eds., Series in Mathematical Research, Vol.45, pp.~114--125.
Akademie-Verlag, Berlin, 1988.
%
\bibitem{Le}{\sc C. Lemarechal, M.~B. Imbert}, {\em Le Module M1FC1}, Technical Report, INRIA, Le Chesnay.
%
\bibitem{LuPaRa97}
 {\sc Z.-Q. Luo, J.-S. Pang, D. Ralph}, {\em Mathematical Programs with Equilibrium
Constraints,} Cambridge University Press, Cambridge,  1997.
%
\bibitem{MeSuZh05}
{\sc F. Meng, D. Sun, G. Zhao}, {\em Semismoothness of solutions to generalized equations
and the Moreau-Yosida regularization}, Math. Program. 104 (2005), pp.~561--581.
%
\bibitem{Mif77}
{\sc R. Mifflin}, {\em  Semismooth and semiconvex functions in constrained
optimization}, SIAM J. Control Optim. 15 (1977), pp.~959--972.
%
\bibitem{MiGuNo87}
{\sc V.~S. Mikhalevich, A.~M. Gupal, V.~I. Norkin}, {\em Methods of Nonconvex Optimization}, Nauka, Moscow, Russia, 1987 (in Russian).
%
\bibitem{Mo06a}
{\sc B.~S. Morkukhovich}, {\em Variational Analysis and Generalized Differentiation I: Basic Theory}, Springer, Berlin, 2006.
%

\bibitem{No78}
{\sc V.~I. Norkin}, {\em Nonlocal minimization algorithms for nondifferentiable functions}, Kibernetika, No. 5 (1978), pp.~57--60.

%
\bibitem{No80}
{\sc V.~I. Norkin}, {\em Generalized-differentiable functions}, Kibernetika, No. 1(1980), pp.~9--11.
%
\bibitem{PaSuSu03}
{\sc J.-S. Pang, D. Sun, J. Sun}, {\em Semismooth homeomorphisms and strong stability of
semidefinite and Lorentz complementarity problems},  Math. Oper. Res., 28 (2003), pp.~39--63.
%
\bibitem{PaBe13}
{\sc P.~Patrinos, A,~Bemporad}, {\em  Proximal Newton methods for convex composite optimization}, in Proceedings of the 52nd IEEE Conference on Decision and Control, IEEE Press,
Piscataway, NJ, 2013, pp.~2358--2363.
%
\bibitem{PolRo96}{\sc R. A. Poliquin and R. T. Rockafellar}, {\em Prox-regular functions in variational analysis}, Trans. Amer. Math. Soc., 348 (1996), pp.~1805--1838.
%
\bibitem{Qi96}
{\sc  L. Qi}, {\em C-Differential operators, C-differentiability and generalized Newton methods}, Applied
Mathematics Report, AMR96/5, School of Mathematics, The University of New South Wales, Sydney, Australia, 1996.
%
\bibitem{QiSun93}
{\sc L. Qi, J. Sun}, {\em A nonsmooth version of Newton's method}, Math. Program. 58 (1993),
pp.~353--367.
%
\if{
\bibitem{Ro89}
{\sc R.~T. Rockafellar,} {\em Proto-differentiability of set-valued mappings and its applications in optimization}, in
Analyse Non Lin\'eaire, H. Attouch et al., eds., Gauthier-Villars, Paris, 1989, pp. 449--482.
}\fi
%
\bibitem{RoWe98}
{\sc R.~T. Rockafellar, R.~J-B. Wets}, {\em Variational Analysis}, Springer, Berlin, 1998.
%
\bibitem{SZ}{\sc H. Schramm, J. Zowe}, {\em A version of the bundle idea for minimizing a nonsmooth function: conceptual idea, convergence analysis, numerical results}, SIAM J. Optim. 2(1992), pp.~121-152.
%
\bibitem{Sun01}
{\sc D. Sun}, {\em A further result on an implicit function theorem for locally Lipschitz
functions}, Oper. Res. Lett. 28 (2001), pp.~193--198.
%
\bibitem{Ulb11}
{\sc M. Ulbrich}, {\em Semismooth Newton methods for variational inequalities and constrained optimization problems in function spaces}, MOS-SIAM Series on Optimization,  SIAM,  Philadelphia (2011)
%
\bibitem{Wa81}
{\sc J. Warga}, {\em Fat homeomorphisms and unbounded derivative containers}, J. Math. Anal. Appl. 81 (1981), pp.~545--560.

\bibitem{Zo89}
{\sc J. Zowe}, {\em The BT-algorithm for minimizing a nonsmooth functional subject to linear constraints},in Nonsmooth Optimization and Related Topics, F.~H. Clarke, V.~F. Demyanov and F. Gianessi, eds., Plenum Press, New York, 1989, pp.~459--480.


\end{thebibliography}
\end{document}